\numberwithin{equation}{section}
\newcommand{\Ric}{{\rm Ric}}
\newcommand{\diam}{{\rm diam}}
\newcommand{\Alex}{\text{Alex\,}}
\newcommand{\Alexnk}{\text{Alex}^n(\kappa)}
\newcommand{\dN}{\mathds{N}}
\newcommand{\dR}{\mathds{R}}
\newcommand{\dS}{\mathds{S}}
\newcommand{\cH}{\mathcal{H}}
\newcommand{\cK}{\mathcal{K}}
\newcommand{\cKS}{\mathcal{KS}}
\newcommand{\cL}{\mathcal{L}}
\newcommand{\cS}{\mathcal{S}}
\newcommand{\cW}{\mathcal{W}}
\newcommand{\ang}[3]{\measuredangle\left({#1}\,_{#3}^{#2}\right)}
\newcommand{\tang}[3]{\tilde\measuredangle\left({#1}\,_{#3}^{#2}\right)}
\newcommand{\Vol}[1]{\text{Vol}\left(#1\right)}
\newcommand{\dsp}{\displaystyle}
\newtheorem*{Theorem A}{Theorem A}
\newtheorem*{Theorem B}{Theorem B}
\newtheorem*{Theorem C}{Theorem C}
\newtheorem*{Theorem D}{Theorem D}
\newtheorem*{Theorem E}{Theorem E}
\newtheorem{theorem}{Theorem}[section]
\newtheorem{lemma}[theorem]{Lemma}
\newtheorem{corollary}[theorem]{Corollary}
\newtheorem{conjecture}[theorem]{Conjecture}
\newtheorem{definition}[theorem]{Definition}
\newtheorem{example}[theorem]{Example}
\newtheorem{remark}[theorem]{Remark}
\def\smalloverbrace#1{\mathop{\vbox{\m@th\ialign{##\crcr\noalign{\kern3\p@}%
  \tiny\downbracefill\crcr\noalign{\kern3\p@\nointerlineskip}%
  $\hfil\displaystyle{#1}\hfil$\crcr}}}\limits}
\begin{document}

\title{Quantitative estimates on the $C^2$-singular sets in Alexandrov spaces}

\author[N.~Li]{Nan Li}
\address{N. Li, Department of Mathematics, The City University of New York - NYC College of
Technology, 300 Jay St., Brooklyn, NY 11201}
\email{NLi@citytech.cuny.edu}
\thanks{Nan Li was partially supported by PSC-CUNY \#65397-00 53.}

\maketitle


\begin{abstract}
 
The total disaster may be controllable if not preventable. We will explore this phenomenon for singularities in metric spaces. A point in an $n$-dimensional Alexandrov space is called regular if its tangent cone is isometric to $\mathds R^n$. Examples show that not every regular point is smooth, and the non-smooth points, away from the boundary, can have co-dimension 1. In this paper, we define a non-negative function $\mathcal K(x)$, which quantitatively measures the extent of the point $x$ from being $C^2$. The so-called $C^2$-singular points are identified as the set where $\mathcal K>0$. We show that $\int_{B_r(p)} \mathcal K(x)\, \operatorname d\mathcal H^{n-1}\le c(n,\kappa,\nu)r^{n-2}$ for any $n$-dimensional Alexandrov space $(X,p)$ with curv $\ge \kappa$ and $\operatorname{Vol}\left(B_1(p)\right)\ge\nu>0$. This leads to the Hausdorff dimension estimate $\dim_\mathcal H\{\mathcal K>0\}\le n-1$, and the quantitative Hausdorff measure estimate $\mathcal H^{n-1}\left(\{\mathcal K>\epsilon\}\cap B_r(p)\right)\le \epsilon^{-1}\cdot c(n,\nu)r^{n-2}$. These results also make progress on Naber's conjecture on the convergence of curvature measures. 

The measure $\mathcal K(x)\, \operatorname d\mathcal H^{n-1}$ on Alexandrov spaces can be viewed as the counterpart of the curvature measure $scal \,\operatorname d {vol}_{g}$ on smooth manifolds.  We also show that if $n$-dimensional Alexandrov spaces $X_i$ Gromov-Hausdorff converge to a smooth manifold with no boundary without collapsing, then $\mathcal K_i\, \operatorname d\mathcal H^{n-1}\to 0$ as a measure. 

\end{abstract}

\tableofcontents




\section{Introduction}\label{s:intro}

%

In 1982, S.T. Yau proposed the following conjecture on the concentration of curvatures.

\begin{conjecture}[S.T.~Yau \cite{Yau82}]\label{c:Yau}
	For any $n$-dimensional manifolds $(M, g)$ with Ricci curvature $Ric_M\ge (n-1)\kappa$, we have a prior $L^1$-bound for the scalar curvature
	$\dsp\int_{B_1}scal \,\operatorname d {vol}_{g}\le c(n,\kappa)$.
\end{conjecture}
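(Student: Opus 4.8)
The plan is to reduce Conjecture~\ref{c:Yau} to a single \emph{volume decay estimate for the curvature scale} and then sum a geometric series over dyadic scales, in the spirit of the stratification arguments used in the body of this paper for $\cH^{n-1}\big(\cKS_\epsilon(X)\cap B_r\big)$. First note that $Ric_M\ge(n-1)\kappa$ forces $scal\ge n(n-1)\kappa$, so there is no cancellation to fear; and since $|scal|\le C(n)\,|Rm|$ pointwise while $\Vol{B_1}\le V_\kappa(1)$ by Bishop--Gromov (with $V_\kappa$ the model-space volume function), it suffices to prove $\int_{B_{1/2}}|Rm|\,\operatorname{d}{vol}_g\le c(n,\kappa)$.

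For $x\in B_{1/2}$ let $r_x$ be the curvature scale --- the largest $r\le 1$ with $\sup_{B_r(x)}|Rm|\le r^{-2}$ --- and set $A_j:=\{x\in B_{1/2}\colon 2^{-j-1}<r_x\le 2^{-j}\}$, so that $B_{1/2}=\bigsqcup_{j\ge 0}A_j$ up to a null set. On $A_j$ one has $|Rm(x)|\le r_x^{-2}<4\cdot 2^{2j}$, hence $\int_{B_{1/2}}|Rm|\,\operatorname{d}{vol}_g\le\sum_{j\ge 0}4\cdot 2^{2j}\,\Vol{A_j}$, and the whole conjecture reduces to a bound of the form $\Vol{\{x\in B_{1/2}\colon r_x\le\rho\}}\le c(n,\kappa)\,\rho^{2+\alpha}$ for some $\alpha>0$ --- any summable improvement over the borderline exponent $2$, equivalently an $L^{1+\alpha/2}$ (or weak--$L^1$-with-log-gain) bound on $|Rm|$.

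To obtain such a decay I would run the quantitative-differentiation / neck-decomposition scheme adapted to a one-sided Ricci bound: cover $B_{1/2}$ by balls along which the normalized volume ratio $\rho'\mapsto\Vol{B_{\rho'}(x)}/V_\kappa(\rho')$ --- monotone non-increasing by Bishop--Gromov and, given $\Vol{B_1(p)}\ge\nu$, pinched into a definite interval --- drops by a fixed amount at only $N(n,\kappa,\nu)$ scales, and then show that off a bad set of the required small volume the rescaled balls stay $\delta_0(n)$-close to the Euclidean ball down to scale $\rho$, whence an $\epsilon$-regularity theorem forces $r_x\gtrsim\rho$. This program does go through under a \emph{two-sided} Ricci bound --- indeed it then yields Cheeger--Naber's $\int_{B_1}|Rm|^2\,\operatorname{d}{vol}_g\le C$, which gives Conjecture~\ref{c:Yau} a fortiori --- it is Petrunin's theorem when $sec_M\ge\kappa$, and for $n=2$ the relative Gauss--Bonnet identity on $B_1$ plus geodesic-curvature comparison on $\partial B_1$ settles it directly.

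The main obstacle, and the reason Conjecture~\ref{c:Yau} remains open for $n\ge 3$, is precisely the $\epsilon$-regularity step under a \emph{one-sided} bound only. With merely $Ric_M\ge(n-1)\kappa$ the positive part of $scal$ is not controlled by volume comparison, the good harmonic coordinates that would let one rewrite $scal\lesssim\tfrac12\Delta\big(\textstyle\sum_i|\nabla u_i|^2\big)+(\text{error})$ by Bochner and integrate against a cutoff are not available near singular scales, and the known quantitative stratification yields the singular set's codimension and Minkowski content but not the sharp volume decay at the borderline exponent $2$; removing the non-collapsing parameter $\nu$, as the conjecture is stated and as Petrunin achieves under $sec_M\ge\kappa$, is a further layer of difficulty. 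A realistic intermediate target --- and the one achieved in this paper --- is the $C^2$-analogue $\int_{B_r(p)}\cK(x)\,\operatorname{d}\cH^{n-1}\le c(n,\kappa,\nu)\,r^{n-2}$, which replaces the smooth curvature density $scal\,\operatorname{d}{vol}_g$ by the bending density $\cK\,\operatorname{d}\cH^{n-1}$ one codimension lower; the full smooth statement of Conjecture~\ref{c:Yau} should be expected to require genuinely new input beyond comparison geometry and quantitative stratification.
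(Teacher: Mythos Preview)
The statement in question is Conjecture~\ref{c:Yau}, which the paper presents explicitly as an \emph{open problem}: immediately after stating it the paper writes ``Despite some progress \ldots\ it remains an open problem in mathematics.'' There is no proof of this statement in the paper to compare against; the paper only quotes Petrunin's partial result (Theorem~\ref{t:int.scal}) under the stronger hypothesis $\sec_M\ge\kappa$.

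Your proposal is, accordingly, not a proof either, and you are candid about this. What you have written is a correct and well-informed outline of the natural strategy (reduce to an $L^1$ bound on $|Rm|$, introduce the curvature scale $r_x$, dyadically decompose, reduce to a volume-decay estimate $\Vol{\{r_x\le\rho\}}\le c\rho^{2+\alpha}$), followed by an accurate diagnosis of where it breaks: the $\epsilon$-regularity step is unavailable under a one-sided Ricci bound, and the dependence on a non-collapsing parameter $\nu$ would in any case have to be removed. Your side remarks --- that the scheme succeeds under two-sided Ricci via Cheeger--Naber, under $\sec\ge\kappa$ via Petrunin, and in dimension $2$ via Gauss--Bonnet --- are all correct.

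So there is no discrepancy to report: the paper does not prove Conjecture~\ref{c:Yau}, and your writeup is a discussion of why it is hard rather than a proof with a repairable gap. If anything, be aware that labeling such a discussion a ``proof proposal'' is a misnomer; it would be more accurately titled a heuristic or a survey of obstructions.
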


It posits that if the Ricci curvature is bounded from below, then the regions where the scalar curvature is positively concentrated do not have a large volume. One can also compare it to the Gauss-Bonnet Theorem. Despite some progress (see the following theorem) in proving the conjecture under stronger conditions, it remains an open problem in mathematics.

\begin{theorem}[Petrunin \cite{Pet09}]\label{t:int.scal}
	Conjecture \ref{c:Yau} holds if the sectional curvature $\sec_M\ge \kappa$.
\end{theorem}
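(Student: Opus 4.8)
The plan is to regard the scalar curvature as an average of sectional curvatures, and then to exploit two facts pulling in the same direction: a lower bound on \emph{all} sectional curvatures caps how much curvature a minimizing geodesic can accumulate, while $\Ric_M\ge(n-1)\kappa$ (a consequence of $\sec_M\ge\kappa$) caps, via Bishop--Gromov, the volume of a fixed ball; the two are married through an integral‑geometric (Santal\'o‑type) argument. Concretely, at each $x$ one has $\Ric(v,v)\ge(n-1)\kappa$ for every unit $v$, and $\mathrm{scal}(x)=\frac{n}{\omega_{n-1}}\int_{U_xM}\Ric(v,v)\,dv$ with $\omega_{n-1}=\cH^{n-1}(\dS^{n-1})$. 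Setting $\rho(x):=\mathrm{scal}(x)-n(n-1)\kappa=\frac{n}{\omega_{n-1}}\int_{U_xM}\bigl(\Ric(v,v)-(n-1)\kappa\bigr)\,dv\ge 0$, it suffices to bound $\int_{B_1}\rho\,d\mathrm{vol}$, since then $\int_{B_1}\mathrm{scal}\le\int_{B_1}\rho+n(n-1)|\kappa|\Vol{B_1(p)}$ and $\Vol{B_1(p)}$ is controlled by Bishop--Gromov (for $\kappa>0$ one first invokes Bonnet--Myers so the ambient region is compact). A rescaling reduces to $\kappa\in\{-1,0,1\}$ (the estimate is not scale invariant, but $c$ is allowed to depend on $\kappa$).

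\emph{Local bound along minimizing geodesics.} For a minimizing geodesic $\gamma\colon[0,\ell]\to M$, a parallel orthonormal frame $E_1,\dots,E_{n-1}$ of $(\gamma')^{\perp}$, and the variation fields $V_i=\sin(\pi t/\ell)E_i$, nonnegativity of the index form (valid because $\gamma$ is minimizing), summed over $i$, yields $\int_0^\ell\sin^2(\pi t/\ell)\,\Ric(\gamma',\gamma')\,dt\le(n-1)\pi^2/(2\ell)$. Since $\Ric(\gamma',\gamma')-(n-1)\kappa\ge 0$ and $\sin^2\ge\tfrac12$ on $I_\gamma:=[\ell/4,3\ell/4]$, this gives $\int_{I_\gamma}\bigl(\Ric(\gamma',\gamma')-(n-1)\kappa\bigr)\,dt\le b(n)\bigl(\ell^{-1}+|\kappa|\ell\bigr)$: the nonnegative density $\Ric-(n-1)\kappa$ accumulates at most $\lesssim_n\ell^{-1}+|\kappa|\ell$ along the middle portion of any minimizing segment of length $\ell$.

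\emph{Passing to the ball.} Write $\int_{B_1}\rho\,d\mathrm{vol}=\frac{n}{\omega_{n-1}}\int_{UM|_{B_1}}\bigl(\Ric-(n-1)\kappa\bigr)\,d\mathcal L$ with $\mathcal L$ the geodesic‑flow‑invariant Liouville measure. I would estimate first over the ``good'' set $\mathcal G\subseteq UM|_{B_1}$ of $(q,v)$ whose geodesic is minimizing and stays in $B_2(p)$ for parameter in $[-\ell_0,\ell_0]$ with $q$ in its middle portion: on $\mathcal G$ the Santal\'o‑type change of variables (exactly the flow‑invariance of $d\mathcal L\,dt$ used in the proof of the segment inequality) converts the previous step into $\int_{\mathcal G}\bigl(\Ric-(n-1)\kappa\bigr)\,d\mathcal L\le c(n,\kappa)\,\mathcal L\bigl(UM|_{B_2(p)}\bigr)\asymp c(n,\kappa)\Vol{B_2(p)}\le c'(n,\kappa)$, and a Vitali cover of $B_1(p)$ by boundedly many balls (multiplicity bounded by Bishop--Gromov) lets one rerun this at several base points. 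This leaves the ``bad'' set $UM|_{B_1}\setminus\mathcal G$ of directions lying only on short minimizing segments.

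\emph{The main obstacle.} The hard part is precisely the bad set: that is where almost‑collapsed, highly curved pieces sit — picture a tiny, nearly round spherical cap grafted into $M$, where the curvature density is arbitrarily large while the occupied volume is tiny, so that only the \emph{integral}, not the density, can be controlled, in the spirit of Gauss--Bonnet in dimension $2$. This is exactly where the full hypothesis $\sec_M\ge\kappa$ — not merely $\Ric_M\ge(n-1)\kappa$ — must be used: a lower \emph{sectional} bound forbids the thin necks one would need to string together unboundedly many such pieces inside $B_1$. The way to exploit this is a scale‑induction: a bad region of scale $r\le1$, rescaled to unit size, still satisfies $\sec\ge r^2\kappa$ and, by the neck obstruction, must spend a definite fraction of the volume of a fixed ball, so only $\lesssim_{n,\kappa}1$ disjoint such regions occur; a dichotomy (``either controlled by a longer minimizing segment as above, or recurse at a smaller scale'') then closes the estimate. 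Setting up this dichotomy so the recursion terminates with uniform constants — and, for $n\ge3$, supplying the replacement for the Gauss--Bonnet input — is the genuinely difficult step, and is the core of Petrunin's argument \cite{Pet09}.
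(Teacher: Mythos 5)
This theorem is not proved in the paper --- it is cited as Petrunin's result \cite{Pet09}; the paper only remarks in passing that its covering theorems of Section \ref{s:annuli} yield a simplified proof in the non-collapsing case. Your proposal, however, takes a genuinely different route from both Petrunin's original argument and the methodology this paper develops: Petrunin works with convex level sets (of strictly concave functions built from distance functions), relates ambient scalar curvature to the intrinsic scalar curvature of those level sets via the Gauss equation, and inducts on dimension --- which is precisely the spirit of this paper's ``covering by convex annuli'' plus ``monotonicity on level sets'' machinery. You instead try to average Ricci over the unit sphere bundle, use the second variation along minimizing geodesics, and pass to the ball by a Santal\'o/segment-inequality change of variables. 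The index-form step is correct: summing the nonnegativity of $I(V_i,V_i)$ with $V_i=\sin(\pi t/\ell)E_i$ does give $\int_0^\ell\sin^2(\pi t/\ell)\,\Ric(\gamma',\gamma')\,dt\le(n-1)\pi^2/(2\ell)$, and the reduction of $\int\mathrm{scal}$ to $\int\rho$ plus a Bishop--Gromov volume term is fine.

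The genuine gap is the ``bad set,'' and it is not merely a technical loose end --- it is the whole content of the theorem, and the mechanism you sketch for handling it is not right. First, everything in your ``good set'' estimate uses only $\Ric\ge(n-1)\kappa$, so if the bad set could be dispatched without essential use of $\sec\ge\kappa$, the scheme would settle Yau's Conjecture \ref{c:Yau} itself, which remains open. Second, the dichotomy you propose --- ``a bad region of scale $r$, after rescaling, must occupy a definite fraction of a fixed ball's volume, so only boundedly many can occur'' --- is false under a lower sectional curvature bound: $\sec\ge\kappa$ does \emph{not} preclude collapse (flat tori, thin cylinders, nearly one-dimensional cigars), so the ``definite volume fraction'' claim fails and the counting argument collapses. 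Indeed Petrunin's bound holds with no non-collapsing hypothesis whatsoever, so any proof that proceeds by counting regions of definite volume cannot be the right mechanism. The actual replacement for the ``Gauss--Bonnet input for $n\ge3$'' that you flag as missing is exactly Petrunin's Gauss-equation/level-set argument; your sketch identifies the right obstacle but offers a workaround that would not survive scrutiny. You are candid that this is where the core difficulty lives, which is to your credit, but as written the proposal does not contain a proof and the particular recursion it gestures at would not terminate with uniform constants.
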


One can ask similar questions on length metric spaces with synthetic lower curvature bounds. Let $\Alex^n(\kappa)$ be the collection of $n$-dimensional Alexandrov spaces with curvature $\ge \kappa$ and $X\in\Alex^n(-1)$. Let $\cS^k(X)$ be the $k$-stratum singular set of points in $X$ whose tangent cones do not split off $\dR^{k+1}$ isometrically. Due to Burago-Gromov-Perelman, and Cheeger-Colding's work, we have that $\dim_{\cH}(\cS^k(X))\le k$ for both Alexandrov spaces \cite{BGP} and Ricci limit spaces \cite{CC97-I}. Though the dimension estimate gives a nice control for the singular set, $\cS^k(X)$ may still have infinite $k$-dimensional Hausdorff measure (see \cite{OtsShi94}). Since a quantitative estimate of $\cS^k(X)$ is needed in geometric analysis, we have the following result in the further study. Let $\cS^k_\epsilon(X)$ denote the set of points $x\in X$ whose tangent cone $T_x(X)$ is $\epsilon$-away from splitting off $\dR^{k+1}$. That is, for any $x\in\cS^k_\epsilon(X)$, either $T_x(X)$ doesn't split off $\dR^k$ or $T_x(X)=\dR^k\times C(\Sigma^{n-k-1}_x)$ and the  Gromov-Hausdorff distance $d_{GH}(\Sigma^{n-k-1}_x, \dS_1^{n-k-1})\ge\epsilon$. The following Hausdorff measure estimate was given in \cite{LiNab20} by A. Naber and the author: 
\begin{align}
	\mathcal H^k\big(\cS^k_{\epsilon}(X) \cap B_1\big)\le C(n,\epsilon).
\end{align}
It was conjectured \cite{LiNab20} that the upper bound $C(n,\epsilon)$ grows in an order of $\epsilon^{1-(n-k)}$, as $\epsilon\to 0$. 
\begin{conjecture}[Li-Naber \cite{LiNab20}]\label{conj1}
	For any $X\in\Alex^n(-1)$, we have 
	\begin{align}
		\cH^{k}(\cS^{k}_\epsilon(X)\cap B_1)<C(n)\epsilon^{1-(n-k)}.
		\label{conj1.e1}
	\end{align}
\end{conjecture}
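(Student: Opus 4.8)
\textbf{Towards Conjecture \ref{conj1}.} The plan is to follow the curvature-measure philosophy of this paper and, for each codimension, produce a ``$k$-dimensional curvature measure'' whose total mass is bounded and whose density detects the quantitative stratum. For each $0\le k\le n-1$ I would define a density $\cK_k\colon X\to[0,\infty)$ --- generalizing the measure $\cK\operatorname d\cH^{n-1}$ of this paper (the case $k=n-1$) and Petrunin's scalar-curvature measure $scal\,\operatorname d vol$ (the case $k=n$) --- supported on $\cS^k(X)$, and such that at a point $x$ with tangent cone $T_xX=\dR^k\times C(\Sigma^{n-k-1}_x)$ one has $\cK_k(x)\ge c(n)\big(\cH^{n-k-1}(\dS^{n-k-1})-\cH^{n-k-1}(\Sigma^{n-k-1}_x)\big)$. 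Then Conjecture \ref{conj1} would follow from two ingredients: \emph{(i)} a Yau--Petrunin-type $L^1$-bound $\int_{B_1(p)}\cK_k\operatorname d\cH^k\le c(n,\kappa,\nu)$ for $X\in\Alex^n(\kappa)$ with $\Vol{B_1(p)}\ge\nu>0$, uniform over $\kappa\ge-1$; and \emph{(ii)} a pointwise lower bound $\cK_k(x)\ge c(n)\,\epsilon^{\,n-k-1}$ on $\cS^k_\epsilon(X)$. Granting these, Chebyshev's inequality for the measure $\cK_k\operatorname d\cH^k$ gives $c(n)\,\epsilon^{\,n-k-1}\,\cH^k\big(\cS^k_\epsilon(X)\cap B_1\big)\le\int_{B_1}\cK_k\operatorname d\cH^k\le c(n,\kappa,\nu)$, which is \eqref{conj1.e1} up to removing the $\kappa$- and $\nu$-dependence of the constant.

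The content of \emph{(ii)} is a sharp quantitative volume rigidity for cross-sections: if $\Sigma^m$ is an $m$-dimensional Alexandrov space with curvature $\ge1$ and $d_{GH}(\Sigma^m,\dS^m)\ge\epsilon$, then $\cH^m(\dS^m)-\cH^m(\Sigma^m)\ge c(m)\,\epsilon^{\,m}$, the exponent being optimal (a ``dimple'' deformation of $\dS^m$ on a ball of radius $\sim\epsilon$ moves the space $\sim\epsilon$ in the Gromov--Hausdorff distance but costs only $\sim\epsilon^{\,m}$ of volume). I would prove this by localizing the Bishop--Gromov comparison around a point realizing the GH-deficit and integrating over concentric spheres, upgrading the qualitative volume-continuity of Alexandrov spaces to the claimed Hölder modulus. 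Applying it with $m=n-k-1$, together with the elementary fact that the deficit of $\Sigma^{n-k-1}$ bounds from below the curvature-measure density of the cone $\dR^k\times C(\Sigma^{n-k-1})$ along its axis, gives \emph{(ii)}; here one also uses that $\cS^k_\epsilon(X)$ coincides, up to the $\cH^k$-null set $\cS^{k-1}(X)$, with $\{x:\ T_xX=\dR^k\times C(\Sigma^{n-k-1}),\ d_{GH}(\Sigma^{n-k-1},\dS^{n-k-1})\ge\epsilon\}$.

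Ingredient \emph{(i)} is the heart and generalizes the main estimate of this paper from codimension $1$ to codimension $n-k$. I would attack it by quantitative differentiation and covering: after normalizing so that $s\mapsto\cH^n(B_s(x))/v_{n,\kappa}(s)$ is almost monotone, the scales at which it drops by more than $\tau$ number at most $c(n,\nu)/\tau$; at the other scales $B_s(x)$ is close to a metric cone, and the cone-splitting principle for Alexandrov spaces (via Perelman's stability theorem) confines the points of $B_1(p)$ that are $k$-conical but not $(k+1)$-conical at scale $s$ to a controlled tube around a $k$-dimensional affine subspace. Transversally to such a subspace one is in the codimension-$1$ situation of this paper inside the normal slice, so the paper's codim-$1$ $L^1$-bound applies there; summing a Vitali subfamily over scales produces the bound for $\int_{B_1}\cK_k\operatorname d\cH^k$. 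A possibly cleaner alternative is a downward induction on $k$ from the vacuous base case $k=n-1$ (for boundaryless $X$): disintegrate $\cH^k$ restricted to $\cS^k(X)$ along the approximate $\dR^k$-factor at a generic stratum point and invoke the theorem already proved here on the $2$-dimensional transverse slice.

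The main obstacle I anticipate is precisely \emph{(i)}. The proof in this paper exploits genuinely $1$-codimensional structure --- the $\cH^{n-1}$-rectifiability and fine local geometry of $\cKS(X)$, and the one-dimensionality of the transverse cross-section --- and none of this transfers for free; in particular one first needs $\cS^k(X)\cap B_1$ to be $\cH^k$-rectifiable (or at least $\sigma$-finite) even to make sense of $\cK_k\operatorname d\cH^k$, and proving this seems to be most of the work. Two further difficulties: reducing the constant to the clean $C(n)$ of \eqref{conj1.e1}, where a rescaling-and-covering argument should absorb $\kappa$ but $\nu$-freeness is the genuinely hard Petrunin-type content (likely requiring one to work directly with the cone structure of thin spaces); and the fact that a purely quantitative-stratification argument, iterated naively, only yields a bound exponential in $1/\epsilon$, so that getting the \emph{polynomial} rate $\epsilon^{1-(n-k)}$ really does seem to need the curvature-measure mechanism above, or equivalently a very carefully constant-tracked $L^2$-best-approximation and Reifenberg analysis adapted to the synthetic setting.
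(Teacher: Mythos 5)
This statement is, in the paper itself, an explicitly open conjecture (attributed to \cite{LiNab20}): the paper records it for motivation and makes progress on a related measure via Theorems~A and~B, but does not prove it. There is therefore no paper proof to compare you against; I can only assess whether your program is sound, and I do not think it closes the conjecture. The Chebyshev reduction is correct in form, but it repackages rather than resolves the difficulty: ingredients \emph{(i)} and \emph{(ii)} together are at least as strong as Conjecture~\ref{conj1} and overlap heavily with Conjecture~\ref{c:int-sing}, which the paper also leaves open, and you yourself flag \emph{(i)} as being ``the heart.'' Running the paper's codimension-one monotonicity in a family of $(n-k)$-dimensional normal slices transverse to the $\dR^k$ factor would require a uniformly controlled Alexandrov slicing near generic points of $\cS^k$; producing such slices with uniform constants is precisely what the paper avoids by working only with level sets of a single semiconcave function, and seems to carry most of the weight of the conjecture.

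There is also a concrete issue with \emph{(ii)}. The claim that $\Sigma^m\in\Alexnk[m]$, $\mathrm{curv}\ge1$, $d_{GH}(\Sigma^m,\dS^m)\ge\epsilon$ forces $\cH^m(\dS^m)-\cH^m(\Sigma^m)\ge c(m)\epsilon^m$ is not an available ingredient, and the heuristic you offer for its sharpness does not work: flattening a cap of radius $\epsilon$ moves the space in $d_{GH}$ only by the sagitta, of order $\epsilon^2$, not $\epsilon$, and in any case flattening destroys the lower curvature bound $\mathrm{curv}\ge1$. The deformations that do respect $\mathrm{curv}\ge1$, such as spherical suspensions over shorter cross-sections, give volume deficit comparable to $\epsilon$ in every dimension, which suggests the correct modulus in \emph{(ii)} may not be $\epsilon^m$ at all; since the exponent $1-(n-k)$ in \eqref{conj1.e1} is wired to that modulus through the Chebyshev step, getting it wrong changes the rate you can conclude. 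Finally, note that Theorem~A does not supply a base case for a downward induction on $k$: it concerns $\cKS_\epsilon$ and $\cK\,\operatorname d\cH^{n-1}$, a $C^2$-singularity object distinct from $\cS^{n-1}_\epsilon$ (points with Euclidean tangent cone can be $C^2$-singular, and for boundaryless $X$ one has $\cS^{n-1}=\cS^{n-2}$ of dimension $\le n-2$, so the $k=n-1$ instance of \eqref{conj1.e1} is vacuous). The first nontrivial case $k=n-2$ is exactly the $\theta_{n-2}$-integral bound of Conjecture~\ref{c:int-sing}, which remains open here.
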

We can re-write inequality (\ref{conj1.e1}) as 
\begin{align}
	\epsilon^{n-k-1}\cdot\cH^{k}(\cS^{k}_\epsilon\cap B_1)<C(n)
\end{align}
and formulate a much stronger conjecture in a summable form \cite{LiNab20} or the following integral form. This is based on the observation of Gauss-Bonnet Theorem and some higher dimensional examples. 
Let $\theta_k(x)=2\pi$ for $x\in\cS^{k-1}(X)$, and $\theta_k(x)=2\pi-\diam(\Sigma^{n-k-1}_x)$ for $x\in\cS^k(X)\setminus\cS^{k-1}(X)$ whose tangent cone $T_x(X)=\dR^k\times C(\Sigma^{n-k-1}_x)$. Here  $\theta_k(x)$ measure the $k$-singularity of $x$. 

\begin{conjecture}\label{c:int-sing}
	For any $X\in\Alex^n(-1)$, we have $\displaystyle \int_{B_1}\theta_k^{n-k-1}(x)\cdot \operatorname d\cH^k < C(n)$.
In particular, for $k=n-2$, we have 
$\dsp\int_{B_1}\theta_{n-2}(x)\cdot \operatorname d\cH^{n-2} < C(n)$.
\end{conjecture}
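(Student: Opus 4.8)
The plan is to establish the conjecture first in the codimension-two case $k=n-2$, where the exponent $n-k-1$ equals $1$ and $\int_{B_1}\theta_{n-2}\,d\cH^{n-2}<C(n)$ is the faithful higher-dimensional analogue of the classical bound on the total positive curvature of a $2$-dimensional Alexandrov surface, and then to attack general $k$ by iterated slicing. For $k=n-2$ one first discards $\cS^{n-3}(X)$, which has Hausdorff dimension $\le n-3$ and hence carries no $\cH^{n-2}$ mass, so that only the top stratum $\cS^{n-2}(X)\setminus\cS^{n-3}(X)$ matters; on it the tangent cone is $\dR^{n-2}\times C(\Sigma^1_x)$ with $\Sigma^1_x$ a circle of length $L(x)$, the defect is the cone-angle deficit $\theta_{n-2}(x)=2\pi-L(x)$, the set is $\cH^{n-2}$-rectifiable (by Perelman's stability it is locally a Lipschitz $(n-2)$-submanifold), and at each of its points there is an $(n-2,\delta)$-strainer for every $\delta>0$. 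The idea is then: on a suitable local piece, choose $n-2$ distance functions from such a strainer, intersect their level sets, and apply a two-dimensional Gauss--Bonnet estimate on the resulting slices.

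The heart of the argument is twofold. First, the common level set of a well-chosen $(n-2)$-tuple of distance functions $d_{a_1},\dots,d_{a_{n-2}}$ is, with its induced length metric, a $2$-dimensional Alexandrov space $\Sigma_{\vec t}$ with curvature bounded below --- this should follow from the Perelman--Petrunin theory of semiconcave functions and their level sets once one controls the concavity of the $d_{a_i}$ and the angles between their gradients --- and a cone point $x\in\cS^{n-2}\setminus\cS^{n-3}$ lying on such a slice is a cone point of $\Sigma_{\vec t}$ of deficit $(1+o_\delta(1))\,\theta_{n-2}(x)$, because near $x$ the space splits as $\dR^{n-2}\times C(\Sigma^1_x)$ and the slice is, up to small error, the transverse factor $C(\Sigma^1_x)$. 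Second, on a $2$-dimensional Alexandrov surface the total positive curvature on $B_1$ is bounded by a universal constant (a standard consequence of Gauss--Bonnet together with the lower curvature bound and Bishop--Gromov), so each slice carries only a bounded amount of cone-angle deficit, $\sum_{x\in\cS^{n-2}\cap\Sigma_{\vec t}\cap B_1}\theta_{n-2}(x)\le C$. Integrating over the level values $\vec t$ (whose relevant range is bounded), applying the co-area formula for the map $(d_{a_1},\dots,d_{a_{n-2}})$ restricted to the rectifiable set $\cS^{n-2}\setminus\cS^{n-3}$, and choosing for each portion of that set a strainer for which this map has co-area Jacobian bounded below, converts the slicewise bound into $\int_{\cS^{n-2}\cap B_1}\theta_{n-2}\,d\cH^{n-2}\le C(n,\kappa)$, which for $\kappa=-1$ is a constant $C(n)$. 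The technical burden here is the curvature estimate on the slices, the robustness of the identification of cone points of $X$ with cone points of the slice of comparable deficit, and the co-area Jacobian lower bound.

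For general $k$ one intersects instead $k$ distance functions of a $k$-strainer, obtaining $(n-k)$-dimensional Alexandrov slices on which a point $x\in\cS^k\setminus\cS^{k-1}$ with tangent cone $\dR^k\times C(\Sigma^{n-k-1}_x)$ appears as a cone point with cross-section $\Sigma^{n-k-1}_x$; the required input then becomes a Gauss--Bonnet--Chern-type statement on $(n-k)$-dimensional Alexandrov spaces, namely that the total Lipschitz--Killing-type curvature concentrated at the cone points --- which should scale like $\sum_x\theta_k(x)^{\,n-k-1}$ --- is bounded on $B_1$. For $n-k=2$ this is precisely Gauss--Bonnet; for $n-k\ge 3$ it is not available, since there is no Chern--Gauss--Bonnet theorem on singular metric spaces, and this is where I expect the main obstacle to lie. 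The natural attempt --- slice $n-k-1$ further times and recover the power $\theta_k^{n-k-1}$ as a product of $n-k-1$ one-dimensional cone-angle defects --- stumbles because there is no identity linking the product of the lower-dimensional defects of the iterated slices to the genuine defect $\theta_k^{n-k-1}$ of $\Sigma^{n-k-1}_x$, the most singular directions dominate the integral while being the least amenable to slicing, and the $o_\delta(1)$ errors of the $n-k-1$ nested strainer maps threaten to pile up. A plausible alternative, likely needed for the sharp dependence on $n$ alone, is to bypass slicing and construct intrinsically the higher Lipschitz--Killing curvature measures on Alexandrov spaces, in the spirit of Petrunin's curvature measures and of the $\cK$-measure built in this paper; closing the gap between the extrinsic setting --- non-collapsed Gromov--Hausdorff limits of manifolds with $\sec\ge\kappa$, where the conjecture holds by Petrunin's work --- and the purely synthetic one is exactly what keeps Conjecture~\ref{c:int-sing} open.
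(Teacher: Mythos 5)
This statement is labeled a \emph{conjecture} in the paper and is not proved there; the paper only records that the $k=n-2$ case follows from Theorems~\ref{t:int.scal} and~\ref{t:Pet22} when $X$ is a non-collapsed Gromov--Hausdorff limit of manifolds with $\sec\ge\kappa$, while the paper's own Theorem A bounds the integral of the \emph{codimension-one} $C^2$-singularity function $\cK$, not of $\theta_k$. So there is no proof in the paper to compare you against, and you correctly conclude that the general synthetic statement remains open.

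That said, your outline for $k=n-2$ contains several steps that are genuinely unproved, not merely technical. The claim that the common level set $\Sigma_{\vec t}$ of $n-2$ strainer distance functions, with its induced length metric, is a $2$-dimensional Alexandrov space with a uniform lower curvature bound does not follow from the Perelman--Petrunin semiconcavity theory: level sets of a single semiconcave function need not be Alexandrov, and for an intersection of several level sets even the right category is unclear. The identification of a cone point of $X$ on the slice with a cone point of $\Sigma_{\vec t}$ of deficit $(1+o_\delta(1))\theta_{n-2}(x)$ requires control of the slice's \emph{intrinsic} geometry at the tangent-cone scale, whereas the strainer only gives Gromov--Hausdorff control of the ambient geometry. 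And the co-area step needs a uniform Jacobian lower bound for the strainer map restricted to the merely $\cH^{n-2}$-rectifiable set $\cS^{n-2}\setminus\cS^{n-3}$, which is not automatic. For general $k$ your diagnosis of the obstruction is accurate: there is no Chern--Gauss--Bonnet analogue for singular $(n-k)$-dimensional slices, the exponent $n-k-1$ does not factor multiplicatively across iterated one-dimensional slices, and nested strainer errors compound. For what it is worth, the paper sidesteps all of this by working with $\cK$ instead of $\theta_k$ and replacing slicing-to-surfaces by a stratified convex-annulus covering together with a one-dimensional monotonicity formula on corner spaces (Corollary~\ref{c:mono.corner} and Lemma~\ref{l:int.formula.K}); that machinery yields Theorem~A but does not feed back into a bound for $\int_{B_1}\theta_k^{\,n-k-1}\,\operatorname d\cH^k$.
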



Note that $\cS(X)=\cup_{k=0}^{n-1}\cS^k(X)$ contains all geometric singular points, as all tangent cones at $x\notin \cS(X)$ are isometric to $\dR^n$ by definition. However, the metric away from $\cS(X)$ may still be non-smooth. For instance, if we double an $n$-dimensional smooth manifold $M$ with boundary, and glue them along their boundaries, then the glued metric may not be smooth at any of the glued point. The glued points have co-dimension 1 and their tangent cones are all isometric to $\dR^n$ if the boundary $\partial M$ is smooth. In particular, these points can't be differentiated from the smooth points by their tangent cones. 
In this paper, we define a singularity function $\cK(x)\ge 0$ which measures the extent to which the metric is from being locally $C^2$ at a point $x$. See Section \ref{s:defn} for details and examples. In particular, we have that every point in $\cKS(X)=\{x\in X\colon \cK(x)>0\}$ is not locally $C^2$. The larger $\cK(x)$ is, the more curving the space would look like near $x$. Let $\cKS_\epsilon(X)=\{x\in X\colon \cK(x)\ge\epsilon\}$. We have the following quantitative estimates. 

\begin{Theorem A}\label{t:main}
	Let $(X,p)\in\Alex^n(-1)$ with $vol(B_1(p))\ge \nu >0$. Then for any $B_r\subseteq B_1(p)$, we have 
	\begin{align}
		\int_{B_r} \cK(x)\, \operatorname d\cH^{n-1}\le c(n,\nu)r^{n-2}.
		\label{t:main.e1}
	\end{align}
	In particular, we have the Hausdorff measure estimate
	\begin{align}
		\cH^{n-1}\left(\cKS_\epsilon(X)\cap B_r\right)\le \epsilon^{-1}\cdot c(n,\nu)r^{n-2}
	\end{align}
	for $\epsilon>0$, and the Hausdorff dimension estimate
	\begin{align}
		\dim_\cH(\cKS(X))\le n-1. 
	\end{align}
\end{Theorem A}

It turns out that $\cK(x)\, \operatorname d\cH^{n-1}$ can also be viewed as a synthetic curvature measure on Alexandrov spaces, as a counterpart of $scal \,\operatorname d {vol}_{g}$ on smooth manifolds. Suppose $(M_i,g_i)$ Gromov-Hausdorff converges to a limit space $(X, d)$, and $scal_i$ satisfy the integral bounds in Conjecture \ref{c:Yau}. Passing to a sub-sequence, as a signed measure, $scal_i \,\operatorname d {vol}_{g_i}$ converges to a measure on $X$. A.~Naber proposed the following conjecture. 

\begin{conjecture}[Conjecture 2.20 in \cite{Nab20}]\label{c:measure conv}
	Let $(M_i, g_i)$ be a sequence of $n$-dimensional manifolds with lower Ricci or sectional curvature bounds. If $M_i$ Gromov-Hausdorff converge to $X$ without collapsing, then as measure, 
	$$ scal_i \,\operatorname d {vol}_{g_i} \to {\operatorname R}\,\operatorname d \cH^n + \Phi\,\operatorname d \cH^{n-1} +\theta \,\operatorname d \cH^{n-2},
	$$
	where ${\operatorname R}$, $\Phi$ and $\theta$ are locally $L^1$-functions with respect to $\cH^n$, $\cH^{n-1}$ and $\cH^{n-2}$ respectively. The function $\Phi(x)$ is supported on an $(n-1)$-rectifiable subset. The function $\theta(x)=\theta_{n-2}(x)$ is defined as in Conjecture \ref{c:int-sing}, which is supported on the top stratum of the $(n-2)$-rectifiable singular set $\cS(X)=\cS^{n-2}(X)$.
\end{conjecture}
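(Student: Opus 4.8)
We must show that if $X_i\in\Alex^n(-1)$ with $\operatorname{vol}(B_1(p_i))\ge\nu>0$ Gromov-Hausdorff converge to a smooth Riemannian manifold $(M,p)$ without collapsing, then $\cK_i\,\operatorname d\cH^{n-1}\to 0$ as measures. The plan is to combine Theorem A, which supplies uniform mass bounds, with an $\epsilon$-regularity statement that forces the measures to disappear in the limit. Write $\mu_i:=\cK_i\,\operatorname d\cH^{n-1}$ and regard the $\mu_i$ as measures on $M$ via the Gromov-Hausdorff approximations $\phi_i\colon X_i\to M$. By Theorem A, $\mu_i(B_r(x))\le c(n,\nu)\,r^{n-2}$ for all $B_r(x)\subseteq B_1$, so $\{\mu_i\}$ is locally uniformly bounded and, after passing to a subsequence, $\mu_i\rightharpoonup\mu$ weakly. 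Since each $\mu_i\ge 0$, the assertion ``$\mu_i\to 0$ as a measure'' is equivalent to $\int_K\cK_i\,\operatorname d\cH^{n-1}\to 0$ for every compact $K\subseteq M$, and this is what I will prove.

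The only geometric input specific to the hypothesis is that $M$ is smooth, hence \emph{almost Euclidean at small scales}: for a fixed compact $K$ and any $\epsilon>0$ there is $\rho_0=\rho_0(K,\epsilon)>0$, with $\rho_0\sim\sqrt{\epsilon}$, such that $\Vol{B_\rho^M(x)}\ge(1-\epsilon)\,\omega_n\rho^n$ for every $x\in K$ and $\rho\le\rho_0$. By the continuity of the $n$-dimensional Hausdorff measure under non-collapsed Gromov-Hausdorff convergence \cite{BGP}, for $i$ large every ball $B_\rho(y)\subset X_i$ with $\phi_i(y)\in K$ satisfies $\Vol{B_\rho(y)}\ge(1-2\epsilon)\,\omega_n\rho^n$; by Bishop-Gromov monotonicity in $X_i$ this near-maximality passes to every scale $\rho'\le\rho$; and by effective Bishop rigidity for Alexandrov spaces — almost maximal volume forces Gromov-Hausdorff closeness to a Euclidean ball — one gets $d_{GH}\big(B_{\rho'}(y),B_{\rho'}^{\dR^n}\big)\le\Psi(\epsilon\mid n)\,\rho'$ for all $\rho'\le\rho$ and all such $y$, with $\Psi(\epsilon\mid n)\to 0$ as $\epsilon\to 0$. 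Thus, on the region over $K$, the spaces $X_i$ are as flat as we please at all sufficiently small scales once $i$ is large.

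The crux is to upgrade Theorem A to an $\epsilon$-regularity estimate: there is $\omega(\delta\mid n,\nu)\to 0$ as $\delta\to 0$ such that if $(Z,q)\in\Alex^n(-1)$, $\Vol{B_1(q)}\ge\nu$, and $d_{GH}(B_r(z),B_r^{\dR^n})\le\delta r$ for all $z\in B_1(q)$ and all $r\le 1$, then $\int_{B_{1/2}(q)}\cK\,\operatorname d\cH^{n-1}\le\omega(\delta\mid n,\nu)$. I would prove this by inspecting the proof of Theorem A: the bound $c(n,\nu)\,r^{n-2}$ there is obtained by comparing the local geometry of the space with flat model configurations — e.g.\ by Petrunin-type slicing of the ball into two-dimensional surfaces and applying Gauss-Bonnet on them, the one-sided curvature bound guaranteeing that no cancellation can occur — and the comparison defect that dominates $\int\cK\,\operatorname d\cH^{n-1}$ is $o(1)\cdot r^{n-2}$, not merely $O(1)\cdot r^{n-2}$, precisely when these model configurations are themselves nearly flat. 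Rescaled, this reads: if $B_\rho(y)\subset X_i$ is $\delta$-close to Euclidean at every scale $\le\rho$, then $\int_{B_{\rho/2}(y)}\cK_i\,\operatorname d\cH^{n-1}\le\omega(\delta)\,\rho^{n-2}$.

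Granting the $\epsilon$-regularity, cover $\phi_i^{-1}(K)$ by $N\lesssim c_n(\diam(K)/\rho)^n$ balls $B_{\rho/2}(y_k)$ of bounded multiplicity; for $i$ large each contributes at most $\omega(\Psi(\epsilon))\,\rho^{n-2}$ by the previous paragraph, whence $\int_K\cK_i\,\operatorname d\cH^{n-1}\le c_n\,\diam(K)^n\,\rho^{-2}\,\omega(\Psi(\epsilon))$. Letting $i\to\infty$ and then $\epsilon\to 0$ (so that $\rho=\rho_0\sim\sqrt{\epsilon}\to 0$ and $\delta=\Psi(\epsilon)\to 0$) gives $\int_K\cK_i\,\operatorname d\cH^{n-1}\to 0$ provided $\omega\circ\Psi$ decays faster than linearly near $0$. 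To weaken this rate requirement one refines the covering: using the Hausdorff-measure bound $\cH^{n-1}(\cKS_\epsilon(X_i)\cap B_r)\le\epsilon^{-1}c(n,\nu)\,r^{n-2}$ of Theorem A one covers a $\rho$-neighbourhood of $\cKS_\epsilon(X_i)\cap\phi_i^{-1}(K)$ by only $\lesssim\epsilon^{-1}c(n,\nu)\,\diam(K)^{n-2}\,\rho^{-(n-1)}$ balls, lowering the leftover power of $\rho^{-1}$ from $2$ to $1$, while the remaining contribution of the locus $\{0<\cK_i<\epsilon\}$ is controlled by a uniform-integrability estimate for $\cK_i\,\operatorname d\cH^{n-1}$ obtained from a scale-refined version of the $\epsilon$-regularity. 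Either way $\int_K\cK_i\,\operatorname d\cH^{n-1}\to 0$, so $\cK_i\,\operatorname d\cH^{n-1}\to 0$ as a measure. The essential difficulty is the $\epsilon$-regularity of the third paragraph: extracting from the mechanism behind Theorem A a bound on $\int\cK\,\operatorname d\cH^{n-1}$ that degenerates continuously as the local geometry flattens — the rest of the argument is bookkeeping built on Theorem A and classical Alexandrov-space rigidity.
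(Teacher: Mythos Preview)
First, a mismatch: the displayed statement is Naber's conjecture, which the paper leaves open and does not prove. What you actually set out to prove is Theorem~B (the $\cK$-part), so I compare your proposal to the paper's proof of Theorem~B.

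Your argument rests on an $\epsilon$-regularity estimate --- if $B_1(q)$ is $\delta$-close to Euclidean at every scale then $\int_{B_{1/2}}\cK\,\operatorname d\cH^{n-1}\le\omega(\delta)$ --- which you do not prove, and the mechanism you sketch (``Petrunin-type slicing into two-dimensional surfaces and Gauss--Bonnet'') is not how Theorem~A is obtained in this paper. Theorem~A comes from the monotonic formula for level sets of convex annuli; the controlling inequality is (\ref{l:int.formula.K.e4}), which bounds $\int_0^{T'}\!\int_{M\cap\partial W_t}\cK\,\operatorname d\cH^{k-2}\,dt$ by the boundary term $\int_{M\cap\partial W_{T'}}\Theta(x,\partial W_{T'})\,\operatorname d\cH^{k-2}$. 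Even granting your $\epsilon$-regularity, the covering argument has a genuine rate obstruction: with $\rho\sim\sqrt\epsilon$ and $N\sim\rho^{-n}$ balls each contributing $\omega(\delta)\,\rho^{n-2}$, you need $\omega(\Psi(\epsilon))=o(\epsilon)$, which you neither verify nor does the ``refined covering'' paragraph resolve (the uniform-integrability claim there is itself unproven).

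The paper's proof of Theorem~B avoids both issues by exploiting (\ref{l:int.formula.K.e4}) directly. Because the limit $M$ is smooth, one builds on $M$ the stratified $\epsilon$-frame covering of Section~\ref{s:annuli} and perturbs so that all boundaries $\partial W_t$ are smooth; hence $\Theta\equiv 0$ along every $\partial W_t$. This frame is lifted to $X_i$ for $i$ large, away from the finitely many annulus centers (excised to radius $\delta$). Since tangent cones are lower semi-continuous under Gromov--Hausdorff convergence, $\Theta$ is upper semi-continuous, so the boundary term in (\ref{l:int.formula.K.e4}) for the lifted frame tends to $0$; hence $\int\cK_i\to 0$ away from the centers. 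Theorem~A then bounds the contribution of the excised balls by $c(n,\nu,\epsilon)\,\delta^{n-2}\to 0$. The crucial difference from your approach is that the number of annuli in the covering is $c(n,\nu,\epsilon)$, independent of any approximation scale, so there is no $\rho^{-2}$ factor to defeat; all the smallness comes from the vanishing of the $\Theta$-boundary term on a \emph{fixed} frame.
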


The above conjecture is true in the following special cases. 

\begin{theorem}[Petrunin \cite{Pet03}]\label{t:cont.sec}
	Let $(M_i,g_i)$ be a sequence of $n$-dimensional manifolds with $\sec_{M_i}\ge \kappa$ and  $(M_i,p_i, g_i)\overset{d_{GH}}\longrightarrow(M, p, g)$, as $i\to\infty$. If $(M,g)$ is an $n$-dimensional smooth manifold, then 
	\begin{align}
		\dsp\lim_{i\to\infty}\int_{B_1(p_i)}scal_i \,\operatorname d {vol}_{g_i}=\int_{B_1(p)}scal \,\operatorname d {vol}_{g}.
		\label{t:cont.sec.e1}
	\end{align}
\end{theorem}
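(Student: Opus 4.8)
The plan is to represent $\int scal$ through the second-order volume asymptotics of small balls and to pass to the limit there, where non-collapsing makes everything continuous. After rescaling we may assume $|\kappa|\le 1$. Since the limit $(M,g)$ is a smooth $n$-manifold the convergence is non-collapsed, so $\Vol{B_r(x_i)}\to\Vol{B_r(x)}$ whenever $x_i\to x$ along the Gromov--Hausdorff maps, and $\operatorname d vol_{g_i}\rightharpoonup\operatorname d vol_g$ weakly. Put $\mu_i:=(scal_i-n(n-1)\kappa)\,\operatorname d vol_{g_i}$; these are \emph{nonnegative} measures since $\sec_{M_i}\ge\kappa$, and $\mu_i(B_2(p_i))\le C(n,\kappa)$ uniformly by Theorem~\ref{t:int.scal} (applied on $B_2$ after rescaling) together with Bishop's volume bound. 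Passing to a subsequence, $\mu_i\rightharpoonup\mu$ for a finite nonnegative measure $\mu$ on $B_2(p)$; I will show $\mu=(scal-n(n-1)\kappa)\,\operatorname d vol_g$. This determines $\mu$ independently of the subsequence and then gives $\int_{B_1(p_i)}scal_i=\mu_i(B_1(p_i))+n(n-1)\kappa\,\Vol{B_1(p_i)}\to\mu(B_1(p))+n(n-1)\kappa\,\Vol{B_1(p)}=\int_{B_1(p)}scal$, using that $\partial B_1(p)$ is $\mu$-null (as $\mu$ will be absolutely continuous).

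To identify $\mu$, introduce the continuous \emph{approximate scalar curvature}
\[
f_r(x)\ :=\ \frac{6(n+2)}{r^2}\left(1-\frac{\Vol{B_r(x)}}{v_\kappa(r)}\right)\ \ge\ 0,
\]
where $v_\kappa(r)$ is the volume of an $r$-ball in the $n$-dimensional space form of curvature $\kappa$ and nonnegativity is Bishop's inequality. Two soft facts: (a) for a fixed smooth metric $f_r\to scal-n(n-1)\kappa$ uniformly on compact sets as $r\to0$, from the classical expansion $\Vol{B_r(x)}=v_\kappa(r)\big(1-\tfrac{scal(x)-n(n-1)\kappa}{6(n+2)}r^2+o(r^2)\big)$; and (b) for fixed $r$, $f_r$ depends continuously on the space under non-collapsed Gromov--Hausdorff convergence, whence $\int_{B_\rho(p_i)}f_r^{(i)}\,\operatorname d vol_{g_i}\to\int_{B_\rho(p)}f_r\,\operatorname d vol_g$ for each $\rho$ (uniform convergence of $f_r^{(i)}$ on compacts plus the weak volume convergence above). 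Together these give $\lim_{r\to0}\lim_{i\to\infty}\int_{B_\rho(p_i)}f_r^{(i)}\,\operatorname d vol_{g_i}=\int_{B_\rho(p)}(scal-n(n-1)\kappa)\,\operatorname d vol_g$ for every $\rho$.

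The crux is to compare $\mu_i$ with $f_r^{(i)}\,\operatorname d vol_{g_i}$ \emph{uniformly in $i$}: for a.e.\ $\rho<1$,
\[
\Big|\,\mu_i\big(B_\rho(p_i)\big)-\int_{B_\rho(p_i)}f_r^{(i)}\,\operatorname d vol_{g_i}\,\Big|\ \le\ \omega(r),\qquad \omega(r)\to0\ \text{ as }r\to0,\ \text{uniformly in }i.
\]
Granting this, sending first $i\to\infty$ and then $r\to0$ identifies $\mu(B_\rho(p))$ with $\int_{B_\rho(p)}(scal-n(n-1)\kappa)\,\operatorname d vol_g$ for a.e.\ $\rho$, hence $\mu=(scal-n(n-1)\kappa)\,\operatorname d vol_g$. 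This step is exactly where the lower \emph{sectional} bound is used. The mechanism is a Riccati/Jacobi-field comparison: under $\sec\ge\kappa$ the radial volume density of $\exp_x$ obeys two-sided estimates that express the ball deficit $v_\kappa(r)-\Vol{B_r(x)}$ as $r^2$ times a spatial average of $scal-n(n-1)\kappa$ over a concentric ball of radius $\asymp r$, modulo a lower-order remainder; integrating in $x$, applying Fubini, and absorbing the remainder and the thin annulus $B_{\rho+r}(p_i)\setminus B_\rho(p_i)$ against the uniform mass bound $\mu_i(B_2(p_i))\le C(n,\kappa)$ yields $\omega(r)$. Equivalently, the assertion is that $\{\mu_i\}$ has no concentration of mass on a lower-dimensional set when the non-collapsed limit is smooth, so that $\mu$ is absolutely continuous with the stated density.

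The main obstacle is this uniform modulus $\omega(r)$. With only a \emph{Ricci} lower bound scalar curvature can concentrate on a codimension-$2$ set — this is the conjectural $\theta\,\operatorname d\cH^{n-2}$ term of Conjecture~\ref{c:measure conv} — so one genuinely needs Petrunin's quantitative comparison for $\sec\ge\kappa$, most cleanly via the integral-geometric identity expressing $\int scal$ through integrals of $\Ric(\gamma',\gamma')$ along geodesics together with the Riccati comparison, in order to turn the smallness of the limiting ball deficits (forced by the smooth non-collapsed limit through volume convergence) into the vanishing of any singular part of $\mu$. Everything else — non-collapsing, weak convergence of volume measures, continuity of $f_r$, and the routine handling of exceptional radii and of $\partial B_1(p)$ — is standard.
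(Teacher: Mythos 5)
The paper does not prove Theorem~\ref{t:cont.sec}; it is quoted from Petrunin~\cite{Pet03}, so there is no in-paper argument to compare against, and I have to judge your outline on its own. The set-up is fine: the lower bound $scal_i\ge n(n-1)\kappa$ makes $\mu_i$ nonnegative, Petrunin's bound (Theorem~\ref{t:int.scal}) gives a uniform mass bound, and non-collapsed GH convergence to a smooth $n$-manifold does give convergence of ball volumes, hence convergence of your ball-deficit quantity $f_r^{(i)}$ for each fixed $r$ and the pointwise limit $f_r\to scal-n(n-1)\kappa$ on the smooth limit. All of this I accept.

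The proposal breaks down at exactly the place you flag: the uniform-in-$i$ modulus $\omega(r)$ in $\big|\mu_i(B_\rho)-\int_{B_\rho}f_r^{(i)}\big|\le\omega(r)$. You attribute it to a ``two-sided'' Riccati/Jacobi estimate under $\sec\ge\kappa$, but a lower sectional (or Ricci) bound yields only a one-sided Bishop/Rauch estimate on the exponential Jacobian; there is no lower bound on the Jacobian without an upper curvature bound. The Taylor expansion $\Vol{B_r(x)}=v_\kappa(r)\big(1-\tfrac{scal(x)-n(n-1)\kappa}{6(n+2)}r^2+o(r^2)\big)$ has a remainder that is controlled by covariant derivatives of the curvature tensor, which are in no way uniformly bounded by the hypotheses; Fubini/Liouville integration kills the odd $\nabla\Ric$ term but the $r^4$ term still involves $|\Rm|^2$-type contributions with no a priori $L^1$ bound, and in the generality claimed the asserted averaged identity is simply not a formal consequence of the Riccati equation. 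Making the interchange of limits $\lim_{i}\lim_{r}=\lim_{r}\lim_{i}$ legitimate is precisely the mathematical content of Petrunin's theorem, not a ``soft fact''; asserting it without proof leaves the argument circular, since the same mechanism that would rule out concentration of $\mu_i$ is what the step is supposed to establish. (For what it's worth, Petrunin's paper takes a genuinely different route, approximating the manifolds by polyhedral metrics and tracking the combinatorial curvature, rather than a Minkowski-content/ball-deficit argument.) So the reduction to a single quantitative lemma is a clean way to structure the problem, but as written the core lemma is unproved and I do not think the sketched Jacobi-comparison mechanism can prove it as stated.
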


\begin{theorem}[Lebedeva-Petrunin \cite{Pet22}]\label{t:Pet22}
  Let $\{(M_i,g_i)\}$ be a sequence of $n$-dimensional Riemannian manifolds with $\sec_{M_i}\ge \kappa$ and $M_i\overset{d_{GH}}\longrightarrow X$ without collapsing, then the curvature tensors of $M_i$ weakly converge to a measure-valued tensor ${\operatorname R}\, \operatorname d \cH^n + \Phi \,\operatorname d \cH^{n-1} +\theta \,\operatorname d \cH^{n-2}$ on $X$, where $\theta(x)=\theta_{n-2}(x)$ is defined as in Conjecture \ref{c:int-sing}. 
\end{theorem}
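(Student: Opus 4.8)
The plan is to combine the intrinsic DC-structure of Alexandrov spaces with Petrunin's scalar curvature bound, Theorem~\ref{t:int.scal}. Since $X$ is a non-collapsed Gromov--Hausdorff limit of $n$-manifolds with $\sec\ge\kappa$, Burago--Gromov--Perelman gives $X\in\Alex^n(\kappa)$, and by Perelman's stability theorem $M_i$ is homeomorphic to $X$ for large $i$. By the work of Perelman and Perelman--Petrunin, off a set of Hausdorff dimension $\le n-2$ the space $X$ admits a \emph{DC-atlas}: charts in which the distance functions, and hence the metric coefficients $g_{ij}$, are differences of semiconcave functions. Thus each $g_{ij}$ lies in $BV^2_{loc}$, its distributional Hessian $\partial^2 g_{ij}$ is a signed Radon measure, and the Riemann tensor of $X$ is a well-defined measure-valued tensor $\Rm_X$ assembled from the $\partial^2 g_{ij}$ (the remaining terms, quadratic in $\partial g_{ij}$, are bounded and contribute nothing singular). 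By the structure theory for $BV$-type functions, $\partial^2 g_{ij}$ decomposes into a part absolutely continuous with respect to $\cH^n$, a jump part concentrated on an $(n-1)$-rectifiable set, and a Cantor part; the whole argument reduces to identifying these three pieces and showing $\Rm_i\,\operatorname d\mathrm{vol}_{g_i}\rightharpoonup\Rm_X$.

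For the identification I would argue as follows. On the $C^2$-regular part $\cR\setminus\cKS(X)$ the metric $g_X$ is locally $C^{1,1}$ (indeed of class $W^{2,p}$ for all $p$), with curvature tensor in $L^1_{loc}(\cH^n)$, so there the absolutely continuous part equals ${\operatorname R}\,\operatorname d\cH^n$ with ${\operatorname R}$ the pointwise Riemann tensor of $g_X$ (this is the local version of Theorem~\ref{t:cont.sec}). The jump part sits exactly on $\cKS(X)$, which by Theorem~A is $(n-1)$-rectifiable with $\cH^{n-1}(\cKS_\epsilon(X)\cap B_r)\le\epsilon^{-1}c(n,\nu)r^{n-2}$; blowing up at an $\cH^{n-1}$-generic point of $\cKS(X)$ one sees two $C^{1,\alpha}$ half-metrics glued along a common boundary hypersurface with matching induced metrics but a jump in the second fundamental form, and that jump is the density $\Phi$, which lies in $L^1_{loc}(\cH^{n-1})$ by the same measure bound. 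Finally the Cantor part is pinned down by blowing up at $\cH^{n-2}$-a.e.\ point of $\cS^{n-2}(X)$: the tangent cone there is $\dR^{n-2}\times C(\Sigma^1_x)$ with $C(\Sigma^1_x)$ a flat two-dimensional cone, whose apex carries the concentrated Gauss--Bonnet curvature $\theta_{n-2}(x)$ along $\dR^{n-2}\times\{0\}$. Since $\dim_\cH\cS^{n-3}(X)\le n-3$ and (next paragraph) the curvature mass scales like $r^{n-2}$, no mass survives on strata below $\cS^{n-2}(X)$; so the Cantor part is precisely $\theta_{n-2}\,\operatorname d\cH^{n-2}$, with $\theta_{n-2}\in L^1_{loc}(\cH^{n-2})$ in this setting by Petrunin's work, as recalled in the abstract.

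For the convergence, the key is a uniform local bound $\int_{B_r}|\Rm_i|\,\operatorname d\mathrm{vol}_{g_i}\le c(n,\kappa,\nu)r^{n-2}$ for $B_r\subseteq B_1$. Subtracting from $\Rm_i$ the algebraic curvature tensor of constant curvature $\kappa$ built from $g_i$, the difference has all sectional curvatures $\ge0$; on the finite-dimensional closed cone of algebraic curvature tensors with $\sec\ge0$ the scalar curvature is a linear functional vanishing only at the origin (if $\sec\ge0$ and $\mathrm{scal}=0$ then the tensor is zero), so by homogeneity and compactness of its unit cross-section $|\Rm|\le C(n)\,\mathrm{scal}(\Rm)$ there. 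Hence $|\Rm_i|\le C(n)(\mathrm{scal}_i+|\kappa|)$ pointwise, and applying Theorem~\ref{t:int.scal} on balls after rescaling, together with the Bishop--Gromov bound on $\Vol{B_r}$, gives the claimed estimate. Pushing the measure-valued tensors $\Rm_i\,\operatorname d\mathrm{vol}_{g_i}$ to $X$ along the Gromov--Hausdorff maps, their local masses are uniformly bounded, so a subsequence converges weakly-$*$ to a measure-valued tensor $T$. Lifting the DC-atlas to the $M_i$ via the stability theorem and checking that the DC-coordinates and the $g_i$ converge (in $C^0$, with $\partial^2 g_i$ converging weakly as measures) to the DC-structure of $X$, one identifies $T=\Rm_X$; combined with the previous paragraph, $T={\operatorname R}\,\operatorname d\cH^n+\Phi\,\operatorname d\cH^{n-1}+\theta_{n-2}\,\operatorname d\cH^{n-2}$.

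I expect the main obstacle to be the identification of the Cantor and jump parts, that is, proving that $\partial^2 g_X$ has no singular behavior worse than the codimension-$2$ cone-angle concentration and that its codimension-$1$ part is a genuine $L^1$ surface density rather than something wilder. This is exactly where the lower curvature bound is indispensable: it forces the rectifiability and cone-splitting of $\cKS(X)$ and $\cS^{n-2}(X)$ (through Theorem~A and the Li--Naber stratification) and it forces the precise $r^{n-2}$ scaling that rules out mass leaking to lower-dimensional strata. A secondary difficulty is that $T$ is tensor-valued, not merely a scalar measure, so near $\cS^{n-2}(X)$ one must track how the tangent-space data (parallel transport and the splitting $\dR^{n-2}\times C(\Sigma^1_x)$) behave under the convergence; doing this carefully is what glues the three pieces into a single measure-valued tensor and is the technical heart of the argument.
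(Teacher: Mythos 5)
This statement is not proved in the paper: it is quoted verbatim, with attribution, as a result of Lebedeva--Petrunin \cite{Pet22}, and the paper uses it only for motivation. There is therefore no ``paper's own proof'' to compare against, and a reviewer of this manuscript would not expect you to supply one.

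That said, your sketch does track the strategy of \cite{Pet22} in outline --- DC coordinates in which the $g_{ij}$ are $BV^2$, decomposition of $\partial^2 g_{ij}$ into absolutely continuous, jump and Cantor parts, and a uniform local bound obtained by dominating $|\Rm_i|$ by $\mathrm{scal}_i + |\kappa|$ via the closed cone of curvature operators with $\sec\ge 0$ and then invoking Theorem~\ref{t:int.scal}. The cone argument is sound. But two of your identification steps assert more than is known, and in fact more than the present paper allows. First, you claim that on $X\setminus\cKS(X)$ the metric is $C^{1,1}$ (even $W^{2,p}$) and that the absolutely continuous part there is the pointwise Riemann tensor; the paper explicitly flags, just after Theorem~B, that it is \emph{unknown} whether $X\setminus\cKS(X)$ is smooth away from an $\cH^{n-1}$-null set, so this cannot be assumed. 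Second, you claim the jump part ``sits exactly on $\cKS(X)$''; the paper states, in the discussion surrounding Theorem~\ref{t:Pet22}, that the relation between $\Phi\,\operatorname d\cH^{n-1}$ and the intrinsic geometry of $X$ (in particular its relation to $\cKS(X)$) is precisely what \emph{remains unclear} from \cite{Pet22}. Note also that Theorem~\ref{t:Pet22} itself does not identify the support of $\Phi$ with any intrinsically defined set; it only asserts the existence of the decomposition. Leaning on Theorem~A to establish rectifiability of $\cKS(X)$ is both unnecessary (the $(n-1)$-rectifiability of the jump set of a $BV$ function is automatic from $BV$ structure theory) and anachronistic, since Theorem~A belongs to the present paper and postdates \cite{Pet22}. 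In short: the skeleton is plausible, but the two key identifications you rely on are exactly the open questions the paper is raising, not facts you may invoke.
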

Theorem \ref{t:int.scal} and Theorem \ref{t:Pet22} imply that Conjecture \ref{c:int-sing} is true for $k=n-2$ and $X$ being a non-collapsed limit of manifolds with lower sectional curvature bound. When comparing Theorem \ref{t:Pet22} with Theorem \ref{t:cont.sec}, we see that as a limit, $\Phi$ should be supported on the set of non-smooth points. However, this is unknown based on \cite{Pet22}. It remains unclear the relation between the limit measure $\Phi \, \operatorname d \cH^{n-1}$ and the underlying geometry of the limit space $X$. For instance, one can ask whether $\Phi$ can be defined on the singular space without or independent on the smoothing procedure $M_i\to X$. In our case, the $C^2$-singularity function $\cK(x)$ solely depends on the metric $X$, which also measures the bending of the space near $x$. This indicates that $\cK \,\operatorname d \cH^{n-1}$ is equivalent to $\Phi \,\operatorname d \cH^{n-1}$, and this is true in the simple doubling examples. In light of this, $\cK \, \operatorname d \cH^{n-1}$, as well as $\theta_{n-2} \, \operatorname d \cH^{n-2}$, can be viewed as a synthetic curvature measure on the non-smooth sets in Alexandrov spaces. The definition of ${\operatorname R} \, \operatorname d \cH^n$ is not clear to the author at this point, although in the smooth case, ${\operatorname R}$ can be chosen as the scalar curvature.

By introducing the notion of $\operatorname R$, $\cK$ and $\theta$, we can define a synthetic curvature measure on Alexandrov space $X\in\Alex^n(-1)$ as follows:
\begin{align}
	\mu_X = {\operatorname R}\,\operatorname d \cH^n + \cK\,\operatorname d \cH^{n-1} +\theta\,\operatorname d \cH^{n-2}.
\end{align}

The first question one may ask is if $\mu_X$ is locally finite. 
\begin{conjecture}
	If $X\in\Alex^n(-1)$, then $\int_{B_1}\mu_X\le C(n)$. 
\end{conjecture}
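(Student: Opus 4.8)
The plan is to split $\mu_X = {\operatorname R}\,\operatorname{d}\cH^n + \cK\,\operatorname{d}\cH^{n-1} + \theta\,\operatorname{d}\cH^{n-2}$ into its three summands — carried by sets of Hausdorff dimensions $n$, $n-1$, $n-2$ respectively (the $n$-regular part, the $C^2$-singular set $\cKS(X)$, and the metric stratum $\cS^{n-2}(X)$) — and to bound $\int_{B_1}$ of each separately; local finiteness of $\mu_X$ is then the conjunction of the three. Two of the three bounds are already available, at least under auxiliary hypotheses, so the real work concentrates on the term ${\operatorname R}\,\operatorname{d}\cH^n$.

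For the $\cK$-term, $\int_{B_1}\cK\,\operatorname{d}\cH^{n-1}$ is exactly Theorem A with $r=1$: since $\cKS(X)=\bigcup_m\cKS_{1/m}(X)$ has $\sigma$-finite $\cH^{n-1}$-measure by the Hausdorff-measure estimate there, the integral is the monotone limit of $\int_{\cKS_{1/m}(X)\cap B_1}\cK\,\operatorname{d}\cH^{n-1}$, each bounded uniformly. The only discrepancy with the conjecture is that Theorem A produces a constant $c(n,\nu)$ depending on a non-collapsing bound $\operatorname{Vol}(B_1(p))\ge\nu$; to recover a dimension-only constant one must either allow $C(n)$ in the conjecture to depend on $\nu$, or show that regions of small volume contribute negligibly — which should follow by covering $B_1$ at an appropriate scale and rescaling (the measure $\cK\,\operatorname{d}\cH^{n-1}$ is $(n-2)$-homogeneous under dilation of the metric, matching the $r^{n-2}$ in Theorem A), together with the structure theory of (almost) collapsed Alexandrov spaces, along whose collapsing directions the $C^2$-singularity degenerates. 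For the $\theta$-term, $\int_{B_1}\theta_{n-2}\,\operatorname{d}\cH^{n-2}<C(n)$ is precisely the case $k=n-2$ of Conjecture \ref{c:int-sing}; note this is \emph{not} a consequence of a dimension bound on $\cS^{n-2}(X)$, whose $\cH^{n-2}$-measure may be infinite. It is known when $X$ is a non-collapsed Gromov–Hausdorff limit of $n$-manifolds with $\sec\ge -1$: Theorem \ref{t:int.scal} bounds $\int_{B_1}\operatorname{scal}_i\,\operatorname{d}\operatorname{vol}_{g_i}$ along the approximating sequence (and the lower bound $\operatorname{scal}_i\ge n(n-1)\kappa$ with Bishop's volume bound upgrades this to a uniform total-variation bound), while Theorem \ref{t:Pet22} identifies the weak-$\ast$ limit of the curvature measures as ${\operatorname R}\,\operatorname{d}\cH^n+\Phi\,\operatorname{d}\cH^{n-1}+\theta_{n-2}\,\operatorname{d}\cH^{n-2}$; lower semicontinuity of total variation under weak-$\ast$ convergence then passes the bound to $X$. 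Extending this to all of $\Alex^n(-1)$ needs either density of such approximating sequences (itself open) or an intrinsic argument.

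The ${\operatorname R}$-term is the crux and the main obstacle, for the simple reason that ${\operatorname R}$ has not yet been defined on a general Alexandrov space. One conservative route is to \emph{define} ${\operatorname R}\,\operatorname{d}\cH^n$ as the $\cH^n$-absolutely-continuous part of the weak-$\ast$ limit of $\operatorname{scal}_i\,\operatorname{d}\operatorname{vol}_{g_i}$ over smoothings with $\sec\ge -1$; then the bound is immediate from Theorem \ref{t:int.scal} and lower semicontinuity, but the definition is not intrinsic and is contingent on the existence of smoothings. The more ambitious route — and, following this paper's construction of $\cK$, the more natural one — is to define ${\operatorname R}(x)$ at $n$-regular points by a second-order average of the metric (for instance from the rate of deviation of volumes of small balls or spheres from their Euclidean values, or as a distributional Laplacian of a suitable distance-squared function), agreeing with scalar curvature at $C^2$-points, and then to run Petrunin's semiconcavity / gradient-flow machinery — the same that underlies Theorem \ref{t:int.scal} and Theorem A — directly on $X$ to bound $\int_{B_1}{\operatorname R}^+\,\operatorname{d}\cH^n$, controlling the negative part via ${\operatorname R}\ge n(n-1)\kappa$ on the regular set. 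The hard part is exactly this last step: Petrunin's proof of Theorem \ref{t:int.scal} integrates curvature against developments of geodesics in an essential use of the smooth structure, and transplanting it to $X$ while simultaneously checking that the measure produced is the same one appearing as the $\cH^n$-part of the smoothing limit (so that $\mu_X$ is well defined) is where essentially all the difficulty lies. Once the three bounds are in hand, summing them yields $\int_{B_1}\mu_X\le C(n)$ for $\kappa=-1$.
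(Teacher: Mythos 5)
The statement you are asked to prove is labelled a \emph{conjecture} in the paper, and the paper offers no proof of it; indeed the author explicitly remarks immediately afterward that ``the definition of ${\operatorname R}\,\operatorname{d}\cH^n$ is not clear to the author at this point,'' so the measure $\mu_X$ is not even fully defined on a general Alexandrov space. There is therefore no proof in the paper to compare against, and any submission presenting a complete argument would have to be in error.

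Your write-up is honest on exactly this point, and as a roadmap it is accurate. You correctly observe that the $\cK$-term is controlled by Theorem A but only with a constant $c(n,\nu)$ depending on a non-collapsing parameter $\nu$ that the conjecture does not assume; your proposed fix via rescaling and collapsing structure theory is plausible-sounding but is not carried out and is itself a nontrivial open step (Theorem A is stated and proved only under $\operatorname{Vol}(B_1(p))\ge\nu>0$). You correctly identify the $\theta$-term as the $k=n-2$ case of Conjecture \ref{c:int-sing}, known in the paper only for non-collapsed Gromov--Hausdorff limits of manifolds with sectional curvature bounded below (via Theorems \ref{t:int.scal} and \ref{t:Pet22}), and still open for general $X\in\Alex^n(-1)$ since not every Alexandrov space is known to be such a limit. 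And you correctly single out the ${\operatorname R}$-term as the crux: without an intrinsic definition of ${\operatorname R}$, neither the conjecture's content nor any bound on its contribution can be established, and the two routes you sketch (absolutely continuous part of a smoothing limit, or a second-order volume defect) both hinge on machinery that has not been built. In short, your ``proof'' is actually a correct diagnosis that the statement is open; none of the three required bounds is established unconditionally, and the third term is not yet a well-defined object. That assessment matches the paper's own stance.
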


On can also ask convergence questions similar to Conjecture \ref{c:measure conv}.

\begin{conjecture}
	If $X_i\in\Alex^n(-1)$ Gromov-Hausdorff converge to $X$ without collapsing, then $\mu_{X_i}\to \mu_X$. 
\end{conjecture}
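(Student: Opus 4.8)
The natural strategy is to split $\mu_{X_i}\to\mu_X$ into a convergence statement for each of the three strata, and then to control the transfer of curvature mass between strata using the scale-invariant estimates of Theorem~A together with their (still conjectural) analogues for $\theta$ and $\operatorname R$.

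\textbf{Step 1: compactness and the common homogeneity.} Theorem~A gives $\int_{B_r}\cK_i\,\operatorname d\cH^{n-1}\le c(n,\nu)\,r^{n-2}$ uniformly in $i$. Assuming in addition the bound $\int_{B_r}\theta_i\,\operatorname d\cH^{n-2}\le c(n)\,r^{n-2}$ (Conjecture~\ref{c:int-sing} for $k=n-2$, known for non-collapsed limits of manifolds with lower sectional curvature by Theorems~\ref{t:int.scal} and~\ref{t:Pet22}) and a Yau--Petrunin-type bound $\int_{B_r}|\operatorname R_i|\,\operatorname d\cH^{n}\le c(n)\,r^{n-2}$ on the regular part, the total variations $|\mu_{X_i}|$ become uniformly locally finite. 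Hence a subsequence converges weakly-$*$ to a signed Radon measure $\mu$ on $X$ with $|\mu|(B_r(x))\le c\,r^{n-2}$ for every ball, i.e.\ $\mu$ carries a uniform $(n-2)$-dimensional upper density bound. The content of the conjecture is then that $\mu=\mu_X$ regardless of the subsequence; note that $\operatorname R\,\operatorname d\cH^n$, $\cK\,\operatorname d\cH^{n-1}$ and $\theta\,\operatorname d\cH^{n-2}$ all scale like $r^{n-2}$, which is the Gauss--Bonnet heuristic underlying this whole circle of conjectures.

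\textbf{Step 2: matching on each stratum.} On the regular set --- the manifold points at which the metric is locally $C^2$ --- one combines the $C^{1,\alpha}$ / harmonic-coordinate regularity of Alexandrov spaces on their regular part (Otsu--Shioya, Kuwae--Machigashira--Shioya) with Cheeger--Colding-type interior estimates to obtain $\operatorname R_i\,\operatorname d\cH^n\to\operatorname R\,\operatorname d\cH^n$ there, while $\cK_i$ and $\theta_i$ vanish in the limit. Near the $(n-1)$-dimensional $C^2$-singular stratum one needs a stability statement for $\cK$, namely $\cK_i\,\operatorname d\cH^{n-1}\to\cK\,\operatorname d\cH^{n-1}$; this should fall out of the local structure theory and blow-up analysis used to prove Theorem~A, together with the $(n-1)$-rectifiability of $\cKS(X)$. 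Near the $(n-2)$-dimensional top stratum $\cS^{n-2}(X)\setminus\cS^{n-3}(X)$ one invokes an Alexandrov-space version of the Lebedeva--Petrunin convergence of Theorem~\ref{t:Pet22}. Weak lower semicontinuity, together with the geometric lower bounds built into $\theta$ and $\cK$, gives one inequality between $\mu$ and $\mu_X$ on each stratum essentially for free; the reverse inequality is the real issue.

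\textbf{Step 3: no concentration between strata --- the main obstacle.} The crux is to exclude the scenarios in which curvature mass drops dimension in the limit: $\operatorname R_i\,\operatorname d\cH^n$ concentrating onto an $(n-1)$- or $(n-2)$-set, or $\cK_i\,\operatorname d\cH^{n-1}$ concentrating onto an $(n-2)$-set, and conversely the spontaneous creation of mass on a stratum of $X$ out of more regular pieces of the $X_i$. Here the uniform $r^{n-2}$ density bounds from Step~1 should drive a quantitative-stratification / neck-decomposition argument in the spirit of Cheeger--Naber and Li--Naber: at any point and scale at which the rescaled $X_i$ is Gromov--Hausdorff close to $\RR^k\times C(\Sigma)$, the relevant part of $\mu_{X_i}$ is forced to concentrate along the $\RR^k$-spine, and the $(n-2)$-dimensional density ceiling caps how much mass any $k$-dimensional piece can carry, so that only the ``correct'' amount survives. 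Converting this heuristic into matching upper bounds on each stratum is where the genuine difficulty lies, and it ultimately requires an intrinsic, stratum-by-stratum definition of $\operatorname R$ (currently unavailable, as the author notes) together with the full strength of Conjecture~\ref{c:int-sing} and of the local-finiteness conjecture for $\mu_X$.
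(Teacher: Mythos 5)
This statement is a \emph{conjecture} in the paper, not a proved result, so there is no ``paper's own proof'' to compare against. The paper explicitly treats it as open and offers only Theorem~B as a partial result, valid precisely when the limit $X$ is a smooth manifold (in which case both singular-stratum terms on the right side of $\mu_X$ must vanish, so the convergence collapses to showing the lower-order terms tend to zero). Your proposal correctly recognizes the statement as open and stops short of claiming a proof, which is the honest outcome here; but it is worth being clear that none of your three steps is actually closed, so this is a research roadmap rather than a proof attempt.

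Two further points. First, your Step~1 quietly assumes a uniform bound $\int_{B_r}|\operatorname R_i|\,\operatorname d\cH^n\le c(n)\,r^{n-2}$ on the regular parts of the $X_i$. This is not available: on general Alexandrov spaces the function $\operatorname R$ is not even defined (the author states this explicitly), and there is no known integral bound for it, so already the weak-$*$ compactness of $\mu_{X_i}$ is conjectural rather than a starting point. Second, the strategy you outline (quantitative stratification and neck decomposition in the style of Cheeger--Naber, matching measures stratum by stratum) is genuinely different from the route the paper takes for the partial result it does prove: Theorem~B is proved by lifting $\epsilon$-frames from the smooth limit $M$ to the $X_i$, using upper semicontinuity of $\Theta$ along convergence, the integral identity (\ref{l:int.formula.K.e4}), and Theorem~A to control the removed balls around the centers of the lifted convex annuli. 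If you want to make progress in the direction of the full conjecture, it would be more in the spirit of the paper to ask how much of that frame-lifting argument survives when $M$ is replaced by a singular limit $X$, rather than to import a Ricci-limit-space-style stratification, which has not been set up in the Alexandrov category with the required quantitative bounds.
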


Regarding this conjecture, we have the following partial result. 


\begin{Theorem B}\label{t:K.cont.smooth}
	Let $X_i\in\Alex^n(-1)$ and $n\ge 2$. If $(X_i,p_i)$ Gromov-Hausdorff converges to a smooth manifold $(M,p)$ without collapsing. If $\partial M=\varnothing$ or the doubling of $M$ is smooth, then  $\displaystyle\lim_{i\to\infty}\int_{B_1(p_i)} \cK_i\, \operatorname d\cH^{n-1}=0$ and $\displaystyle\lim_{i\to\infty}\int_{B_1(p_i)} \theta_i\, \operatorname d\cH^{n-2}=0$. 
\end{Theorem B}

We would like to point out that it is unknown if $X\setminus\cKS(X)$ is a smooth manifold away from a zero $\cH^{n-1}$-measure  subset. According to Theorem \ref{t:cont.sec}, this is the gap to improve the conclusion of Theorem B to the following convergence result
	 \begin{align}
		 	\displaystyle\lim_{i\to\infty}\int_{B_1(p_i)} {\operatorname R}_i\,\operatorname d \cH^n + \cK_i\,\operatorname d \cH^{n-1} +\theta_i\,\operatorname d \cH^{n-2}=\int_{B_1(p)} {\operatorname R}\,\operatorname d \cH^n.
		 \end{align}

This paper is organized as follows: In Section \ref{s:defn}, we define $\cK(x)$ and illustrate it with an example. In Section \ref{s:annuli}, we establish a new covering theorem for Alexandrov spaces. The theorem states that any compact set can be covered by a controlled number of convex annuli with centers on the given compact set (see Theorem \ref{t:cvx-cover}). We will apply this covering theorem recursively to form an annulus stratification of $X$. Furthermore, by establishing Lemma \ref{t:adj-angle}, we can adjust the intersection angles of the strata to be less than but close to $\frac\pi2$. In Section \ref{s:mono}, we establish the monotonic formula Corollary \ref{c:mono.corner}, which implies an upper bound of the integration of $\cK$ on each stratum (Lemma \ref{l:int.formula.K}), in terms of the derivatives of the volume of level sets. In Section \ref{s:sum.sing}, we prove Theorem A by integrating the monotonic formulas along with the stratification structures established in Section \ref{s:annuli}. We then prove Theorem B using Theorem A and the covering theorems in Section \ref{s:annuli}. In the Appendix, we give a simplified proof for Petrunin's Theorem \ref{t:int.scal}, using the covering techniques established in Section \ref{s:annuli}.

The author would like to thank Feng Wang and Jian Ge for fruitful discussions. 

\section{Definitions and Examples}\label{s:defn}

We define $\cK(x)$ in this section. Since we will integrate it with respect to the $(n-1)$-dimensional Hausdorff measure, it suffices to define it away from a set of co-dimension 2. 
Let $X\in\Alex^n(\kappa)$ without boundary. If $X$ has non-empty boundary, we will consider the doubling of $X$. A subset $W\subseteq X$ is said to be locally convex at a point $x$ if there exists $r>0$ so that $B_r(x)\cap W$ is convex. According to Theorem \ref{t:cvx-cover}, all but a finite number of points $x\in B_1$ admits a subset $W$ so that $x\in\partial W$ and $W$ is locally convex at $x$. The tangent cones $T_x(W)$ at these points are metric cones with boundaries. If $T_x(W)=\dR^{n-2}\times C([0,\beta])$, $0<\beta\le\pi$, we define the singularity angle $\Theta(x, \partial W)=\pi-\beta$, otherwise we let $\Theta(x, \partial W)=\pi$. Although $\Theta(x, \partial W)$ is determined by $W$ but not by $\partial W$ itself, we still use the latter in our notion. This is to highlight $x\in\partial W$ and that the singularity direction is normal to $\partial W$. We would like to point out that $\Theta(x,\partial W)$ only reflects the singularity of $W$ with respect to its intrinsic metric, but not the singularity of $X$. In particular, having $\Theta(x,W)>0$ does not imply $x\in\cS^{n-2}(X)$. For example, a triangle plane $W\subset\dR^2$ gives $\Theta(x,\partial W)>0$ at the vertex $x\in W$, but there are no singular points in $\dR^2$.

Let $W_t=\{x\in W\colon d(x,\partial W)\ge t\}$ be the sup-level set and $\partial W_t=\{x\in W_t\colon d(x,\partial W)= t\}$. Let $\phi_t(x)$ be the re-parameterized gradient flow (level set flow) of $d_{\partial W}|W$ so that $\phi_t(x)\in\partial W_t$. Since $W$ is locally convex at $x$, we have $\displaystyle\liminf_{t\to 0^+}T_{\phi_t(x)}(W_t)\ge T_x(W)$. That is, there is a distance non-increasing map from any Gromov-Hausdorff limit of $T_{\phi_t(x)}(W_t)$ as $t\to 0^+$ to $T_x(W)$. This implies
$$\displaystyle\limsup_{t\to 0^+}\Theta (\phi_t(x),\,\partial W_t)\le \Theta (x,\partial W).$$ 
In particular, if $\Theta (x,\partial W)=0$, then $\displaystyle\lim_{t\to 0^+}\Theta (\phi_t(x),\,\partial W_t)=0$. 
For $x\in\partial W$, we define the following directional singularity measure in the sense of distribution 
\begin{align}
	\cK(x, \partial W) &= \renewcommand{\arraystretch}{1.5}
	\left\{\begin{array}{@{}l@{\quad}l@{}}
		\displaystyle\limsup_{t\to 0^+}\frac{\Theta (\phi_t(x),\,\partial W_t)}{t}, 
		& \text{ if } \Theta(x,\partial W)=0,
		\\
		0, & \text{otherwise.}
	\end{array}\right.
	\label{s:sum.sing.e4}
\end{align}

If $\Theta(x,\partial W)=0$ and the metric in a neighborhood of $x$ is $C^2$, then $W$ can be chosen so that $\partial W_t$ are smooth for $t>0$ small. Then we have $\Theta (\phi_t(x),\,\partial W_t)\equiv 0$ and $\cK(x,\partial W)=0$. This shows that for points $x$ satisfying $\Theta (x,\,\partial W)>0$, the metric is not locally $C^2$. See Example \ref{e:2disk} for which $\cK(x,\partial W)$ is supported on a co-dimension 1 subset. 


To capture the singularities along all directions, we shall compute $\cK(x,\partial W^i)$ for $n$ almost orthogonal subsets $W^i$, where $x\in\partial W^i$ for every $i$. By Remark \ref{r:non-frame}, for any $\epsilon>0$, there is a co-dimension 2 subset $\mathcal A$, so that for any $x\notin\mathcal A$, there is a sequence of subsets $W^i$, $i=1,2,\cdots, n$ so that
\begin{enumerate}
	\renewcommand{\labelenumi}{(\roman{enumi})}
	\item $x\in\partial W^i$, 
	\item $W^i$ is locally convex at $x$. 
	\item $\{W^i\}$ are $\epsilon$-orthogonal.
\end{enumerate}
Such a sequence $\{W^i\}$ is called an $\epsilon$-frame (see Definition \ref{d:frame} for details). Let 
\begin{align}
	\cK(x)=\inf_{\{W^1, W^2, \dots, W^n\}}\sum_{i=1}^n\cK(x, \partial W^i),
	\label{d:K.e1}
\end{align}
where the infinitum is taking over all family of frames. 

Note that the value of $\sum_{i=1}^n\cK(x, \partial W^i)$ may depend on the choice of frames. However, in Example \ref{e:2disk}, as a measure, $\sum_{i=1}^n\cK(x, \partial W^i)\, \operatorname d\cH^{n-1}$ is equivalent over different choices of $\epsilon$-frames, provided $0<\epsilon<c(n)$ small. We conjecture that this is the case in general. That is, for $\epsilon$-frames $\{W_1^1, W_1^2, \dots, W_1^n\}$ and $\{W_2^1, W_2^2, \dots, W_2^n\}$ at $x$, it holds that
$$c_1(n)\sum_{i=1}^n\int_{B_1}\cK(x, \partial W^i_1)\, \operatorname d\cH^{n-1}
\le \sum_{i=1}^n\int_{B_1}\cK(x, \partial W^i_2)\, \operatorname d\cH^{n-1}
\le c_2(n)\sum_{i=1}^n\int_{B_1}\cK(x, \partial W^i_1)\, \operatorname d\cH^{n-1}.$$
If this is true, then the infinitum in (\ref{d:K.e1}) can be dropped. Example \ref{e:2disk} also indicates that $\cK(x)$ is equivalent to the Gaussian curvature at $x$ in dimension 2. 

As mentioned earlier, the following example illustrates the co-dimension 1 behavior of $\cK(x)$. It also shows that $\cK(x)$ correctly captures the $C^2$-singularities. 
\begin{example}\label{e:2disk}
	Let $\mathds D$ be a flat disk in $\dR^2$ with radius $r$. Let $X$ be the doubling of $\mathds D$. Let $Z\subset X$ be the glued two copies of $\partial \mathds D$. Construct $M_i$ by smoothing $X$ in a small neighborhood of $Z$, so that $M_i\overset{d_{GH}}\longrightarrow X$ as $i\to\infty$. Note that $X$ is flat away from the 1-dimensional subset $Z$. By the Gauss-Bonnet Theorem, 
	\begin{align}
		\int_{M_i}K_{g_i}\operatorname d vol_{g_i}=4\pi=\int_Z K_x \operatorname d s.
	\end{align} 
	It's easy to see that $K_x=\frac2r$ for every $x\in Z$. This shows that the curvature on $X$ is concentrated on the co-dimension 1 subset $Z$. The curvature measure $K_{g_i}\operatorname d vol_{g_i}$ converges to measure $K_x ds$, supported on $Z$. 
	
	We will show that $K_x=c\cdot\cK(x)$ and $Z=\cKS(X)$. For any $x\in Z$, there is a convex set $W$ so that $x\in\partial W$ and $B_{2\epsilon}(x)\cap\partial W$ is a local geodesic. Let $W_t$ be the sup-level set of the level set flow $\phi_t(x)$, which is defined in terms of $d_{\partial W}|_W$. Let $\delta(t)$ be the intersection angle of $\partial W_t$ and $Z$ at $\phi_t(x)$. Note that $\Theta(t)=\Theta(\phi_t(x), \partial W_t)$ is equal to the exterior angle of $W_t$ at $\phi_t(x)$. Direct computation shows that 
	\begin{align}
		\renewcommand{\arraystretch}{1.2}
		\left\{\begin{array}{@{}l@{\quad}l@{}}
			& r\cos\delta(t)=r\cos\delta(0)+t, 
			\\
			& \delta(t)=\delta(0)-\frac{\Theta(t)}{2}.
		\end{array}\right.
	\end{align}
Then 
 	\begin{align}
 	\cK(x,\partial W_0)=\lim_{t\to 0^+}\frac{\Theta(t)}{t}= \frac2{r\sin\delta(0)}.
 \end{align}
It's clear that $\cK(y,\partial W_0)=0$ for any $y\in B_\epsilon(x)\cap\partial W\setminus\{x\}$. 
Note that $\delta(0)$ can be chosen arbitrarily close to $0$. Thus $\cK(x,\partial W)$ may approach to $\infty$. However, 
\begin{align}
	\int_0^T\cK(x,\partial W_t)dt=\int_0^T\frac2{r\sin\delta(t)}dt=\delta(0)-\delta(T)
\end{align}
is always bounded. One can also see (\ref{l:int.formula.K.e4}) for a generalized version. 

We now choose $W^1$ and $W^2$ so that their intersection angle is $\frac\pi2$. Let  $\delta_i$ be the intersection angle of $\partial W^i$ and $Z$ at $x$, $i=1,2$. We get 
	 \begin{align}
		\cK(x,\partial W^1)+\cK(x,\partial W^2)=\frac2r\left(\frac1{\sin\delta_1}+\frac1{\sin\delta_2}\right).
	\end{align}
Note that $\delta_1+\delta_2=\frac\pi2$. Choosing $\delta_1=\delta_2=\frac\pi4$ to minimize the above sum, we get 
$\cK(x)\le\frac{4\sqrt 2}{r}$.
\end{example}

\section{Covering by Convex Annuli}\label{s:annuli}

We develop a covering technique in this section. Each element in our covering will be an annulus-like region that satisfies a convex property. This technique will be applied recursively to create a stratified covering.

\subsection{Covering by good scale annulus}

Let $\in\Alex^n(-1)$ be an Alexandrov space. Let $p\in X$ and $A_a^b(p)=\{x\in X\colon a\le d(p,x)\le b\}$ denote the close annuli centered at $p$. In particular $A_0^r(p)=B_r(p)$ is the closed geodesic ball and $A_r^r(p)=\{x\in X\colon d(p,x)= r\}$ is the level set. 

\begin{definition}[Good scale annulus]
	An annulus region $A_a^b(p)$ is said to be an $(\epsilon, \sigma)$-good scale annulus if the following hold. 
	\begin{enumerate}
		\renewcommand{\labelenumi}{(\roman{enumi})}
		\item $a\le\sigma b$.
		\item There exists a metric cone $C_{p^*}(\Sigma)$ so that
		\begin{align}
			d_{GH}\Big(B_s(p), \, B_s(p^*)\Big)\le \epsilon s
			\label{defn:bad scale.e1}
		\end{align}
		for every $s\in[\sigma a,\sigma^{-1}b]$.
	\end{enumerate}
\end{definition}

\begin{theorem}[Covering by good scale annuli, Theorem 15.3.14 \cite{Li20}, Li-Naber \cite{LiNab20}]\label{t:ann_cover}
	Let $\epsilon, \sigma >0$ and $n\in\dN$. 
	For any $X\in\Alex^n(-1)$ and closed subset $\Omega\subseteq\bar B_1$, there exists a collection of $(\epsilon, \sigma)$-good scale annuli  $\mathcal C=\Big\{A_{a_i}^{b_i}(p_i)\Big\}$ so that  
	\begin{enumerate}
		\renewcommand{\labelenumi}{(\roman{enumi})}
		\item $\Omega\subseteq \cup_iA_{a_i}^{b_i}(p_i)$,
		\item $p_i\in\Omega$ for every $i$,
		\item $|\mathcal C|<C(n,\epsilon,\sigma)$.
	\end{enumerate}
\end{theorem}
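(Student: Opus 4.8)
The plan is to prove this covering statement by a Vitali-type recursion on scales, using the fact that non-collapsing lower-curvature bounds force almost-metric-cone structure at most scales, together with a volume/packing argument to control the total number of annuli. First I would recall the cone-splitting and almost-rigidity machinery: by the Bishop--Gromov inequality (which holds on $\Alex^n(-1)$) and the $\epsilon$-regularity / metric-cone stability results, for each point $x\in X$ and each scale $s$, the rescaled ball $\tfrac1s B_s(x)$ is $\epsilon$-close to a ball in a metric cone \emph{unless} one is at a ``bad scale,'' and the number of bad scales (in the sense of disjoint decades $[\sigma^{k+1},\sigma^k]$ on which the cone approximation fails) is bounded by a constant $N(n,\epsilon,\sigma)$. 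This is the standard quantitative rigidity input; I would invoke it as a black box (it is essentially Cheeger--Colding / Colding--Naber adapted to Alexandrov spaces, and in this setting follows from \cite{BGP} plus volume monotonicity). The key point is that ``not a bad scale'' at $x$ over a full $\sigma$-band means precisely that $A_{\sigma a}^{\sigma^{-1}b}(x)$ sits inside an $(\epsilon,\sigma)$-good scale annulus centered at $x$.

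Next I would set up the recursion. Start with $\Omega\subseteq \bar B_1$. For each $p\in\Omega$, since the number of bad scales below scale $1$ is at most $N=N(n,\epsilon,\sigma)$, there is a ``good'' band $[a_p, b_p]\subseteq (0,1]$ with $b_p/a_p$ at least a definite factor (say $\sigma^{-2}$, enlarging $N$ if necessary) on which $B_s(p)$ is $\epsilon s$-close to a fixed cone $C_{p^*}(\Sigma)$ for all $s\in[\sigma a_p, \sigma^{-1}b_p]$; this yields an $(\epsilon,\sigma)$-good scale annulus $A_{a_p}^{b_p}(p)$. The subtlety is that different points have good annuli at different \emph{locations} in the scale range, so a single radius will not work for all of $\Omega$. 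To handle this I would stratify $\Omega$ by the ``first good band'': let $\Omega_1$ be the set of $p$ whose good band contains scale $1$ (or, more precisely, whose good band is the top one in $[\sigma^{k_0}, 1]$ for the appropriate fixed $k_0$ depending only on $N$); cover $\Omega_1$ by a controlled number of good scale annuli of comparable size using a standard Vitali $5r$-covering at the outer radius $b_p\sim 1$ (here non-collapsing, via $\Vol(B_1(p))\ge \nu$ implicitly through Bishop--Gromov on $\Alex^n(-1)$ — actually the theorem as stated needs no volume lower bound since the count $C(n,\epsilon,\sigma)$ is allowed to depend on nothing else, so I would instead use the \emph{upper} packing bound from Bishop--Gromov which needs only the curvature lower bound). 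Then pass to $\Omega\setminus\Omega_1$, which by construction consists of points whose good bands all lie below a definite scale, rescale, and repeat. After at most $N+1$ rounds of the recursion every point of $\Omega$ is covered, and at each round the number of annuli added is bounded by a packing constant $P(n,\sigma)$, so $|\cC|\le (N+1)\,P(n,\sigma) = C(n,\epsilon,\sigma)$, with centers on $\Omega$ by construction, and (i)--(iii) follow.

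The main obstacle I anticipate is making the ``first good band'' stratification genuinely finite and location-controlled: a priori the good bands of different points could be interleaved across a continuum of scales, so one must discretize the scale axis into the fixed decades $[\sigma^{k+1},\sigma^k]$, observe that for each $p$ at most $N$ of these decades are bad, hence among any $N+1$ consecutive good-or-bad decades at least one is good, and then organize the recursion so that at step $j$ one only looks at decades in the window $[\sigma^{j(N+1)}, \sigma^{(j-1)(N+1)}]$ — this guarantees each $p$ is ``caught'' at some finite step $j\le (\text{depth})/(N+1)$, but one still needs the depth of the recursion to be bounded, which it is \emph{not} in general (points could have good bands at arbitrarily small scales only). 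The resolution, and the actual content of the argument, is that we do not need to reach the bottom: once we are at a good scale band for $p$ we cover $p$ \emph{and never revisit it}, and the claim is simply that \emph{every} $p$ has \emph{some} good band in some window, which follows because each $p$ has at most $N$ bad decades total, so it has a good decade in the very first window $[\sigma^{N+1},1]$ — hence in fact the recursion terminates after a single step and $|\cC|\le P(n,\sigma)\cdot(\text{number of cone models})$, the remaining book-keeping being the Vitali covering of the sublevel sets of the scale function. I would write this up carefully, as it is the one place where a naive argument gives an unbounded count.
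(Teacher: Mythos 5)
This theorem is not proved in the paper itself: it is imported verbatim from \cite{Li20} (Theorem 15.3.14) and \cite{LiNab20}, so there is no in-paper argument to compare your proposal against; I can only evaluate it on its own terms, and there is a genuine gap in the final assembly.

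Your two ingredients --- a uniform bound $N=N(n,\epsilon,\sigma)$ on the number of bad decades at any one point, plus a Vitali/packing argument --- are the right framework, but the concluding claim that ``the recursion terminates after a single step'' because each $p$ has a good decade in $[\sigma^{N+1},1]$ conflates ``$p$ admits a good band'' with ``$p$ is covered by a good scale annulus centered at $p$.'' A good decade at $p$ at scale $\approx \sigma^k$ furnishes a good scale annulus $A_{a_p}^{b_p}(p)$ with $0<a_p\le\sigma b_p$; this is a thin shell that does \emph{not} contain $p$, nor the inner ball $B_{a_p}(p)$. The only way $p$ lies in an annulus centered at itself is when $a_p=0$, i.e.\ when the cone condition at $p$ holds all the way down to scale $0$; but the bad-scale theorem bounds the \emph{number} of bad decades, not their \emph{positions}, so $p$ can have a bad decade at an arbitrarily small scale and the radius at which $A_0^{b}(p)$ becomes a good scale annulus is not uniformly bounded below. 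A single Vitali round at a uniform scale therefore cannot cover $\Omega$. Recursion into the inner balls is unavoidable, and the nontrivial content of the cited proof is the bookkeeping showing that the total number of annuli across all rounds (which grows multiplicatively, not additively, per round, so your earlier $(N+1)P(n,\sigma)$ should really be $\lesssim P(n,\sigma)^{N+1}$) remains $C(n,\epsilon,\sigma)$ even though centers change between rounds. Two smaller points: your final expression ``$P(n,\sigma)\cdot(\text{number of cone models})$'' is not a well-defined finite constant; and you note but do not resolve why the bad-scale count $N$ can be taken independent of a lower volume bound --- that is a real feature of the Alexandrov setting (as opposed to Ricci-limit spaces) and needs either a citation or an argument rather than an aside.
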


\subsection{Covering by convex annuli}

We will show that a good scale annulus can be foliated by convex level sets. Consequently, the covering of good scale annuli can be adjusted to a covering of convex annuli. We first construct a $\left(-\frac{c(n)}{\epsilon}\right)$-concave function on $(\epsilon,\sigma)$-good scale annulus with $\sigma>0$ small. 

\begin{lemma}[Concave functions on good scale annulus]\label{l:cox-haull}
	There exists a constant $c(n)>0$ so that the following hold. Let $A_{a}^{b}(p)$ be an $(\epsilon, \sigma)$-good scale annulus with $\cH^{n}(B_{b}(p))\ge\nu b^n>0$. If $0<\sigma\le\epsilon^{2n}\nu^2$, then there exists a $\left(-\frac{c(n)}{\epsilon}\right)$-concave function $f$ on $B_{2b}(p)$ so that $f^{-1}(-s)\subseteq A_{(1-4\epsilon)s}^{(1+4\epsilon)s}(p)$ for any $0.5a\le s\le 2b$.
\end{lemma}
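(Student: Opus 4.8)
The plan is to build $f$ by smoothing the distance function from $p$ using Perelman's averaging construction, exploiting the good-scale hypothesis to turn the closeness to the metric cone $C_{p^*}(\Sigma)$ into a quantitative concavity estimate. On a genuine metric cone, the function $s \mapsto -s = -d(p^*, \cdot)$ is $0$-concave along radial directions but only Lipschitz in general, while $-\frac{1}{2}d(p^*,\cdot)^2$ is exactly $1$-concave (indeed affine along rays); the function we actually want, with level sets squeezed into thin annuli $A^{(1+4\epsilon)s}_{(1-4\epsilon)s}(p)$, should behave like $-d(p^*,\cdot)$ up to a multiplicative $(1\pm O(\epsilon))$ distortion, which is why the concavity constant is forced to degenerate like $-c(n)/\epsilon$ rather than staying bounded.

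First I would set up the averaged function: for a suitable probability measure $\mu$ supported on an annulus near scale $b$ around $p$ (or on a small ball around $p$, depending on which direction of averaging is cleaner), define $f(x) = -\int h\big(d(q,x)\big)\,d\mu(q)$ for an appropriate convex reparametrization $h$, or alternatively average $d(p,\cdot)$ itself over a ball of base points as in the standard construction of semiconcave approximations to distance functions. The point of averaging is that while $d(q,\cdot)$ has concavity defect concentrated on the cut locus of $q$, the average over $q$ has a uniformly bounded concavity defect controlled by the lower curvature bound $-1$, the lower volume bound $\nu$, and the scale $b$; on an $(\epsilon,\sigma)$-good scale annulus with $\sigma$ as small as $\epsilon^{2n}\nu^2$, the relevant cut-locus contributions are suppressed and one gets the concavity constant $-c(n)/\epsilon$. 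Second, I would verify the level-set containment: because $d_{GH}(B_s(p), B_s(p^*)) \le \epsilon s$ on the whole range $s \in [\sigma a, \sigma^{-1}b]$, the averaged function $f$ differs from (a fixed normalization of) $-d(p,\cdot)$ by at most $O(\epsilon s)$ at scale $s$, hence $f^{-1}(-s)$ sits inside $A^{(1+4\epsilon)s}_{(1-4\epsilon)s}(p)$ once the implied constants are tracked; the factor $4$ (rather than, say, $2$) is slack to absorb the averaging radius and the cone-approximation error simultaneously. Third, I would extend/modify $f$ on $B_{2b}(p)$ past the annulus (for $s$ near $2b$ and near $0$) so that it remains globally $(-c(n)/\epsilon)$-concave with the stated level-set property, using that concavity is a local condition and gluing concave functions via $\min$ (or a mollified min) preserves a comparable concavity constant.

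The main obstacle I expect is making the concavity estimate genuinely quantitative with the explicit dependence $\sigma \le \epsilon^{2n}\nu^2$: one must show that the concavity \emph{defect} of the averaged function, which a priori involves an integral of $\frac{d}{dt}(\text{something})$ over the set of base points $q$ whose cut locus is hit, is bounded by a quantity that the good-scale and volume hypotheses force to be $\le c(n)/\epsilon$. This is where the cone structure is essential — on a metric cone the radial distance function is \emph{exactly} concave (concavity defect zero) away from the vertex, so the defect of $f$ is entirely an artifact of the $\epsilon s$-Gromov–Hausdorff error between $B_s(p)$ and $B_s(p^*)$ across all scales $s$ in the good range, and one needs a telescoping/scale-summation argument (summing the errors over dyadic scales between $\sigma a$ and $\sigma^{-1}b$, of which there are $\lesssim \log(1/\sigma) \sim n\log(1/\epsilon\nu)$ many) to see that the total defect is controlled. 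Getting the powers of $\epsilon$ and $\nu$ to line up with the stated threshold $\sigma \le \epsilon^{2n}\nu^2$ will require care, but the structure of the argument — average, compare to the cone model where the defect vanishes, sum the comparison errors over scales — is, I expect, the right one, and is presumably how Lemma \ref{l:cox-haull} is proved in the source.
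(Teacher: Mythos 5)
Your high-level picture --- average distance functions from a spread-out family of base points and use the good-scale hypothesis plus the volume bound to make the concavity quantitative --- is in the right neighborhood, but the mechanism you describe is not the one that works, and the gap is substantive. You attribute the concavity defect of $d(q,\cdot)$ to cut-locus contributions that averaging supposedly suppresses, but in an $\Alex^n(-1)$ space the distance function $d_q$ is already semi-concave along geodesics regardless of the cut locus; the problem is not to suppress a defect but to produce a \emph{strictly negative} second derivative of order $-c(n)/\epsilon$, and that does not come from averaging $d_q$ or $-d_p$ directly. The paper's construction places the base points $q$ on the \emph{far} sphere $A_r^r(p)$ with $r=\sigma^{-1}b\gg b$, so that the positive contribution to $(d_q)''$ coming from the lower curvature bound is only about $1/r$, and then composes with the explicit quadratic $\varphi(s)=(s-r)-\tfrac{\epsilon}{b}(s-r)^2$ before averaging, giving $(\varphi\circ d_q)''\le \tfrac1r-\tfrac{2\epsilon}{b}\cos^2\theta+O(\sigma/b)$ where $\theta$ is the angle between the geodesic direction and $\uparrow^q_{\gamma(s)}$. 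The strict concavity is manufactured entirely by the $-\tfrac{2\epsilon}{b}\cos^2\theta$ term from $\varphi$. The averaging over a maximal $\delta r$-net $\mathcal Q^\alpha$ on the far sphere (this is where the volume bound $\nu$ enters, via a counting estimate) guarantees that only a small minority of the $q^\alpha_\beta$ can be nearly perpendicular to any given direction, so the average of $\cos^2\theta$ is bounded below. Finally $f=\min_\alpha f_\alpha$ preserves the concavity and gives level sets tracking $d_p$ to within $O(\epsilon)$. Your proposal mentions ``a convex reparametrization $h$'' but does not recognize that this reparametrization is the entire source of strict concavity rather than a cosmetic choice, and you place the averaging base points near $p$ rather than at scale $\sigma^{-1}b$, which removes the $1/r$ smallness that makes the argument close.

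Your telescoping over dyadic scales between $\sigma a$ and $\sigma^{-1}b$ also does not correspond to anything in the proof and would not produce the threshold $\sigma\le\epsilon^{2n}\nu^2$. The proof works at the single far scale $r=\sigma^{-1}b$; the constraint on $\sigma$ arises from balancing the good term $\tfrac{\epsilon\eta^2}{\sigma r}$ against the bad term $\tfrac{2}{r}$ and against the minority count $\tfrac{\eta}{(\delta\delta_1)^{n-1}}$ of nearly-perpendicular base points, with the explicit choices $\delta=\delta_1=4\epsilon$, $\eta=\epsilon^{n-1}\nu$, $\sigma=\epsilon^2\eta^2$. Without the quadratic $\varphi$ and the far-scale base points, no amount of averaging plain distance functions yields a concavity constant that is strictly negative, let alone $-c(n)/\epsilon$.
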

\begin{proof}
	Our constructions are similar to those in \cite{Kap02}, 3.6 in \cite{Per93a}, and 7.1.1 in \cite{Pet07}. However, we require more precise control of the constants. 
	
	Let $r=\sigma^{-1} b$. Define quadratic function
	$$\varphi(s)=(s-r)-\frac{\epsilon}{b}(s-r)^2.$$  
	For any unit speed geodesic $\gamma\subseteq B_{2b}(p)$, we have
	\begin{align}
		(\varphi(d(q,\gamma(s))))''
		&\le\frac1r-\frac{2\epsilon}{b}\left(\cos^2(\angle (\gamma^+(t), \uparrow_{\gamma(s)}^q))+\frac{d_q(\gamma(s))-r}{r}\right)
		\notag\\
		&\le \frac1r-\frac{2\epsilon}{b}\left(\cos^2(\angle (\gamma^+(t), \uparrow_{\gamma(s)}^q))-2\sigma \right).
		\label{l:cox-haull.e2}
	\end{align}
	

	Let $Q=\{q_\alpha\}$ be a collection of maximum number of points in $A_r^r(p)$ so that $d(q_{\alpha_1}, q_{\alpha_2})\ge\delta r$ for any $\alpha_1\neq\alpha_2$. 
	For each $\alpha$, let $\mathcal Q^\alpha=\{q^\alpha_\beta\}$ be a collection of maximum number of points in $B_{\delta r}(q_\alpha)\cap A_r^r(p)$ so that $d(q^\alpha_{\beta_1}, q^\alpha_{\beta_2})\ge\delta_1\delta r$ for any $\beta_1\neq\beta_2$. Because $A_{a}^{b}(p)$ is an $(\epsilon,\sigma)$-good scale annulus, we have that
	\begin{align}
		|\mathcal Q^\alpha|
		&\ge c(n)\frac{\cH^{n-1}(\partial B_{r}(p)\cap B_{\delta r}(q_\alpha))}{(\delta_1\delta r)^{n-1}}
		\ge c(n)\frac{\cH^{n-1}(\partial B_{r}(p))}{(\delta_1\delta r)^{n-1}}\cdot \left(\frac{\delta r}{r}\right)^{n-1}
		\notag\\
		&\ge c(n)\frac{\cH^{n-1}(\partial B_{b}(p))}{\delta_1^{n-1} b^{n-1}}
		\ge c(n)\frac{\cH^{n}(B_{b}(p))-(\sigma b)^n}{\delta_1^{n-1} b^{n}}
		\notag \\
		&\ge c(n)\frac{\nu b^n-(\sigma b)^n}{\delta_1^{n-1} b^{n}}
		\notag \\
		&\ge \frac{c(n)\nu}{\delta_1^{n-1}},
		\label{l:cox-haull.e4}
	\end{align}
	for $0<\sigma<\frac12\nu$.

	Define $$\dsp f_\alpha=\frac{1}{|\mathcal Q^\alpha|}\sum_{q^\alpha_\beta\in\mathcal Q^\alpha}\varphi\circ d_{q^\alpha_\beta}.$$ 
	Now we estimate the concavity of $f_\alpha$. For any $q\in B_r(p)$, by (\ref{l:cox-haull.e2}) and $b=\sigma r$, we have
	\begin{align}
		(\varphi(d(q,\gamma(s))))''
		&\le \frac1r+\frac{4\epsilon\sigma}b=\frac1r+\frac{4\epsilon}{r}\le \frac2r.
		\label{l:cox-haull.e5}
	\end{align}
    Given a direction $\xi\in\Sigma_x$, let $\mathcal Q^\alpha_x(\xi)=\left\{q^\alpha_\beta\in \mathcal Q^\alpha\colon \left|\angle \left(\xi, \uparrow_x^{q^\alpha_\beta}\right)-\frac\pi2\right|\le \frac1{10}\eta\right\}$. 
	If $q\in\mathcal Q^\alpha\setminus \mathcal Q^\alpha_{\gamma(s)}(\gamma^+(s))$, we have $|\cos(\angle (\gamma^+(t), \uparrow_{\gamma(s)}^q))|\ge\eta$. Then (\ref{l:cox-haull.e2}) gives
	\begin{align}
		(\varphi(d(q,\gamma(s))))''
		&\le \frac1r-\frac{2\epsilon}{b}(\eta^2-2\sigma)\le\frac{2}{r}-\frac{4\epsilon\eta^2}{\sigma r}
		\le -\frac{\epsilon\eta^2}{\sigma r},
		\label{l:cox-haull.e6}
	\end{align}
	provided $\sigma\le\epsilon\eta^2$. 
	
	Note that $\mathcal \mathcal Q^\alpha_x(\xi)$ is the minority in $\mathcal Q^\alpha$. In fact, $d(p,x)\le 2b =2\sigma r$ and we have 
	$$\ang{x}{q^\alpha_{\beta_i}}{q^\alpha_{\beta_j}}
	\ge \tang{x}{q^\alpha_{\beta_i}}{q^\alpha_{\beta_j}}
	\ge \tang{p}{q^\alpha_{\beta_i}}{q^\alpha_{\beta_j}}-10\sigma
	\ge \frac12\delta\delta_1-10\sigma\ge\frac14\delta\delta_1,$$
	provided $\sigma\le\frac1{50}\delta\delta_1$. 
	Thus $\displaystyle |\mathcal Q^\alpha_x(\xi)|\le \frac{c(n)\eta}{(\delta\delta_1)^{n-1}}$.
	By (\ref{l:cox-haull.e4}), (\ref{l:cox-haull.e5}) and (\ref{l:cox-haull.e6}), we have
	\begin{align}
		f_\alpha(\gamma(s))''
		&= \frac{1}{|\mathcal Q^\alpha|}\sum_{q^\alpha_\beta\in\mathcal Q^\alpha}(\varphi(d(q^\alpha_\beta,\gamma(s))))''
		\notag\\
		&\le \frac{1}{|\mathcal Q^\alpha|}\left(\sum_{q^\alpha_\beta\in \mathcal Q^\alpha_{\gamma(s)}(\gamma^+(s))}(\varphi(d(q^\alpha_\beta,\gamma(s))))''
		+ \sum_{q^\alpha_\beta\notin \mathcal Q^\alpha_{\gamma(s)}(\gamma^+(s))}(\varphi(d(q^\alpha_\beta,\gamma(s))))''\right)
		\notag\\
		&\le \frac{c(n)\delta_1^{n-1}}{\nu}\left(\frac{\eta}{(\delta\delta_1)^{n-1}}\cdot\frac2r 
		- \left(\frac{\nu}{\delta_1^{n-1}}-\frac{\eta}{(\delta\delta_1)^{n-1}}\right)\cdot\frac{\epsilon\eta^2}{\sigma r}\right)
		\notag \\
		&\le \frac{c(n)\eta}{\nu r}\cdot\left(\frac{2}{\delta^{n-1}}-\frac{\epsilon\eta\nu}{\sigma}+\frac{\epsilon\eta^2}{\delta^{n-1}\sigma}\right)
		\notag\\
		&\le \frac{c(n)\eta}{\nu r}\cdot\left(\frac{2\epsilon\eta^2}{\delta^{n-1}\sigma}-\frac{\epsilon\eta\nu}{\sigma}\right)
		\notag\\
		&= \frac{c(n)\epsilon\eta^2}{\nu r\sigma}\cdot\left(\frac{2\eta}{\delta^{n-1}}-\nu\right)
		\notag\\
		&\le -\frac{c(n)}{\epsilon},
		\label{l:cox-haull.e7}
	\end{align}
	provided $\sigma\le\epsilon^2\eta^2$ and $\eta\le\frac{1}{4}\delta^{n-1}\nu$. For example, we can choose $\delta=\delta_1=4\epsilon$, $\eta=\epsilon^{n-1}\nu$ and $\sigma=\epsilon^2\eta^2=\epsilon^{2n}\nu^2$. In particular, for any $\lambda<0$, function $f_\alpha$ is $\lambda$-concave, provided $0<\epsilon\le\frac{c(n)}{|\lambda|}$.
	
	Now define $\displaystyle f(x)=\min_\alpha f_\alpha(x)$. It's clear that $f$ is also $\left(-\frac{c(n)}{\epsilon}\right)$-concave in $B_{2b}(p)$. We claim that $|\nabla_x f|$ is close to 1 for any $x\in A_{0.5a}^{2b}(p)$. 
	For any $q\in \partial B_{r}(p)$ and $x\in B_{2b}(p)$, we have
	\begin{align}
		|\varphi\circ d_q(x)-(d_q(x)-r)|
		&=\frac\epsilon{b}(d_q(x)-r)^2\le \frac\epsilon{b} d^2_p(x)\le 2\epsilon \cdot d_p(x). 
		\label{l:cox-haull.e1}
	\end{align}
	On one hand, for any $q_\alpha\in \mathcal Q$ and $q^\alpha_\beta\in \mathcal Q^\alpha$, we have 
	\begin{align}
		\varphi\circ d_{q^\alpha_\beta}(x)\ge d_{q^\alpha_\beta}(x)-r -2\epsilon \cdot d_p(x)\ge -d_p(x)-2\epsilon \cdot d_p(x)\ge -(1+2\epsilon) \cdot d_p(x)
		\label{l:cox-haull.e8}
	\end{align}
	Thus $f_\alpha(x)\ge -(1+2\epsilon) \cdot d_p(x)$ for every $\alpha$ and 
	\begin{align}
		f(x)\ge -(1+2\epsilon) \cdot d_p(x).
		\label{l:cox-haull.e9}
	\end{align}
	On the other hand, for any $x\in A_{0.5a}^{2b}(p)$, by the good scale annulus structure, there exists $q_\alpha\in \mathcal Q$ so that $\ang{p}{q_\alpha}{x}\le 10(\delta+\epsilon)$. Moreover, 
	\begin{align}
		\ang{p}{q_\alpha}{x}\le 10(\delta+\epsilon+\delta\delta_1)\le 60\epsilon,
		\label{l:cox-haull.e10}
	\end{align} 
	for every $q^\alpha_\beta\in\mathcal Q^\alpha$, provided $0<\delta\le 4\epsilon$. By triangle comparison, this implies
	\begin{align}
		d_{q^\alpha_\beta}(x)\le r-d_p(x)\cos(60\epsilon).
		\label{l:cox-haull.e11}
	\end{align}
	Thus by (\ref{l:cox-haull.e1}), we have
	\begin{align}
		\varphi\circ d_{q^\alpha_\beta}(x)
		&\le d_{q^\alpha_\beta}(x)-r +2\epsilon \cdot d_p(x)\le -(\cos(60\epsilon)-2\epsilon)d_p(x)
		\notag\\
		&\le -(1-4\epsilon)d_p(x).
		\label{l:cox-haull.e12}
	\end{align}
	Therefore, 	
	\begin{align}
		f(x)\le f_\alpha(x)\le -(1-4\epsilon)d_p(x).
		\label{l:cox-haull.e13}
	\end{align}
	Combine (\ref{l:cox-haull.e9}) and (\ref{l:cox-haull.e13}), we have 
	\begin{align}
		\left|\frac{f(x)+d_p(x)}{d_p(x)}\right|\le 4\epsilon.
		\label{l:cox-haull.e14}
	\end{align}
	In particular, $f^{-1}(-s)\subseteq A_{(1-4\epsilon)s}^{(1+4\epsilon)s}(p)$ for any $s\le 2b$.
\end{proof}

Now we show that the above concave function induces a convex annulus. 

\begin{definition}[Convex annulus]\label{d:cvx.ann}
	A triple $(W, p, T)$ is said to be an $(\epsilon,\lambda)$-convex annulus if there exist $b>a\ge 0$ so that the following hold. 
	\begin{enumerate}
		\renewcommand{\labelenumi}{(\roman{enumi})}
		\item (Good scale) $A_{a}^{b}(p)$ is an $(\epsilon,\epsilon)$-good scale annulus and $A_{a}^{b}(p)\subseteq W\subseteq A_{0.5a}^{2b}(p)$.		
		\item (Convexity) There exists a hyper surface $H\subset X$ so that $W=\cup_{0\le t\le T} H_t$, where $H_t= \{x\in W\colon d(x,H)=t\}$ and the distance function $d_{H_t}$ is $\lambda$-concave on the sup-level set $W_t=\cup_{t\le s< T} H_s$.  
	\end{enumerate}
\end{definition}
As an abuse of notation, for a convex annulus $(W,p, T)$, we denote the associated $H$ and $H_t$ by $\partial W$ and $\partial W_t$ respectively.  We may also omit $p$ or $T$ in $(W,p, T)$ if it there is no ambiguity.

\begin{remark}
	In our definition, the boundary $\partial W$ of a convex annulus $W$ is a convex level set. However, $W$ may not be a convex set if $a\neq 0$ in the associated annular region $A_{a}^{b}(p)$.
\end{remark}

\begin{lemma}[Convex annulus on good scale annulus]\label{l:cox-ann} There exists a constant $c(n)>0$ so that the following hold. Let $A_{a}^{b}(p)$ be an $(\epsilon, \sigma)$-good scale annulus with $\cH^{n}(B_{b}(p))\ge\nu b^n>0$. If $0<\sigma\le\epsilon^{2n}\nu^2$, then there exists an $\left(\epsilon, -\frac{c(n)}{\epsilon}\right)$-convex annulus $W$ so that $A_{a}^{b}(p)\subseteq W\subseteq A_{0.5a}^{2b}(p)$.
\end{lemma}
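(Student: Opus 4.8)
The plan is to take the $\left(-\frac{c(n)}{\epsilon}\right)$-concave function $f$ on $B_{2b}(p)$ produced by Lemma \ref{l:cox-haull}, and use its sub-level sets to manufacture the convex annulus. Recall that Lemma \ref{l:cox-haull} gives $f^{-1}(-s)\subseteq A_{(1-4\epsilon)s}^{(1+4\epsilon)s}(p)$ for all $s\le 2b$, and from estimate (\ref{l:cox-haull.e14}) we also know $|\nabla_x f|$ is within $4\epsilon$ of $1$ on $B_{2b}(p)\setminus\{p\}$, so $-f$ has no critical points away from $p$ and its level sets foliate the annular region. First I would fix a value $s_0$ slightly below $b$ (say $s_0=(1-4\epsilon)^{-1}a$ adjusted so that $f^{-1}(-s_0)\supseteq A_a^a(p)$ is not quite right — more precisely choose $s_0$ with $(1+4\epsilon)s_0\le a$ is impossible, so instead take the outer level $s_1$ with $(1-4\epsilon)s_1\ge b$, i.e. $s_1=(1-4\epsilon)^{-1}b$) and set $H=f^{-1}(-s_1)$ to be the outermost level set, with $W$ the region swept out by the gradient flow of $-f$ from $H$ inward until we exit the good-scale annulus. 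By (\ref{l:cox-haull.e14}), $H\subseteq A_{(1-4\epsilon)s_1}^{(1+4\epsilon)s_1}(p)=A_b^{\frac{1+4\epsilon}{1-4\epsilon}b}(p)$, and flowing inward the region $W$ stays inside $A_{0.5a}^{2b}(p)$ provided $\epsilon$ is small; shrinking the flow time appropriately guarantees $A_a^b(p)\subseteq W$ as well, giving condition (i) of Definition \ref{d:cvx.ann}.

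The substance of the argument is condition (ii): I must show that the distance function $d_{H_t}$ to the level hypersurface $H_t=\{x\in W: d(x,H)=t\}$ is $\lambda$-concave on the sup-level set $W_t$, with $\lambda=-\frac{c(n)}{\epsilon}$. The standard mechanism here (as in Petrunin's and Kapovitch's gradient-flow/gradient-exponential constructions, cited in the proof of Lemma \ref{l:cox-haull}) is that if $-f$ is $\frac{c(n)}{\epsilon}$-convex, then the gradient flow $\Phi^t$ of $-f$ is contracting at a definite rate, and the level sets $\{-f=\text{const}\}$ are themselves $\frac{c(n)}{\epsilon}$-convex hypersurfaces; moreover the equidistant foliation $H_t$ of $W$ by $W$-distance to the outer boundary $H$ coincides, up to a smooth reparametrization with uniformly bounded derivatives, with the foliation by level sets of $f$ (because $|\nabla f|$ is bounded above and below by constants near $1$). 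Concretely I would first verify that $d_H|_W$ is $\lambda'$-concave — this follows because $H$ is a $\lambda'$-convex hypersurface (sub-level set boundary of a $\lambda'$-concave function whose gradient is bounded away from $0$) and gradient flows of $d_H$ preserve and improve concavity in Alexandrov spaces — and then note that $H_t=(d_H)^{-1}(t)$ and $W_t=(d_H)^{-1}([t,T))$, and that $d_{H_t}|_{W_t}=d_H|_{W_t}-t$ differs from $d_H$ by a constant, hence is $\lambda'$-concave as well. Setting $\lambda=\lambda'$ and tracking that all the implied constants can be absorbed into $c(n)/\epsilon$ completes the proof.

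The main obstacle I anticipate is the passage from "$f$ is $(-c(n)/\epsilon)$-concave with near-unit gradient" to "$H=\{f=-s_1\}$ is a hypersurface whose equidistant sets have $(-c(n)/\epsilon)$-concave distance functions." In the smooth category this is a one-line second-fundamental-form computation, but in the Alexandrov setting one needs the appropriate synthetic statements: (a) that a super-level set of a concave function with non-vanishing gradient has convex boundary with a quantitative concavity bound for its distance function, and (b) that this concavity bound is \emph{preserved} (not degraded) under the equidistant flow — equivalently, that Sharafutdinov-type retraction / gradient-flow monotonicity holds with the right sign. Both are available in the literature (Petrunin's "Semiconcave functions in Alexandrov geometry", Kapovitch \cite{Kap02}, Perelman \cite{Per93a}) and in the techniques already invoked for Lemma \ref{l:cox-haull}; I would cite those and check only that the constants combine to $-c(n)/\epsilon$. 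A secondary, purely bookkeeping obstacle is choosing the flow time $T$ and the threshold $s_1$ so that both inclusions $A_a^b(p)\subseteq W\subseteq A_{0.5a}^{2b}(p)$ hold simultaneously; this is routine given (\ref{l:cox-haull.e14}) and the inequality $\frac{1+4\epsilon}{1-4\epsilon}<2$ valid for $\epsilon$ small, which we may assume by the hypothesis $0<\sigma\le\epsilon^{2n}\nu^2$ forcing $\epsilon$ into a small range, or simply by noting the statement is vacuous otherwise.
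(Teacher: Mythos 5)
Your proposal is correct and follows essentially the same route as the paper: take the $(-c(n)/\epsilon)$-concave function $f$ from Lemma~\ref{l:cox-haull}, set $H$ to be an appropriate level set of $f$, and deduce $\lambda$-concavity of $d_{H_t}$ on $W_t$ from the fact that level sets of a concave function with gradient close to $1$ are quantitatively convex (the paper makes this precise by citing Theorem 1.6, Corollary 1.9, and (4.4) of Alexander--Bishop~\cite{AB10}, choosing $H=f^{-1}(-1.8b)$ and $T=1.8b-0.8a$). Your observation that $d_{H_t}|_{W_t}=d_H|_{W_t}-t$, so that $\lambda$-concavity of $d_H$ transfers to $d_{H_t}$ by a constant shift, is a slight streamlining of the paper's argument, which instead re-applies the extrinsic-curvature machinery to each $H_t$ at the cost of halving the concavity constant.
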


\begin{proof}
	 Let $f$ be defined as in Lemma \ref{l:cox-haull} and $H=f^{-1}(-1.8b)$. Then $H\subseteq A_{(1-4\epsilon)1.8b}^{(1+4\epsilon)1.8b}(p)\subseteq A_{1.5b}^{2b}(p)$ and 
	 \begin{align}
	 	d_{GH}(H, A_{1.8b}^{1.8b}(p))<10\epsilon b.
	 	\label{l:cox-ann.e1}
	 \end{align} 
	To construct the convex annulus $W$, we let 
	$$H_t=\big\{x\in B_{2b}(p)\colon f(x)\ge -1.8b\text{ and } d(x,H)=t\big\}$$
	for $0< t< T=1.8b-0.8a$. Define $W_t=\cup_{t\le s< T} H_t$ and $W=W_0$. By the good scale condition on $A_a^b(p)$, we have $A_{a}^{b}(p)\subseteq W=\cup_{0\le t< T} H_t\subseteq A_{0.5a}^{2b}(p)$. Thus $W$ satisfies Definition \ref{d:cvx.ann} (i).

	It follows from the good scale and (\ref{l:cox-ann.e1}) that $|\nabla_xd_H-1|<20\epsilon$ for any $x\in A_{0.5a}^{2b}(p)\setminus \{p\}$. Therefore, we have $H_t\subseteq A_{(1-30\epsilon)(1.8b-t)}^{(1+30\epsilon)(1.8b-t)}(p)$,  $d_{GH}(H, A_{1.8b-t}^{1.8b-t}(p))<60\epsilon b$ and $||\nabla_xd_{H_t}|-1|<60\epsilon$ for any $x\in A_{0.5a}^{2b}(p)\setminus \{p\}$.
	
	Let $\lambda=-\frac{c(n)}{\epsilon}<-100$. Direct computation shows that level sets $H_t$, $0\le t< T$ are all ``very" convex and $d_{H_t}$ is $\frac12\lambda$-concave on $W_t$ for $\epsilon>0$ small. For details of the computation one can refer to \cite{AB10}. Let us outline the proof below. By (\ref{l:cox-haull.e14}), we have $||\nabla_xf|-1|\le 4\epsilon$ for any $x\in A_{0.5a}^{2b}(p)\setminus \{p\}$. By Theorem 1.6 \cite{AB10}, we have that the extrinsic curvature of $H=f^{-1}(-1.8b)$ is at least $|\lambda|$, by choosing $\epsilon>0$ small.
	
	By Corollary 1.9 \cite{AB10} and $X\in\Alex^n(-1)$, we have that $d_{H}|_{B_{1.5b}(p)}$ is concave. Moreover, because the extrinsic curvature of level sets are not less than the model case, due to (4.4) in \cite{AB10}, we have that $d_{H}$ is $\frac12\lambda$-concave on $W$. By Theorem 1.6 \cite{AB10} again, we have that the extrinsic curvature of $H_t$ is at least $|\lambda|$ for every $0\le t< T$. Apply Corollary 1.9 \cite{AB10} once more. We get that $d_{H_t}$ is $\frac14\lambda$-concave on $W_t$ for every $0\le t< T$.
\end{proof}

\begin{remark}
	Because $\partial W=H$ is very convex, the gradient flow of $d_{\partial W}|_{B_{1.5b}(p)}$ will contract $W\supseteq A_a^b(p)$ to a point. However, we don't know whether it contracts to a point in finite time if $a>0$. This is because $\nabla_xd_H$ may not be close to 1 in $B_s(p)$ for $0<s\le a$ small. By the same reason, the concavity of $d_{H_t}$ may not be preserved for $t>T$. 
\end{remark}


\begin{theorem}[Covering by convex annuli]\label{t:cvx-cover} 
	Let $(X,p)\in \Alex^n(-1)$ and $\Vol{B_1(p)}\ge \nu>0$. 
	For any closed subset $\Omega\subseteq\bar B_1(p)$, there exists a collection of $\left(\epsilon, -\frac{c(n)}{\epsilon}\right)$-convex annuli  $\cW=\{(W_i, p_i, T_i)\}$ so that  
	\begin{enumerate}
	\renewcommand{\labelenumi}{(\roman{enumi})}
	\item $\Omega\subseteq \cup_iW_i$,
	\item $p_i\in\Omega$ for every $i$,
	\item $|\cW|<C(n,\nu,\epsilon)$.
\end{enumerate}
\end{theorem}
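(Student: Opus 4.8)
The plan is to derive Theorem~\ref{t:cvx-cover} from the covering by good scale annuli (Theorem~\ref{t:ann_cover}) by upgrading each good scale annulus to a convex annulus via Lemma~\ref{l:cox-ann}, and then controlling the cardinality. First I would fix the parameter $\sigma = \sigma(n,\epsilon,\nu) = \epsilon^{2n}\nu^2$ dictated by Lemma~\ref{l:cox-ann}, possibly shrinking it further so that the hypothesis $\sigma \le \epsilon$ of Definition~\ref{d:cvx.ann}(i) also holds (this only requires $\epsilon$ small, and we may always assume $\epsilon < 1$). Applying Theorem~\ref{t:ann_cover} with this $\sigma$ and this $\epsilon$ to the closed set $\Omega$, I obtain a finite collection $\mathcal C = \{A_{a_i}^{b_i}(p_i)\}$ of $(\epsilon,\sigma)$-good scale annuli with $\Omega \subseteq \bigcup_i A_{a_i}^{b_i}(p_i)$, $p_i \in \Omega$, and $|\mathcal C| < C(n,\epsilon,\sigma) = C(n,\epsilon,\nu)$.

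The next step is the volume bound needed to invoke Lemma~\ref{l:cox-ann}. Since $\Vol{B_1(p)} \ge \nu$ and $p_i \in \Omega \subseteq \bar B_1(p)$, Bishop--Gromov on $X \in \Alex^n(-1)$ gives a lower bound $\cH^n(B_{b_i}(p_i)) \ge \nu' b_i^n$ with $\nu' = \nu'(n,\nu)$ for all $b_i \le 2$ say; because $a_i \le \sigma b_i \le b_i$, each $b_i$ is comparable to the radius scale of the annulus and in particular $b_i$ is bounded. (If some $b_i$ exceeds $2$ one simply rescales or notes $A_{a_i}^{b_i}(p_i) \cap \bar B_1(p)$ is still covered; alternatively Theorem~\ref{t:ann_cover} already produces $b_i \le 1$.) Having secured $\cH^n(B_{b_i}(p_i)) \ge \nu' b_i^n > 0$ and $\sigma \le \epsilon^{2n}(\nu')^2$ — which we arrange by taking $\sigma = \epsilon^{2n}(\nu')^2$ from the outset in the application of Theorem~\ref{t:ann_cover} — Lemma~\ref{l:cox-ann} yields for each $i$ an $\left(\epsilon, -\tfrac{c(n)}{\epsilon}\right)$-convex annulus $(W_i, p_i, T_i)$ with $A_{a_i}^{b_i}(p_i) \subseteq W_i \subseteq A_{0.5a_i}^{2b_i}(p_i)$.

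It remains to check the three conclusions. Property (i): since $W_i \supseteq A_{a_i}^{b_i}(p_i)$ and these annuli cover $\Omega$, we get $\Omega \subseteq \bigcup_i W_i$. Property (ii): $p_i \in \Omega$ is inherited directly from Theorem~\ref{t:ann_cover}. Property (iii): $|\cW| = |\mathcal C| < C(n,\epsilon,\sigma)$, and since $\sigma$ was chosen as an explicit function of $n$, $\epsilon$, $\nu$, this bound is of the form $C(n,\nu,\epsilon)$. Finally one records that each $(W_i,p_i,T_i)$ is genuinely a convex annulus in the sense of Definition~\ref{d:cvx.ann}: part (i) of that definition is the good scale plus containment property just established (after checking $\sigma \le \epsilon$), and part (ii) is exactly the output of Lemma~\ref{l:cox-ann} (the hypersurface $H = \partial W_i$, the foliation $W_i = \bigcup_{0 \le t < T_i} H_t$, and the $\lambda$-concavity of $d_{H_t}$ on $W_{i,t}$).

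The main obstacle — really the only non-bookkeeping point — is ensuring the volume lower bound $\cH^n(B_{b_i}(p_i)) \ge \nu' b_i^n$ transfers from the global hypothesis $\Vol{B_1(p)} \ge \nu$ to every ball $B_{b_i}(p_i)$ appearing in the cover, uniformly in $i$, with $\nu'$ depending only on $n$ and $\nu$; this is where Bishop--Gromov volume comparison enters and where one must be careful that the radii $b_i$ stay bounded (which the statement of Theorem~\ref{t:ann_cover} guarantees, as it produces annuli inside $\bar B_1$-scale). Everything else is a direct concatenation: choose $\sigma$ to satisfy simultaneously the hypotheses of Theorem~\ref{t:ann_cover}, of Lemma~\ref{l:cox-ann}, and of Definition~\ref{d:cvx.ann}(i), then apply the two results in sequence and read off (i), (ii), (iii).
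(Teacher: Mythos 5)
Your proposal is correct and follows essentially the same route as the paper: apply Theorem~\ref{t:ann_cover} with $\sigma$ chosen as in Lemma~\ref{l:cox-ann}, use Bishop--Gromov to transfer the volume lower bound from $B_1(p)$ to each $B_{b_i}(p_i)$, and then invoke Lemma~\ref{l:cox-ann} on each good scale annulus to produce the convex annuli, with the cardinality bound inherited directly. The extra bookkeeping you supply (checking $\sigma\le\epsilon$ for Definition~\ref{d:cvx.ann}(i), and noting $b_i$ stays bounded) is implicit in the paper but correctly spelled out here.
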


\begin{proof}
	By Theorem \ref{t:ann_cover}, we can cover $\Omega$ by a collection of $(\epsilon,\sigma)$-good scale annuli $\mathcal C=\Big\{A_{a_i}^{b_i}(p_i)\Big\}$, with $p_i\in\Omega$ and $|\mathcal C|<c(n,\epsilon,\sigma)$. For each $i$, by Bishop-Gromov volume comparison, we have
	$$\frac{\cH^{n}(B_{b_i}(p))}{b_i^n}\ge c(n)\cdot\cH^{n}(B_{1}(p))\ge c(n)\nu>0.
	$$
	By choosing $\sigma=\epsilon^{2n}\nu^2$ as in the assumption of Lemma \ref{l:cox-ann}, every good scale annulus $A_{a_i}^{b_i}(p_i)$ can be covered by an $\left(\epsilon, -\frac{c(n)}{\epsilon}\right)$-convex annulus $W_i$ with center at $p_i$.
\end{proof}

\subsection{Adjust the intersection angles}

In order to cover $B_1$ and capture the singularities in all directions, we will recursively apply Theorem \ref{t:cvx-cover} to establish a stratified covering.  Comparison geometry will be applied on each of the strata in the proof of the monotonic formula in Section \ref{s:mono}. This requires that the intersection angles between convex annuli are not greater than $\frac\pi2$. 

\begin{definition}\label{d:frame}
	A sequence of $(\epsilon,\lambda)$-convex annuli $(W^i, p_i, T_i)$, $i=1,2,\dots,m$ is called an $(\epsilon,\lambda, \delta)$-frame if 
	\begin{align}
		-2\delta \le \langle \nabla_xd_{\partial W^i_t}, \nabla_xd_{\partial W^j_s}\rangle\le -\delta
	\end{align}
for any $x\in \partial W^i_t\cap \partial W^j_s$, $0\le t< T_i$ and $0\le s< T_j$. The intersection $\displaystyle M=\cap_{i=1}^m\partial W_{t_i}^i$ is called an $(\epsilon,\lambda,\delta)$-corner space of $\{W^1_{t_1}, W^2_{t_m},\dots,W^m_{t_m}\}$. It's clear that $\dim(M)=n-m$. For convenience, we define $M=X$ if $m=0$. 
\end{definition}

\begin{lemma}[Adjust the intersection angles]\label{t:adj-angle}
	Let $(W^i, p_i, T_i)$, $i=1,2,\dots,m$ be an $(\epsilon,\lambda, \delta)$-frame. Let $\displaystyle M=\cap_{i=1}^m\partial W^i$ be an $(\epsilon,\lambda,\delta)$-corner space. The following holds for $0<10\epsilon<\delta<\frac1{200n}$ small and $\lambda<0$. 
	
	For any $(\epsilon,\lambda)$-convex annulus $(W,p,T)$ with $p\in \cap \partial W^i$, given by concave function $f(x)$ in Lemma \ref{l:cox-ann}, there exists a $(2\epsilon,\frac12\lambda)$-convex annulus $(\widetilde W,p, \frac12T)$ so that $W_{\frac12T}\subseteq \widetilde W\subseteq W_{2T}$ and 
	\begin{align}
		-24\delta \le \langle \nabla_xd_{\partial W^i}, \nabla_xd_{\partial \widetilde W_{t}}\rangle\le -12\delta
		\label{t:adj-angle.e1}
	\end{align}
	for any $1\le i\le m$, $x\in M\cap \partial \widetilde W_t$ and $0\le t<\frac12T$. In particular 
	$$\{(W^1, p_1, T_1), (W^2, p_2, T_2), \dots, (W^m, p_m, T_m), (\widetilde W,p,\frac12T)\}$$ 
	is an $(\epsilon,\frac12\lambda, 12\delta)$-frame.
\end{lemma}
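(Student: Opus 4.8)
The plan is to obtain $\widetilde W$ by perturbing the concave function $f$ that produces $W$ and then re-running the construction of Lemma \ref{l:cox-ann}. For each $i$ let $\rho_i$ denote the signed distance to $\partial W^i$, positive on the $W^i$-side. Since $\partial W^i$ is very convex (its extrinsic curvature is at least $|\lambda|$, as in the proof of Lemma \ref{l:cox-ann}), $\rho_i$ is concave on a neighborhood of the corner $M=\cap_i\partial W^i$, with $\nabla^2\rho_i\le 0$ and $\nabla^2\rho_i\ge -c(n)|\lambda|$ there, and along $M$ its gradient is the inner unit normal of $\partial W^i$, i.e. $\nabla_x\rho_i=\nabla_x d_{\partial W^i}$ for $x\in M$. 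I would fix an absolute constant $\kappa\in(12,24)$ and set
\[
\widetilde f\;=\;f-\kappa\,\delta\sum_{i=1}^{m}\rho_i
\]
on $B_{2b}(p)$ (here $A_a^b(p)$ is the good-scale annulus underlying $(W,p,T)$), then take $\partial\widetilde W=\widetilde f^{-1}(-s_0)$ for a suitable $s_0\asymp 0.9\,b$ and define $\widetilde W$ together with its foliation $\{\partial\widetilde W_t\}_{0\le t<\frac12T}$ by the inward gradient flow of $d_{\partial\widetilde W}$, exactly as in Lemma \ref{l:cox-ann}.

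Two things then have to be checked. First, that $(\widetilde W,p,\tfrac12T)$ is a $(2\epsilon,\tfrac12\lambda)$-convex annulus with $W_{\frac12T}\subseteq\widetilde W\subseteq W_{2T}$: the concavity estimate $\nabla^2\widetilde f\le\lambda+\kappa\,\delta\sum_i|\nabla^2\rho_i|\le\lambda\bigl(1-c(n)\kappa\,m\delta\bigr)\le\tfrac12\lambda$ holds because $m\delta<\tfrac1{200}$; the gradient norm satisfies $\bigl||\nabla\widetilde f|-1\bigr|\le c(n)m\delta+O(\epsilon)<\tfrac1{10}$; and since $|\rho_i|=O(b)$ on $B_{2b}(p)$ one gets $|\widetilde f-f|\lesssim m\kappa\delta\,b$, so the level sets of $\widetilde f$ differ from those of $f$ by a relative amount $O(m\delta)$, which the loose containment $A_a^b(p)\subseteq\,\cdot\,\subseteq A_{0.5a}^{2b}(p)$ in Definition \ref{d:cvx.ann} comfortably absorbs, the underlying good-scale annulus being essentially a sub-annulus of $A_a^b(p)$. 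Feeding the $\tfrac12\lambda$-concavity of $\widetilde f$ and $||\nabla\widetilde f|-1|<\tfrac1{10}$ into \cite{AB10} as in Lemma \ref{l:cox-ann} gives that the $\partial\widetilde W_t$ are very convex and $d_{\partial\widetilde W_t}$ is as concave as required. Second, the angle estimate (\ref{t:adj-angle.e1}): for $x\in M\cap\partial\widetilde W_t$, $\partial\widetilde W_t$ being a gradient-flow hypersurface of $d_{\partial\widetilde W}$ gives $\nabla_x d_{\partial\widetilde W_t}=\nabla_x\widetilde f/|\nabla_x\widetilde f|+O(\epsilon)$, and using $\nabla_x\rho_j=\nabla_x d_{\partial W^j}$ on $M$, $|\nabla d_{\partial W^i}|=1$, and the frame bound $\langle\nabla_x d_{\partial W^i},\nabla_x d_{\partial W^j}\rangle\in[-2\delta,-\delta]$ for $j\ne i$, one finds
\[
\bigl\langle\nabla_x d_{\partial W^i},\,\nabla_x d_{\partial\widetilde W_t}\bigr\rangle=\frac{1}{|\nabla_x\widetilde f|}\Bigl(\langle\nabla_x d_{\partial W^i},\nabla_x f\rangle-\kappa\delta+O(m\delta^2)\Bigr)+O(\epsilon).
\]
Here $\langle\nabla_x d_{\partial W^i},\nabla_x f\rangle$ is small: $\nabla_x f$ is, up to $O(\epsilon)$, the radial direction $\uparrow_x^p$, and since $p,x\in M\subseteq\partial W^i$ and $\partial W^i$ is Gromov--Hausdorff close to a geodesic sphere through both, this direction is nearly tangent to $\partial W^i$, so the inner product is a small multiple of $\delta$ provided $W$ sits at a scale small relative to those of the $W^j$ (which is the case when the lemma is applied inside the recursive stratification). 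With this, together with $\epsilon<\tfrac1{10}\delta$, $m\delta^2\ll\delta$ and $|\nabla\widetilde f|\in[1-\tfrac1{10},1+\tfrac1{10}]$, a suitable choice of $\kappa\in(12,24)$ lands the inner product in $[-24\delta,-12\delta]$, i.e. (\ref{t:adj-angle.e1}). The ``in particular'' then follows: the pairs among $W^1,\dots,W^m$ keep their bound, which lies in $[-24\delta,-\delta]$, the pairs involving $\widetilde W$ satisfy (\ref{t:adj-angle.e1}), and so, after updating the frame parameter to $12\delta$, the enlarged family is an $(\epsilon,\tfrac12\lambda,12\delta)$-frame.

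The step I expect to be the main obstacle is this simultaneous control. The correction $-\kappa\delta\sum_i\rho_i$ must be large enough ($\asymp\delta$) to rotate the normal of $\partial\widetilde W_t$ off the nearly-orthogonal position by an amount comparable to $\delta$, yet a correction of this size already moves $|\nabla\widetilde f|$ by $\asymp m\delta$ and worsens the concavity of $\widetilde f$ by $\asymp m\delta\,|\lambda|$, either of which could destroy the convex-annulus structure. It is precisely the gap between $10\epsilon$ and $\tfrac1{200n}$ that keeps all of these errors small — $O(\epsilon)$ terms are negligible against $\delta$ since $\epsilon<\delta/10$, and the $m\delta$ and $m\delta|\lambda|$ terms are a small fraction of $1$ and of $|\lambda|$ since $m\delta\le n\delta<\tfrac1{200}$ — while still leaving the correction large enough to reach the narrow target window $[-24\delta,-12\delta]$. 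A secondary delicate point is the smallness of $\langle\nabla f,\nabla d_{\partial W^i}\rangle$ along $M$, which rests on $W$ being produced at a fine scale relative to the $\partial W^i$; this is automatic in the recursive construction, where $\widetilde W$ is built on the corner $M$ at a scale much smaller than the annuli already in the frame.
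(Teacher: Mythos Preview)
Your approach is essentially the paper's: perturb the concave function $f$ by a small multiple of a sum of auxiliary functions, one per existing frame direction, so as to tilt $\nabla\widetilde f$ slightly against each $\nabla d_{\partial W^i}$, and then rerun the construction of Lemma~\ref{l:cox-ann}. The only difference is the choice of auxiliary functions. You use the signed distances $\rho_i$ to $\partial W^i$; the paper uses the point-distances to the annulus centers,
\[
\tilde f(x)=f(x)+20\delta\sum_{i=1}^m\bigl(d(x,p_i)-d(p,p_i)\bigr).
\]
Since $\nabla_x d_{p_i}\approx -\nabla_x d_{\partial W^i}$ on the good-scale region, the two perturbations have the same first-order effect and the angle computation is identical; the paper obtains $|\langle\nabla_x d_{\partial W^i},\nabla_x f\rangle|\le 10\epsilon$ directly from the good-scale structure, without your extra scale caveat. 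The paper's choice is technically cleaner on the concavity side: adding $20\delta\sum d_{p_i}$ only worsens $\nabla^2 f$ by $20\delta m\cdot\sup_i\nabla^2 d_{p_i}$, and the standard comparison upper bound on $\nabla^2 d_{p_i}$ suffices. Your perturbation $-\kappa\delta\sum\rho_i$ instead requires the \emph{lower} bound $\nabla^2\rho_i\ge -c(n)|\lambda|$ that you assert; this is believable near $\partial W^i$ but is not a standard Alexandrov comparison inequality and would need justification, whereas the paper's $d_{p_i}$ sidesteps the issue entirely.
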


\begin{proof}
	For $x\in M\cap W$, define
	\begin{align}
		\tilde f(x)=f(x)+20\delta\cdot\sum_{i=1}^m \left(d(x,p_i)-d(p, p_i)\right).
		\label{t:adj-angle.e2}
	\end{align}
	It's clear that $\tilde f$ is $\frac12\lambda$-concave on the good-scale annulus for $\epsilon>0$ small. The corresponding convex annulus $\widetilde W$ is constructed as in Lemma \ref{l:cox-ann}. We have $W_{\frac12T}\subseteq \widetilde W\subseteq W_{2T}$ because
	\begin{align}
		\left|\frac{\tilde f(x)-f(x)}{d(x,p)}\right|\le 20\delta.
		\label{t:adj-angle.e3}
	\end{align}
Because $(W,p)$ is an $(\epsilon,\lambda)$-convex annulus and $p, x\in M$, we have 
\begin{align}
	|\langle \nabla_xd_{\partial W^i}, \nabla_xf\rangle|\le 10\epsilon.
	\label{t:adj-angle.e4}
\end{align}
    Because $(W^i,p_i)$ are $(\epsilon,\lambda)$-convex annuli, we have
    \begin{align}
    	|\langle \nabla_xd_{\partial W^i}, \nabla_xd_{p_i}\rangle+1|\le 10\epsilon.
    	\label{t:adj-angle.e5}
    \end{align}
    Because $\{(W^i,p_i)\}$ is an $(\epsilon,\lambda, \delta)$-frame, we have
    \begin{align}
    	|\langle \nabla_xd_{\partial W^i}, \nabla_xd_{p_j}\rangle|\le 2\delta+10\epsilon.
    	\label{t:adj-angle.e6}
    \end{align}
Combing (\ref{t:adj-angle.e4}) -- (\ref{t:adj-angle.e6}), we have
	\begin{align}
	\langle \nabla_xd_{\partial W^i}, \nabla_x\tilde f\rangle
	&\le 10\epsilon+20\delta(-1+10\epsilon+m(2\delta+10\epsilon))
	\notag\\
	&\le-18\delta+30\epsilon\le -15\delta,
\end{align}
provided $10\epsilon<\delta<\frac1{200n}$. Similarly, 
	\begin{align}
	\langle \nabla_xd_{\partial W^i}, \nabla_x\tilde f\rangle
	&\ge -10\epsilon-20\delta(1-10\epsilon+m(2\delta+10\epsilon))
	\notag\\
	&\ge-21\delta-11\epsilon\ge -23\delta.
\end{align}
Inequality (\ref{t:adj-angle.e1}) follows by the $\epsilon$-good scale on $\widetilde W$ and
\begin{align}
	|\langle \nabla_xd_{\partial \widetilde W}, \nabla_x\tilde f\rangle-1|\le 10\epsilon<\delta
\end{align}
for every $x\in\partial \widetilde W$. 
\end{proof}

We now recursively apply Lemma \ref{t:cvx-cover} to cover corner spaces and adjust the intersections angles by Lemma \ref{t:adj-angle}. Then we have the following stratified covering theorem. 

\begin{lemma}[Stratified covering by convex annuli]\label{t:cov-angle} There exists $c(n)>0$ so that the following holds for $\lambda<0<\epsilon<\frac{c(n)}{|\lambda|}$ and 	$0<10\epsilon<\delta<\frac1{200n}$.  
	Let $(W^i, p_i, T_i)$, $i=1,2,\dots,m$ be an $(\epsilon,\lambda, \delta)$-frame and $\displaystyle M=\cap_{i=1}^m\partial W^i$ be an $(\epsilon,\lambda,\delta)$-corner space. Then there exists an $(\epsilon,\frac12\lambda)$-convex annulus covering $\{(W^{m+1,\ell}, p'_\ell, T'_\ell)\}_{\ell=1}^N$ of $M\cap B_1$, so that 
	$$\{(W^1, p_1, T_1), (W^2, p_2, T_2), \dots, (W^m, p_m, T_m), (W^{m+1,\ell}, p'_\ell, T'_\ell)\}$$ 
	is an $(\epsilon,\frac12\lambda, 12\delta)$-frame for any $1\le\ell\le N=c(n,\epsilon, \lambda,\nu)$.
\end{lemma}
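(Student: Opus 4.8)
The plan is to prove Lemma \ref{t:cov-angle} by combining the covering theorem for convex annuli (Theorem \ref{t:cvx-cover}) with the angle-adjustment lemma (Lemma \ref{t:adj-angle}), using the latter to correct, one annulus at a time, the intersection angles produced by the former. Since $M = \cap_{i=1}^m \partial W^i$ is an $(\epsilon,\lambda,\delta)$-corner space of dimension $n-m$, it is a closed subset of $X$, and $M\cap B_1$ is a closed subset of $\overline B_1(p)$. I would first check that $M\cap B_1$ inherits the non-collapsing bound needed to apply Theorem \ref{t:cvx-cover}: the volume bound $\Vol{B_1(p)}\ge\nu$ on the ambient space $X$ is all that is needed, since Theorem \ref{t:cvx-cover} is stated for closed subsets $\Omega\subseteq\overline B_1(p)$ with the volume hypothesis placed on the ambient ball, not on $\Omega$. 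Hence Theorem \ref{t:cvx-cover} applied to $\Omega = M\cap B_1$ yields a collection of $\left(\epsilon,-\frac{c(n)}{\epsilon}\right)$-convex annuli $\{(\widehat W^{\,\ell}, p_\ell, \widehat T_\ell)\}_{\ell=1}^N$ with $N < C(n,\nu,\epsilon)$, covering $M\cap B_1$, with every center $p_\ell \in M\cap B_1 \subseteq \cap_i\partial W^i$. Setting $\lambda$ so that $0 < \epsilon < \frac{c(n)}{|\lambda|}$ and $0<10\epsilon<\delta<\frac{1}{200n}$ puts us in the hypotheses of Lemma \ref{t:adj-angle}.

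Next, for each $\ell$ I would apply Lemma \ref{t:adj-angle} to the $(\epsilon,\lambda,\delta)$-frame $(W^1,p_1,T_1),\dots,(W^m,p_m,T_m)$ together with the newly produced convex annulus $(\widehat W^{\,\ell}, p_\ell, \widehat T_\ell)$: the center condition $p_\ell\in\cap_i\partial W^i$ is exactly the hypothesis of that lemma. This produces a $\left(2\epsilon,\tfrac12\lambda\right)$-convex annulus $(\widetilde W^{\,\ell}, p_\ell, \tfrac12\widehat T_\ell)$ with $\widehat W^{\,\ell}_{\frac12\widehat T_\ell}\subseteq \widetilde W^{\,\ell}\subseteq \widehat W^{\,\ell}_{2\widehat T_\ell}$ satisfying the corrected angle bounds $-24\delta\le\langle\nabla_x d_{\partial W^i},\nabla_x d_{\partial\widetilde W^{\,\ell}_t}\rangle\le -12\delta$ on $M\cap\partial\widetilde W^{\,\ell}_t$, and such that the augmented sequence is an $(\epsilon,\tfrac12\lambda,12\delta)$-frame. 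I set $W^{m+1,\ell} := \widetilde W^{\,\ell}$, $T_\ell := \tfrac12\widehat T_\ell$. It remains only to verify the covering property: since passing to $\widetilde W^{\,\ell}$ only shrinks the annulus (we took $W_{\frac12T}\subseteq\widetilde W$, i.e. we discard an outer collar), I must check that the collars discarded do not leave $M\cap B_1$ uncovered. This is handled by the good-scale structure — the original covering $\{\widehat W^{\,\ell}\}$ can be taken with a definite overlap (each $\widehat W^{\,\ell}$ contains an honest annulus $A_{a_\ell}^{b_\ell}(p_\ell)$ and $p_\ell$ ranges over $M\cap B_1$), so after shrinking each $\widehat W^{\,\ell}$ to $\widetilde W^{\,\ell}$ we still cover $M\cap B_1$; alternatively one slightly enlarges the radii in the application of Theorem \ref{t:ann_cover} from the outset to absorb the loss. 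The cardinality bound $N < c(n,\epsilon,\lambda,\nu)$ is unchanged, since Lemma \ref{t:adj-angle} is applied once per annulus and produces one annulus each time.

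A minor bookkeeping point I would attend to: Lemma \ref{t:adj-angle} outputs a $(2\epsilon,\tfrac12\lambda)$-convex annulus, whereas the conclusion of Lemma \ref{t:cov-angle} asks for an $(\epsilon,\tfrac12\lambda)$-convex annulus. This is absorbed by running the argument with parameter $\epsilon/2$ in place of $\epsilon$ throughout — i.e. first produce the $(\epsilon/2,\lambda,\delta')$ data, then adjust — so that the final annuli are genuinely $\epsilon$-good scale; one checks the constraints $0<10(\epsilon/2)<\delta'<\frac{1}{200n}$ remain satisfiable and that the frame bound $12\delta'$ can be relabeled $12\delta$ after adjusting $\delta$. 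The main obstacle, and the step deserving the most care, is precisely this propagation of constants: one must confirm that a \emph{single} application of Lemma \ref{t:adj-angle} — rather than an iteration over the $m$ existing annuli — suffices, which it does because Lemma \ref{t:adj-angle} is already stated to adjust the new annulus simultaneously against \emph{all} of $W^1,\dots,W^m$ (the sum over $i=1,\dots,m$ in \eqref{t:adj-angle.e2}), so no compounding of the $2\epsilon\to 4\epsilon\to\cdots$ or $\tfrac12\lambda\to\tfrac14\lambda\to\cdots$ type occurs within one stratification step. The $\delta\to 12\delta$ growth is the genuine cost, and it is this that forces the smallness hypothesis $\delta<\frac{1}{200n}$ to be interpreted as "small enough that finitely many $12$-fold expansions across the $n$ strata stay below $\tfrac{\pi}{2}$ in angle" when the lemma is later applied recursively in Section \ref{s:annuli}; within the present lemma, though, only one expansion is incurred.
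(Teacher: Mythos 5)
Your proof is correct and follows exactly the approach the paper intends (the paper only gives the one-sentence sketch just before the statement: apply Theorem \ref{t:cvx-cover} to cover the corner space by convex annuli, then adjust each one with Lemma \ref{t:adj-angle}). You also correctly flag the two genuine bookkeeping points — the $(2\epsilon,\tfrac12\lambda)$ versus $(\epsilon,\tfrac12\lambda)$ mismatch in the output of Lemma \ref{t:adj-angle}, and the fact that replacing $\widehat W^{\,\ell}$ by $\widetilde W^{\,\ell}$ must not break the covering — and your remedies (run with $\epsilon/2$; rely on the slack $A_{a}^{b}(p)\subseteq W\subseteq A_{0.5a}^{2b}(p)$ built into Definition \ref{d:cvx.ann} and the estimate \eqref{t:adj-angle.e3}) are the natural ones.
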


\begin{remark}\label{r:covex-cov}
	By choosing $\lambda_{i+1}=\frac12\lambda_i$, $\lambda_1=-\frac{2^n}\delta$, $\delta_{i+1}=12\delta_i$, $\delta_1=\frac\delta{200n\cdot 12^n}$ and $\epsilon=\min\{\frac{c(n)}{|\lambda_1|}, \frac1{10}\delta_1\}$, the above lemma can be applied on each recursively constructed $(\epsilon,\lambda_i, \delta_i)$-frame and $(\epsilon,\lambda_i, \delta_i)$-corner space for $i=1,2,\dots,n-1$. In this construction, we have $\lambda_i\le -\delta^{-1}$ and $\epsilon<\delta_i<\delta$ for every $i$. When we apply Lemma \ref{t:cov-angle}, we will assume this for every frame and corner space, unless otherwise stated. As a convention, an $(\epsilon,\lambda,\delta)$-corner space will be called a $\delta_0$-corner space, and an $(\epsilon,\lambda, \delta)$-frame will be called a $\delta_0$-frame, provided $\lambda=-c_1(n)\delta_0^{-1}$, $\epsilon=c_2(n)\delta_0$ and $\delta=c_3(n)\delta_0$.  Similarly, an $(\epsilon,\lambda)$-convex annulus will be called an $\epsilon_0$-convex annulus for $\lambda=-c_1(n)\epsilon_0^{-1}$ and $\epsilon=c_2(n)\epsilon_0$.
\end{remark}

\begin{remark}\label{r:non-frame}
	By the above covering theorem, on each corner space $M$, there are at most $C(n,\nu,\epsilon)$ points which do not admit a frame of next stratum. These points are the centers of the convex annuli in the covering of $M$. Therefore, away from a set of co-dimension 2, every point on $X$ admit an $\epsilon$-frame. 
\end{remark}

\section{Monotonic Formula on Intersection of Level Sets}\label{s:mono}
In this section, we establish a monotonic formula (Corollary \ref{c:mono.corner}). It gives a bound for the integration of $\Theta$ on the corner spaces, in terms of the derivatives of the volume of level sets. 

\subsection{Generalized Minkowski content}


\begin{definition} Let us list some definitions of rectifiable sets in this paper. 
	\begin{enumerate}
		\renewcommand{\labelenumi}{(\roman{enumi})}
		\setlength{\itemsep}{1pt}
		\item A set $E$ is said to be $m$-rectifiable if there exists a bounded set $\tilde E\subset\dR^m$ and a Lipschitz onto function $\Psi\colon \tilde E\to E$. 
		\item A set $E$ is said to be countably $m$-rectifiable if $E$ is a  union of countable collection of $m$-rectifiable sets. 
		\item A set $E$ is said to be $\cH^m$-rectifiable, if there exists a countably $m$-rectifiable set $F$ so that the Hausdorff measure $\cH^m(E\setminus F)=0$. 	
	\end{enumerate}
\end{definition}

\begin{definition}[p. 273 \cite{Fed69}]
  Let $0\le m\le n$ be integers and $E\subset \dR^n$ be a subset. The $m$-dimensional upper Minkowski content is
  \begin{align}
    {\mathscr M}^{*m}(E)=\limsup_{r\to 0^+}\frac{\cH^n(B_r(E))}{\omega(n-m)r^{n-m}},
    \label{s:mono.e1}
  \end{align}
  and the $m$-dimensional lower Minkowski content is
  \begin{align}
    {\mathscr M}_*^{m}(E)=\liminf_{r\to 0^+}\frac{\cH^n(B_r(E))}{\omega(n-m)r^{n-m}},
    \label{s:mono.e2}
  \end{align}
where $\omega(k)$ is the volume of unit ball in $\dR^k$. 
  If ${\mathscr M}^{*m}(E)={\mathscr M}_*^{m}(E)$, then the common value $\mathscr M^m(E)$ is called the $m$-dimensional Minkowski content.
\end{definition}

It is known that if $E$ is a closed $m$-rectifiable set, then $\mathscr M^m(E)=\cH^m(E)$ (Theorem 3.2.39 \cite{Fed69}). By definition, if $E$ is $\cH^m$-rectifiable, then ${\mathscr M}^{*m}(E)\ge\cH^m(E)$. We will need the following generalization for these results.

Let $f\colon E\to \dR_{\ge 0}$ be a bounded function. 

\begin{definition}
  Let $0\le m\le n$ be integers and $E\subset \dR^n$ be a subset. The $m$-dimensional upper $f$-Minkowski content is
  \begin{align}
    {\mathscr M}_f^{*m}(E)=\limsup_{r\to 0^+}\frac{\cH^n\left(\cup_{x\in E}B_{f(x)r}(x)\right)}{\omega(n-m)r^{n-m}},
    \label{s:mono.e3}
  \end{align}
  and the $m$-dimensional lower $f$-Minkowski content is
  \begin{align}
    {\mathscr M}_{*f}^{m}(E)=\liminf_{r\to 0^+}\frac{\cH^n\left(\cup_{x\in E}B_{f(x)r}(x)\right)}{\omega(n-m)r^{n-m}},
    \label{s:mono.e4}
  \end{align}
  If ${\mathscr M}_f^{*m}(E)={\mathscr M}_{*f}^{m}(E)$, then the common value is called the $m$-dimensional $f$-Minkowski content $\mathscr M_f^m(E)$.
\end{definition}
By the Caratheodory's construction in Section 2.10 \cite{Fed69}, we can substitute the estimation function $\zeta(S)=\omega(k)2^{-m}(\diam(U))^m$ by $\zeta_f(S)=\omega(m)f(x)2^{-m}(\diam(U))^m$, where $x\in U$, and  obtain the corresponding $f$-measures. In particular, we have the $m$-dimensional $f$-Hausdorff measure
\begin{align}
  \cH_f^m(E)
  &=\lim_{\delta\to 0^+}\inf\left\{\sum_{i=1}^\infty{\omega(k)f(x_i)2^{-m}(\diam(U)_i)^m}\colon x\in U_i, \, E\subseteq\bigcup_{i=1}^\infty U_i, \, \diam(U_i)<\delta \right\}
  \notag\\
  &=\int_E f(x)\operatorname d\cH^m.
  \label{s:mono.e5}
\end{align}
By substituting Lebegsue measure $\mathscr L^m(E)$ by the $f$-Lebegsue measure $\mathscr L_f^m(E)=\int_E f(x) d\mathscr L^m$, we obtain the $m$-dimensional $f$-Gross measure (\cite{Fed69}, 2.10.4)
\begin{align}
  \mathscr{G}_f^m(E)
  &=\lim_{\delta\to 0^+}\inf\left\{\sum_{i=1}^\infty\sup_{\mathfrak p\in\mathbf O^*(m,k)}\Big\{\mathscr L_f^m(\mathfrak p(U_i))\Big\}\colon  E\subseteq\bigcup_{i=1}^\infty U_i, \, \diam(U_i)<\delta \right\},
  \label{s:mono.e6}
\end{align}
where $\mathbf O^*(m,k)$ is the set of orthogonal projections $\mathfrak p\colon \dR^n\to\dR^m$ (\cite{Fed69}, 1.7.4) and $U_i$ are all Borel subsets.

Now let $u\colon \dR^n\to \dR_{\ge 0}$ be a bounded function. By the same proof as 3.2.37 \cite{Fed69} and Fubini's Theorem, we have
\begin{align}
  {\mathscr M}_{*u}^m(E)\ge \mathscr{G}_{u^{n-m}}^m(E).
  \label{s:mono.e7}
\end{align}
If $E$ is $\cH^m$-rectifiable, then by the same argument as in 3.2.26 \cite{Fed69}, we have  $$\mathscr{G}_f^m(E)=\cH_f^m(E).$$
Apply this to $f=u^{n-m}$. We have
\begin{align}
  {\mathscr M}_{*u}^m(E)\ge \mathscr{G}_{u^{n-m}}^m(E)=\cH_{u^{n-m}}^m(E)=\int_Eu^{n-m}(x)\operatorname d\cH^m.
  \label{s:mono.e8}
\end{align}
By a similar argument as in 3.2.38 and 3.2.39 \cite{Fed69}, one can prove
\begin{align}
	\mathscr M_u^m(E)=\int_Eu^{n-m}(x)\operatorname d\cH^m,
	\label{s:mono.e10}
\end{align}
if $E$ is $k$-rectifiable.

Since a finite dimensional Alexandrov space can be isometrically embedded into a finite dimensional Euclidean space, the above results also hold if $E$ is a subset of Alexandrov space. In our application, $m=k-2$ and the balls are intersected with a corner space $M$ of dimension $n=k-1$. That is, $n-m=1$ and
\begin{align}
	\liminf_{r\to 0^+}\frac{\cH^{k-1}\left(\cup_{x\in E}B_{u(x)r}(x,\Omega)\right)}{\omega(1)r}
	={\mathscr M}_{*u}^{k-2}(E)\ge \cH_{u}^{k-2}(E)=\int_Eu(x)\operatorname d\cH^{k-2}.
	\label{s:mono.e9}
\end{align}

%

\subsection{Rectifiability and dimension estimates}\label{s:r.d.s}
In this subsection, we let 
$$\{(W^1, p_1, T_1), (W^2, p_2, T_2), \dots,(W^m, p_m, T_m), (W, p, T)\}$$ 
be an $\epsilon_0$-frame and $\displaystyle M=\cap_{i=1}^m\partial W^i$ be an $\epsilon_0$-corner space with $\dim(M)=k=n-m$. In particular, for $m=0$, we let $M=X$. Define 
\begin{align}
	\cS^j(M, W)=\{x\in  M\cap \partial W\colon \text{$T_x(M\cap W)$ is not $(j+1)$-splitting}\}
\end{align}

\begin{lemma}\label{l:S-Theta-dim}  
	$\dim_\cH\left(\cS^{j}(M, W)\right)\le j$. 
\end{lemma}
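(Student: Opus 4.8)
The plan is to realize $\cS^j(M,W)$ as part of the $j$-th Burago--Gromov--Perelman stratum of a genuine Alexandrov space and then invoke the classical Hausdorff-dimension estimate for that stratum. The auxiliary space is $Y:=M\cap W$. The first — and main — point is that $Y$ is a $k$-dimensional Alexandrov space with curvature bounded below by some $\kappa'=\kappa'(n,\lambda)$, whose boundary is $\partial Y=M\cap\partial W$. Granting this, for $x\in M\cap\partial W$ one has $T_x(M\cap W)=T_xY$, so by definition $\cS^j(M,W)$ is exactly the set of boundary points of $Y$ whose tangent cone does not split off $\dR^{j+1}$; hence $\cS^j(M,W)\subseteq\cS^j(Y)$, the $j$-th stratum of the Alexandrov space $Y$, and therefore $\dim_\cH\cS^j(M,W)\le\dim_\cH\cS^j(Y)\le j$ by \cite{BGP}.

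To see that $Y=M\cap W$ is a $k$-dimensional Alexandrov space I would argue inductively, using the convex-annulus and frame structure from Section \ref{s:annuli}. Set $M_0=X$, and for $1\le \ell\le m$ let $M_\ell=M_{\ell-1}\cap\partial W^\ell$. Suppose inductively that $M_{\ell-1}$ is an Alexandrov space of dimension $n-\ell+1$. By (the proof of) Lemma \ref{l:cox-ann} the hypersurface $\partial W^\ell$ has extrinsic curvature at least $|\lambda|$ in $X$, and the $(\epsilon,\lambda,\delta)$-frame condition of Definition \ref{d:frame} — supplied by Lemma \ref{t:adj-angle} — says that $\nabla_xd_{\partial W^\ell}$ meets the normal directions $\nabla_xd_{\partial W^i}$, $i<\ell$, at angles close to $\frac\pi2$. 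This near-orthogonality is exactly the transversality under which the $\lambda$-concavity of $d_{\partial W^\ell}$ and the extrinsic-curvature bound of $\partial W^\ell$ descend, with a controlled loss, to the intrinsic geometry of $M_{\ell-1}$; by the extrinsic-curvature results of \cite{AB10} (Theorem~1.6 and Corollary~1.9, as used in the proof of Lemma \ref{l:cox-ann}), $M_\ell=M_{\ell-1}\cap\partial W^\ell$ is Alexandrov of dimension $n-\ell$. After $m$ steps $M=M_m$, and intersecting once more with the convex annulus $W$ — noting $p\in M$ and that $\{W^1,\dots,W^m,W\}$ is a frame, so $d_{\partial W}|_M$ remains $\lambda'$-concave by the same argument — produces $Y=M\cap W$ as a $k$-dimensional Alexandrov space with boundary $M\cap\partial W$ (the case $m=0$ being just $Y=W$, handled directly by Lemma \ref{l:cox-ann}).

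I expect this inductive step to be the main obstacle: one must check quantitatively that restricting the semiconcave functions $d_{\partial W^i}$ and $d_{\partial W}$ to the successive corner spaces keeps them semiconcave with uniform constants, so that the hypotheses of the results of \cite{AB10} are genuinely met at every stage — this is precisely where the near-orthogonality of the frame (Lemma \ref{t:adj-angle}) is indispensable. Once $Y$ is known to be Alexandrov, the final input is routine: the bound $\dim_\cH\cS^j(Z)\le j$ for the $j$-th stratum holds for Alexandrov spaces with boundary, which one deduces from the boundaryless case \cite{BGP} by treating the interior of $Y$ and the (boundaryless, $(k-1)$-dimensional) space $\partial Y$ separately, using that a splitting of the boundary tangent cone $T_x(\partial Y)=\partial(T_xY)$ propagates to a splitting of $T_xY$ — equivalently, a maximal round sphere among the directions extends to a join decomposition — and then inducting on dimension.
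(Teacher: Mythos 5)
Your central claim --- that $Y=M\cap W$ is a genuine $k$-dimensional Alexandrov space with boundary --- is exactly what the paper declines to assert, and in fact explicitly disclaims: in the proof of Lemma \ref{l:mono.corner} the author writes, ``Though $M\cap W_0$ is not known to be an Alexandrov space, the comparison theory still holds in its small neighborhood and for quasi-geodesics on $M\cap\partial W_0$.'' The Alexander--Bishop results you invoke (\cite{AB10}, Theorem~1.6 and Corollary~1.9) give extrinsic curvature bounds for a single level set inside the ambient space $X$ and semiconcavity of the associated distance function; they do not say that the iterated corner $M_\ell=M_{\ell-1}\cap\partial W^\ell$ carries an intrinsic lower curvature bound in its induced metric. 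Near-orthogonality of the frame controls angles, not curvature of the corner, and no theorem in the paper or in \cite{AB10} upgrades that to an Alexandrov structure on $M\cap W$. So your inductive step rests on an unproven premise, and the whole reduction to the boundary/interior BGP stratification of an Alexandrov space $Y$ collapses with it.

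The paper's own proof is the single sentence that the estimate ``follows by a standard density argument in \cite{BGP} or \cite{CC97-I}.'' That argument yields $\dim_\cH\cS^j\le j$ directly, without ever deciding whether $M\cap W$ is Alexandrov: at a point $x\in\cS^j(M,W)$ of positive upper $j$-dimensional density, blowing up produces a tangent cone $T_x(M\cap W)$ with an extra Euclidean factor, contradicting $x\in\cS^j(M,W)$. All this needs is that tangent cones of $M\cap W$ exist, are metric cones, behave well under pointed rescaled limits, and obey the splitting dichotomy --- properties the paper secures for corner spaces via the extremal-subset comparison theory it keeps appealing to (see the remark in Lemma \ref{l:mono.corner} just quoted and Lemma \ref{l:etrm-cone}). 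Your route would be cleaner if one could prove that such corners are Alexandrov, but as matters stand that is a genuine gap, while the density argument sidesteps the question entirely.
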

\begin{proof} 
	This follows by a standard density argument in \cite{BGP} or \cite{CC97-I}. 
\end{proof}

Recall that the singularity angle $\Theta(x, \partial W)=\pi-\beta$, if the tangent cone $T_x(W)=\dR^{n-2}\times C([0,\beta])$,  otherwise $\Theta(x, \partial W)=\pi$. Let $\cS(\Theta,\partial W)=\{x\in \partial W\colon \Theta(x,\partial W)>0\}$. 
The following lemma gives a more refined stratification structure on $\cS(\Theta,\partial W)$, away from a co-dimension 2 subset. 

\begin{lemma}\label{l:frame.sing.direct} Given an $\epsilon_0$-frame $\{(W^1, p_1, T_1), (W^2, p_2, T_2), \dots,(W^m, p_m, T_m), (W, p, T)\}$, there exists a set $\mathcal D\subset X$ with $\dim_\cH(\mathcal D)\le n-2$ so that the following hold for any corner space $\displaystyle M=\cap_{i=1}^m\partial W^i_{s_i}$ and any $0\le t< T$. 
		\begin{enumerate}
		\renewcommand{\labelenumi}{(\roman{enumi})}
		\setlength{\itemsep}{1pt}
		\item $T_x(W_t)$ is $(n-2)$-splitting for any $x\in \cS(\Theta, \partial W_t)\setminus\mathcal D$. 
		\item For any $x\in M\cap\cS(\Theta, \partial W_t)\setminus\mathcal D$, either $\Theta(x,\partial W^i_{s_i})=0$ for every $1\le i\le m$, or $T_x(M\cap W_t)=\dR^{k-1}\times[0,\infty)$. 
	\end{enumerate}
\end{lemma}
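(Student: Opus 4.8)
The plan is to combine a stratified dimension-reduction argument for the singularity angle $\Theta$ with the frame structure, exhibiting the exceptional set $\mathcal D$ as a countable union of low-dimensional strata coming from different directions. For part (i), I would first show that if $T_x(W_t)$ is not $(n-2)$-splitting, then $x$ lies in $\cS^{n-3}(M,W_t)$ in the notation introduced above (with $M=X$ when $m=0$), and Lemma \ref{l:S-Theta-dim} gives $\dim_\cH \le n-3 \le n-2$. The key observation is that at a point where $\Theta(x,\partial W_t)$ is defined to be less than $\pi$, the tangent cone $T_x(W_t)$ is a half-space-type cone $\dR^j\times C(\Sigma)$ with $C(\Sigma)$ a two-dimensional cone with boundary $[0,\beta]$; if $j<n-2$ the ``round'' part $\Sigma$ has dimension $\ge 1$ and is not a point, so $x$ is in a lower stratum. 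To make this uniform over all $t\in[0,T)$ simultaneously (we want a single set $\mathcal D$), I would take a countable dense set of parameters $t$, use that $\Theta(\phi_t(x),\partial W_t)$ varies upper-semicontinuously in $t$ by the inequality $\displaystyle\limsup_{t\to 0^+}\Theta(\phi_t(x),\partial W_t)\le\Theta(x,\partial W)$ established in Section \ref{s:defn}, and transport the exceptional set along the (bi-Lipschitz on compact subsets) gradient flow $\phi_t$ of $d_{\partial W}$. Since bi-Lipschitz maps preserve Hausdorff dimension, the flowed-out union stays in dimension $n-2$.

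For part (ii), I would argue inside the corner space. Fix $x\in M\cap\cS(\Theta,\partial W_t)$ and suppose $\Theta(x,\partial W^{i}_{s_i})>0$ for some $i$. Because $\{W^1,\dots,W^m,W\}$ is an $\epsilon_0$-frame, the gradients $\nabla_x d_{\partial W^j_{s_j}}$ and $\nabla_x d_{\partial W_t}$ are pairwise almost-orthogonal, so the tangent cone $T_x(X)$ splits off an $\dR^{m}$ (one factor from the convexity of each $\partial W^j$ at $x$, via Alexander-Bishop type arguments already invoked in Lemma \ref{l:cox-ann}), plus an additional half-line $[0,\infty)$ in the $\nabla d_{\partial W_t}$-direction, and $T_x(M\cap W_t)$ is the orthogonal complement of the $m$ annulus directions inside $T_x(W_t)$. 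The condition $\Theta(x,\partial W^i_{s_i})>0$ forces the tangent cone of $W^i$ at $x$ to be $\dR^{n-2}\times C([0,\beta_i])$ with $\beta_i<\pi$; intersecting with the remaining almost-orthogonal convex hypersurfaces and with $W_t$ and matching dimensions ($\dim(M\cap W_t)=k-1=n-m-1$) leaves exactly room for $T_x(M\cap W_t)=\dR^{k-1}\times[0,\infty)$, i.e.\ it is the simplest possible non-splitting cone, which is the claim. Points where this dimension count fails — where the cone of $W^i$ is more degenerate than a single half-space factor, or where the frame almost-orthogonality degenerates at the finitely many annulus centers — form, by Lemma \ref{l:S-Theta-dim} applied on $M$ and Remark \ref{r:non-frame}, a set of dimension $\le n-2$, which we throw into $\mathcal D$.

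The main obstacle I anticipate is making the ``simultaneously for all $t$'' part of (i) genuinely uniform: a priori the bad set could move with $t$ and sweep out something of full dimension, so the argument really relies on (a) upper semicontinuity of $\Theta$ along the flow, (b) bi-Lipschitz regularity of the level-set flow $\phi_t$ on $W\setminus B_\rho(p)$ for the relevant $t$ (which follows from the $\frac14\lambda$-concavity of $d_{\partial W_t}$ on $W_t$ proved in Lemma \ref{l:cox-ann}, hence from the gradient-flow theory for semiconcave functions on Alexandrov spaces), and (c) a countable-dense-$t$ reduction using monotonicity of the stratum membership. Once these three ingredients are assembled the rest is bookkeeping with Lemma \ref{l:S-Theta-dim}; I would state the flow-regularity as a cited fact rather than reprove it.
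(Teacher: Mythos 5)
Your treatment of part (i) matches the paper's: both use Lemma \ref{l:S-Theta-dim} to control $\cS^{n-3}(X,W_t)$ and take the union over $t$. In fact you are somewhat more careful than the paper, which states $\dim_\cH\bigl(\cup_{0\le t<T}\cS^{n-3}(X,W_t)\bigr)\le n-2$ without comment; your suggestion to transport the exceptional set along the bi-Lipschitz level-set flow $\phi_t$ is a sensible way to justify that one-line claim, and is compatible with the paper's intent. So part (i) is fine.

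Part (ii) has a genuine gap. The heart of the paper's argument is a \emph{uniqueness claim}: at a point $x\in M\cap\cS(\Theta,\partial W_t)\setminus\mathcal D$, at most one index $i$ can have $\Theta(x,\partial W^i_{s_i})>0$, because if $\alpha_i<\pi$ and $\alpha_j<\pi$ for $i\ne j$, then (away from $\mathcal D$) both $T_x(\partial W^i_{s_i}\cap W_t)$ and $T_x(\partial W^j_{s_j}\cap W_t)$ are $(n-2)$-splitting in almost-orthogonal directions, and their coexistence forces $\beta=\pi$, i.e. $\Theta(x,\partial W_t)=0$, contradicting $x\in\cS(\Theta,\partial W_t)$. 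Your write-up never establishes (or invokes) this exclusion. You pass directly from ``$\Theta(x,\partial W^i_{s_i})>0$ for some $i$'' to ``$T_x(M\cap W_t)=\dR^{k-1}\times[0,\infty)$'' via a dimension count, but without ruling out a second singular direction the count does not pin down the cone: two non-trivial wedge factors among the $W^j$'s could a priori make $T_x(M\cap W_t)$ a lower-dimensional or non-half-space cone, and the conclusion would fail. (The paper, by contrast, first uses the claim to reduce to the case where exactly one $W^i$ is a wedge and the rest are half-spaces, then inducts.) You also have a dimension slip: $\dim(M\cap W_t)=k$, not $k-1$, since $W_t$ is full-dimensional; only $M\cap\partial W_t$ has dimension $k-1$. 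To close the gap you need the orthogonality-versus-wedge exclusion argument as a lemma before the dimension bookkeeping.
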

\begin{proof}
  	By Lemma \ref{l:S-Theta-dim}, $\dim_\cH\left(\cS^{n-3}(X, W_t)\right)\le n-3$ and $\dim_\cH\left(\cup_{0\le t< T}\cS^{n-3}(X,W_t)\right)\le n-2$. Away from $\cup_{0\le t< T}\cS^{n-3}(X,W_t)$, we can assume $T_x(W_t)=\dR^{n-2}\times C([0,\beta])$ with $0<\beta<\pi$ and $T_x(W^i)=\dR^{n-2}\times C([0,\alpha_i])$ with $0<\alpha_i\le\pi$ for every $i$. For $\alpha_i<\pi$, let $\mathcal D_s^i=\cS^{n-3}(\partial W^i_s, W)$. By Lemma \ref{l:S-Theta-dim} again, we have $\dim_\cH\left(\mathcal D_s^i\right)\le n-3$ and $\dim_\cH\left(\cup_{s, \alpha_i<\pi}\mathcal D_s^i\right)\le n-2$. 
  	
  Now let $x\in M\cap\cS(\Theta, \partial W_t)$. Away from the co-dimension 2 set $\cup_{s, \alpha_i<\pi}\mathcal D_s^i$ we have either $\alpha_i=\pi$ or $T_x(\partial W^i_{s_i}\cap W_t)$ is $(n-2)$-splitting. We claim that there doesn't exist $i\neq j$ so that $\alpha_i<\pi$ and $\alpha_j<\pi$. If not, then there exist $i\neq j$ and $x\in\partial W^i_{s_i}\cap\partial W^j_{s_j}\cap\partial W_t$, so that both $T_x(\partial W^i_{s_i}\cap W_t)$ and $T_x(\partial W^j_{s_j}\cap W_t)$ are $(n-2)$-splitting. 
  	By the almost orthogonal property of $\epsilon_0$-frame, this implies $\beta=\pi$, a contradiction. 
  	
  	 By the claim, there exists at most one $i$ so that $T_x(\partial W^i_{s_i}\cap W_t)$ is $(n-2)$-splitting, and for every $j\neq i$, we have $\alpha_j=\pi$ and $T_x(W^i_{s_i}\cap W_t)=\dR^{n-1}\times[0,\infty)$. By induction, it's easy to show that $T_x(M\cap W_t)$ is $(k-1)$-splitting. However, $\dim(M\cap W_t)=k$. Then we have that $T_x(M\cap W_t)=\dR^{k-1}\times[0,\infty)$. 
\end{proof}

Define $\Theta^M(x,\partial W)=\pi-\beta$ if $T_x(M\cap W)=\dR^{k-2}\times C([0,\beta])$, otherwise, define $\Theta^M(x,\partial W)=\pi$. Let $\cS^M(\Theta,\partial W)=\{x\in M\cap \partial W\colon \Theta^M(x,\partial W)>0\}$.

\begin{lemma}\label{l:int-ext-theta} There exists $c(n)>0$ small so that the following hold for any $0<\epsilon_0<c(n)$. 
	If $\Theta(x,\partial W^i)=0$ for every $i$, then 
	\begin{align}
		\Theta(x,\partial W)\le \Theta^M(x,\partial W)\le (1+o(\epsilon_0))\Theta(x,\partial W).
		\label{l:int-ext-theta.e1}
	\end{align}
	Let $\mathcal D$ be the set defined in Lemma \ref{l:frame.sing.direct}. Then we have
		\begin{align}
		\cS^M(\Theta,\partial W)\setminus\mathcal D=\cS(\Theta,\partial W)\cap M\setminus\mathcal D.
		\label{l:int-ext-theta.e2}
	\end{align}	
\end{lemma}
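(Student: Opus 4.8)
The plan is to pass to tangent cones at $x$, reduce (\ref{l:int-ext-theta.e1}) to a statement about how the linear subspace $T_x(M)$ slices the wedge $T_x(W)$ inside the Euclidean tangent cone, and then deduce (\ref{l:int-ext-theta.e2}) from (\ref{l:int-ext-theta.e1}) together with Lemma \ref{l:frame.sing.direct}.

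First I reduce everything to the tangent cone $T:=T_x(X)$; the quantities $\Theta$, $\Theta^M$ and the sets $\cS(\Theta,\partial W)$, $\cS^M(\Theta,\partial W)$ are determined by $T_x(W^i)$, $T_x(W)$, $T_x(M\cap W)$. If $m=0$ then $M=X$ and both assertions are trivial, so assume $m\ge 1$. From $\Theta(x,\partial W^1)=0$ the cone $T_x(W^1)$ is a flat closed half–space $\dR^{n-1}\times\dR_{\ge0}$; since $W^1$ is locally convex, geodesics of $T$ joining two points of $T_x(\partial W^1)$ stay in that set (inside $T_x(W^1)$ they are Euclidean segments), so $T_x(\partial W^1)$ is a flat totally geodesic $\dR^{n-1}\subset T$. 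Splitting $T$ along a Euclidean frame of $T_x(\partial W^1)$ gives $T=\dR^{n-1}\times T'$ with $T'$ a one–dimensional nonnegatively curved cone; as $X$ — hence $T$ — has empty boundary, $T'=\dR$ and $T=\dR^n$. Consequently each $T_x(W^i)=H^i$ is a closed half–space with inner unit normal $\nu^i=\nabla_xd_{\partial W^i}$, the $\epsilon_0$–frame inequalities make the $\nu^i$ $O(\epsilon_0)$–orthonormal with $\langle\nu^i,\nabla_xd_{\partial W}\rangle\in[-2\delta,-\delta]$, and $T_x(M)=\bigcap_iT_x(\partial W^i)=\mathrm{span}\{\nu^1,\dots,\nu^m\}^{\perp}=:V$, a $k$–dimensional subspace ($k=n-m$). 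Since we may assume $x\notin\mathcal D$ (the shape of $T_x(M\cap W)$ entering the definition of $\Theta^M$ is well behaved only off a codimension $\ge 2$ set), Lemma \ref{l:frame.sing.direct}(i) gives $T_x(W)=\dR^{n-2}\times C([0,\beta])$ with $\Theta(x,\partial W)=\pi-\beta$; write $L=\dR^{n-2}$ for its edge and $\Pi=L^{\perp}\cong\dR^2$ for the plane carrying the planar wedge $C([0,\beta])$, whose bisector is $\nabla_xd_{\partial W}$.

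The key geometric point is that $V$ is $o(\epsilon_0)$–close to containing $\Pi$, with the deviation controlled by the defect $\pi-\beta$. The frame inequality already forces $\nu^i$ to be $O(\epsilon_0)$–orthogonal to the bisector $\nabla_xd_{\partial W}$; orthogonality in the remaining direction of $\Pi$ is extracted from the stratified covering — for $x\notin\mathcal D$ with all $\Theta(x,\partial W^i)=0$ each slice $\partial W^i\cap W$ splits off $\dR^{n-2}$ by (the proof of) Lemma \ref{l:frame.sing.direct}, which together with the way $W$ is produced from $W^1,\dots,W^m$ in Lemma \ref{t:adj-angle} pins $T_x(\partial W^i)$ to contain the edge $L$ up to $o(\epsilon_0)$. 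Granting this, $V=(V\cap L)\oplus\Pi_V$ with $\dim(V\cap L)=k-2$ and $\Pi_V$ a $2$–plane agreeing with $\Pi$ up to an $o(\epsilon_0)$ rotation, so cutting $T_x(W)$ by $V$ yields
\begin{align*}
	T_x(M\cap W)=V\cap T_x(W)=\dR^{k-2}\times C([0,\beta']),
\end{align*}
the factor $\dR^{k-2}$ being $V\cap L$ and $C([0,\beta'])$ a convex subcone of $C([0,\beta])$; hence $\beta'\le\beta$, while the near–alignment of $V$ with $\Pi$ gives $\beta-\beta'\le o(\epsilon_0)(\pi-\beta)$. Therefore
\begin{align*}
	\Theta(x,\partial W)=\pi-\beta\ \le\ \pi-\beta'=\Theta^M(x,\partial W)\ \le\ (\pi-\beta)+o(\epsilon_0)(\pi-\beta)=(1+o(\epsilon_0))\,\Theta(x,\partial W),
\end{align*}
which is (\ref{l:int-ext-theta.e1}); when $\beta=\pi$ the same computation forces $\beta'=\pi$, i.e. $\Theta^M(x,\partial W)=0$, so the chain is trivially true.

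Finally (\ref{l:int-ext-theta.e2}). The inclusion $\cS^M(\Theta,\partial W)\setminus\mathcal D\subseteq\cS(\Theta,\partial W)\cap M\setminus\mathcal D$ is immediate: if $\Theta^M(x,\partial W)>0$ then, by the upper bound in (\ref{l:int-ext-theta.e1}) applied at such $x$ (where all $\Theta(x,\partial W^i)=0$, a property we may assume off $\mathcal D$), $\Theta(x,\partial W)\ge(1+o(\epsilon_0))^{-1}\Theta^M(x,\partial W)>0$. Conversely let $x\in\cS(\Theta,\partial W)\cap M\setminus\mathcal D$. By Lemma \ref{l:frame.sing.direct}(ii), either $\Theta(x,\partial W^i)=0$ for all $i$ — then (\ref{l:int-ext-theta.e1}) gives $\Theta^M(x,\partial W)\ge\Theta(x,\partial W)>0$, so $x\in\cS^M(\Theta,\partial W)$ — or $T_x(M\cap W)=\dR^{k-1}\times[0,\infty)$; but the latter forces $\Theta(x,\partial W^{i_0})>0$ for the (unique, by the proof of Lemma \ref{l:frame.sing.direct}) bad index $i_0$, and then, the singular edges of $T_x(W^{i_0})$ and $T_x(W)$ being transverse off a further codimension $\ge 2$ set, $T_x(\partial W^{i_0}\cap W)$ fails to split off $\dR^{n-2}$ and $x$ lies in (the harmless enlargement of) $\mathcal D$ — excluded. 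Hence the two sets coincide off $\mathcal D$.

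The step I expect to be the main obstacle is the quantitative orthogonality claim in the second paragraph. The $\epsilon_0$–frame only directly controls the angle between $\nu^i$ and the single direction $\nabla_xd_{\partial W}$, whereas one needs $\nu^i$ almost orthogonal to the whole singular $2$–plane $\Pi$ of $W$, and — crucially for obtaining the multiplicative rather than merely additive bound in (\ref{l:int-ext-theta.e1}) — with residual tilt proportional to $\pi-\beta$. Extracting this from the splitting information of Lemma \ref{l:frame.sing.direct} together with the explicit construction of $W$ (Lemmas \ref{l:cox-ann} and \ref{t:adj-angle}), and checking that a small tilt of the cutting subspace perturbs the wedge angle $\beta$ only to the corresponding order, is the technical heart of the argument; when the alignment is exact one gets $\Theta^M(x,\partial W)=\Theta(x,\partial W)$ on the nose.
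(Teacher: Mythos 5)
Your overall strategy matches the paper's: pass to tangent cones, write $T_x(W)=\dR^{n-2}\times C([0,\beta])$ with edge $L=\dR^{n-2}$ and singular $2$-plane $\Pi=L^{\perp}$, let $V=T_x(M)=\mathrm{span}\{\nu^1,\dots,\nu^m\}^{\perp}$ be the cutting subspace, argue $V$ is nearly aligned with $\Pi$, and read off the angle comparison; your deduction of (\ref{l:int-ext-theta.e2}) from (\ref{l:int-ext-theta.e1}) and Lemma \ref{l:frame.sing.direct}(ii) also mirrors the paper's sandwich argument. The paper compresses the analytic step to one sentence --- ``direct computation shows $\tan\frac{\beta'}{2}=\tan\frac\beta2\cdot(1-o(\epsilon_0))$'' --- while you try to spell out what geometric fact underlies it. You are right that the alignment of $V$ with $\Pi$ is the technical heart, and right that it is not immediately given.

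There is, however, a genuine error in your attempted elaboration. Wanting $V\supseteq\Pi$ up to $o(\epsilon_0)$ is equivalent to each $\nu^i$ being nearly orthogonal to $\Pi$, i.e.\ $\nu^i$ nearly in $L$. Your intermediate claim that the construction ``pins $T_x(\partial W^i)$ to contain the edge $L$ up to $o(\epsilon_0)$'' says $L\subseteq\nu^{i\perp}$, i.e.\ $\nu^i\perp L$, i.e.\ $\nu^i$ nearly in $\Pi$ --- the opposite of what you need, and impossible for $m\ge 3$ since $m$ nearly orthonormal vectors cannot all lie near the $2$-plane $\Pi$. The supporting appeal to Lemma \ref{l:frame.sing.direct} is also a misreading: its proof only concludes that $T_x(\partial W^i\cap W_t)$ is $(n-2)$-splitting in the branch $\alpha_i<\pi$, i.e.\ when $\Theta(x,\partial W^i)>0$; under the hypothesis $\Theta(x,\partial W^i)=0$ of the present lemma, that dichotomy yields no splitting statement about the slice. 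Note finally that the frame inequality $\langle\nu^i,\nabla_x d_{\partial W}\rangle\in[-2\delta,-\delta]$ constrains only the component of $\nu^i$ along the bisector direction of $\Pi$; the component along the transverse direction $w\in\Pi$ is not controlled by it, and a nonzero $w$-component tilts the cut so as to \emph{increase} the wedge angle (giving $\beta'>\beta$). So the missing content --- bounding the $w$-component --- has to come from the explicit construction of $W$ in Lemmas \ref{l:cox-ann} and \ref{t:adj-angle}, and your proposed route does not supply it.
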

\begin{proof}
	Let $T_x(W)=\dR^{n-2}\times C([0,\beta])$ and $T_x(M\cap W)=\dR^{k-2}\times C([0,\beta'])$. Because the intersection angles are all cute, we have $\beta'\le\beta$. Direct computation shows that $\tan\frac{\beta'}{2}=\tan\frac\beta2\cdot(1-o(\epsilon_0))$. Then (\ref{l:int-ext-theta.e1}) follows. 
	
		Let $\cS_\epsilon(\Theta,\partial W)=\{x\in\partial W\colon \Theta(x,\partial W)\ge\epsilon\}$ and $\cS^M_\epsilon(\Theta,\partial W)=\{x\in M\cap \partial W\colon \Theta^M(x,\partial W)\ge\epsilon\}$. By Lemma \ref{l:frame.sing.direct} and (\ref{l:int-ext-theta.e1}), we have
	\begin{align}
		\cS_{2\epsilon}(\Theta,\partial W)\cap M\setminus\mathcal D
		\subseteq\cS^M_\epsilon(\Theta,\partial W)\setminus\mathcal D
		\subseteq\cS_\epsilon(\Theta,\partial W)\cap M\setminus\mathcal D
	\end{align}
for $0<\epsilon<\epsilon_0$ small.
Then the desired result follows. 
\end{proof}


\begin{lemma}\label{l:etrm-cone}
	Let $M$ be an extremal subset in $X\in\Alexnk$ and $p\in M$. For any $\epsilon>0$, there exists an extremal subset $Z\subseteq \Sigma_p(X)$, $p^*\in C(Z)$ and $r=r(p,\epsilon)>0$ so that
	$d_{GH}(B_s(p), B_s(p^*))<\epsilon s$ for any $0<s\le r$, with respect to their extrinsic metrics.
\end{lemma}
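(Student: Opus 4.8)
The plan is to identify $Z$ with the space of directions $\Sigma_p M$ of the extremal subset $M$ at $p$ and $p^*$ with the cone vertex $o\in C(\Sigma_p X)=T_p(X)$, and then to derive the estimate by combining two inputs: the existence and uniqueness of the tangent cone $T_p(X)=C(\Sigma_p X)$ in Alexandrov geometry, and the structure theory of extremal subsets, which guarantees that $\Sigma_p M$ is an extremal subset of $\Sigma_p X$ and that the blow-ups of the pair $(X,M)$ at $p$ converge to the pair $\big(C(\Sigma_p X),\,C(\Sigma_p M)\big)$.

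First I would recall that for $X\in\Alexnk$ the pointed rescalings $(\lambda^{-1}X,p)$ converge in the Gromov--Hausdorff sense to $(C(\Sigma_p X),o)$ as $\lambda\to 0^+$ \cite{BGP}; concretely, given $\epsilon>0$ there is $r=r(p,\epsilon)>0$ with $d_{GH}\big(B_s(p),B_s(o)\big)<\tfrac12\epsilon s$ for every $0<s\le r$. Next I would invoke the basic facts about extremal subsets (Perelman--Petrunin): the set $\Sigma_p M=\{\lim_i\uparrow_p^{x_i}\colon x_i\in M,\ x_i\to p\}$ is an extremal subset of $\Sigma_p X$, its Euclidean cone $C(\Sigma_p M)$ is an extremal subset of $C(\Sigma_p X)=T_p(X)$, and the rescalings of $M$ (with the metric induced from $X$) Hausdorff-converge, inside $\lambda^{-1}X$, to $C(\Sigma_p M)$ inside $C(\Sigma_p X)$. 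Setting $Z=\Sigma_p M$, $p^*=o$, and gluing the two convergences, the Gromov--Hausdorff approximations $B_s(p)\to B_s(p^*)$ can be chosen to carry $M\cap B_s(p)$ to within $\epsilon s$ of $C(Z)\cap B_s(p^*)$, which is the assertion with respect to the extrinsic metrics.

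The delicate point, which I expect to be the main obstacle, is the uniformity of the estimate in $s$: one needs $d_{GH}\big(B_s(p),B_s(p^*)\big)<\epsilon s$ simultaneously for all $0<s\le r$, whereas convergence of rescalings is a priori only a statement as $\lambda\to 0^+$. I would handle this by a compactness/contradiction argument. If the conclusion failed for some $\epsilon>0$, there would be $s_i\to 0^+$ so that $(B_{s_i}(p),M\cap B_{s_i}(p))$ stays $\epsilon s_i$-far from $(B_{s_i}(p^*),C(Z')\cap B_{s_i}(p^*))$ for every extremal $Z'\subseteq\Sigma_p X$ and every $p^*$. Rescaling by $s_i^{-1}$ and passing to a subsequence, $s_i^{-1}X\to C(\Sigma_p X)$ by uniqueness of the tangent cone, while $s_i^{-1}M$ converges to an extremal subset of $C(\Sigma_p X)$ which, being a pointed blow-up limit of $M$ at $p$, must be the cone $C(\Sigma_p M)$; taking $Z'=\Sigma_p M$ and $p^*=o$ then contradicts the choice of $s_i$. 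This also shows $Z$ is independent of the approximating sequence and finishes the proof. The remaining verifications — that the two Gromov--Hausdorff approximations can be combined so that $M$ is sent to $C(Z)$, and that extremality is preserved under non-collapsed limits — are standard density and limit arguments.
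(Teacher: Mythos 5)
Your proof is correct and takes essentially the same approach as the paper: both identify $Z=\Sigma_pM$ (the space of directions of the extremal subset, via the known fact that $T_p(M)=C(\Sigma_pM)$ with $\Sigma_pM$ extremal in $\Sigma_p(X)$) and derive the uniform-in-$s$ estimate from uniqueness of tangent cones, which the paper phrases as choosing $r$ to be the smallest bad scale in the sense of \cite{LiNab20}. Two minor observations: the compactness/contradiction step is dispensable, since pointed Gromov--Hausdorff convergence of the rescalings $\lambda^{-1}(X,M,p)\to\bigl(T_pX,\,C(\Sigma_pM),\,o\bigr)$ as $\lambda\to 0^+$ already yields the estimate simultaneously for all $s\le r$; and the negation you write down (a single sequence $s_i$ bad for \emph{every} $Z'$ and $p^*$) is stronger than the true negation of the lemma, though this is harmless because you fix the candidate $Z'=\Sigma_pM$, $p^*=o$ before deriving the contradiction.
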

\begin{proof}
	It is known that the tangent cone $T_x(M)$ is a metric cone over an extremal subset $Z\subseteq \Sigma_x(X)$. Then the result follows by choosing $r$ as the smallest bad scale at $x$, defined as in \cite{LiNab20}.
\end{proof}

\begin{lemma}\label{l:S-Theta-rect} Let $\mathcal D$ be the set defined in Lemma \ref{l:frame.sing.direct}. Then $\cS^M(\Theta,\partial W)\cap M\setminus\mathcal D$ is $\cH^{k-2}$-rectifiable. 
\end{lemma}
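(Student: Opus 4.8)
The plan is to slice $\cS^M(\Theta,\partial W)\cap M\setminus\mathcal D$ by the size of the singularity angle and to produce local bi-Lipschitz charts into $\dR^{k-2}$ on each slice. For $\delta>0$ set $E_\delta=\{x\in M\cap\partial W\setminus\mathcal D\colon \Theta^M(x,\partial W)\ge\delta\}$, so that $\cS^M(\Theta,\partial W)\cap M\setminus\mathcal D=\bigcup_{j\ge1}E_{1/j}$; since a countable union of $\cH^{k-2}$-rectifiable sets is again $\cH^{k-2}$-rectifiable, it suffices to treat a single $E_\delta$. First I would record the local structure along $E_\delta$: if $x\in E_\delta$, then $\Theta^M(x,\partial W)>0$ forces, via Lemma \ref{l:frame.sing.direct}(ii), that $\Theta(x,\partial W^i)=0$ for every $i$, and then Lemma \ref{l:frame.sing.direct}(i) together with Lemma \ref{l:int-ext-theta} gives $T_x(M\cap W)=\dR^{k-2}\times C([0,\beta_x])$ with $0<\beta_x\le\pi-c(n)\delta$. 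Thus at every point of $E_\delta$ the tangent cone splits off $\dR^{k-2}$ isometrically but not $\dR^{k-1}$, and in the model $\dR^{k-2}\times C([0,\beta])$ the locus $\{\Theta^M>0\}$ is precisely the $(k-2)$-plane $\dR^{k-2}\times\{\mathrm{apex}\}$, on which the projection to the Euclidean factor is an isometry.

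Next, fixing $x_0\in E_\delta$, I would use this splitting — together with the conical tangent structure of the extremal set $M$ (Lemma \ref{l:etrm-cone}) and the good-scale structure of the convex annuli of the ambient frame — to choose points $q_1,\dots,q_{k-2}$ and a radius $r_0=r_0(x_0,\delta)>0$ for which the distance map $F=(d_{q_1},\dots,d_{q_{k-2}})$ is $2$-Lipschitz on $M\cap W\cap B_{r_0}(x_0)$ and restricts on $E_\delta\cap B_{r_0}(x_0)$ to a bi-Lipschitz embedding with constant $c(n,\delta)$: the $d_{q_i}$ approximate near $x_0$ the coordinates of the $\dR^{k-2}$-factor, and because the same splitting persists, up to an error depending only on $\delta$ after shrinking $r_0$, at all nearby points of $E_\delta$, the map $F$ behaves on $E_\delta$ like that Euclidean projection. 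Then $F^{-1}$ is a Lipschitz surjection from the bounded set $F(E_\delta\cap B_{r_0}(x_0))\subset\dR^{k-2}$ onto $E_\delta\cap B_{r_0}(x_0)$, so the latter is $(k-2)$-rectifiable. Covering the separable set $E_\delta$ by countably many such balls shows $E_\delta$ is countably $(k-2)$-rectifiable, hence $\cH^{k-2}$-rectifiable, and the lemma follows from the reduction above.

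The crux is the uniform lower Lipschitz bound for $F$ on $E_\delta\cap B_{r_0}(x_0)$, i.e.\ that the $\dR^{k-2}$-splitting does not degenerate along $E_\delta$ at a definite scale. I would prove this by contradiction and a blow-up: if it failed, rescaling $M\cap W$ about points of $E_\delta$ at which $F$ nearly collapses distances and passing to a pointed Gromov--Hausdorff limit — using uniqueness of Alexandrov tangent cones and the semicontinuity of the singularity angle exploited in Section \ref{s:defn} — would yield a limit space that still splits off $\dR^{k-2}$ yet has two distinct points with the same image under the limiting projection, which is absurd. A more robust alternative is to verify that $E_\delta$ is Reifenberg-flat at all sufficiently small scales, which again follows from the fact that every tangent cone along $E_\delta$ splits off $\dR^{k-2}$, and then to invoke the rectifiable Reifenberg theorem as in the treatment of the quantitative strata in \cite{LiNab20}; I expect this degeneration issue to be the only delicate point, with the rest being covering and packing bookkeeping.
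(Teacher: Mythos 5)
Your overall strategy matches the paper's closely: slice off the positive-angle set, use Lemma~\ref{l:frame.sing.direct} and Lemma~\ref{l:int-ext-theta} to get the $(k-2)$-splitting $T_x(M\cap W)=\dR^{k-2}\times C([0,\beta_x])$ at every point of the slice, build a distance map into $\dR^{k-2}$ in the splitting directions following \cite{LiNab20} within the good scale from Lemma~\ref{l:etrm-cone}, show it is bi-Lipschitz on the singular set, and cover countably. Your $E_\delta$ plays the same role as the paper's $U_\epsilon=\cS^M_\epsilon(\Theta,\partial W)\setminus(\cS^{k-3}(M,W)\cup\mathcal D)$, modulo also discarding the $\cH^{k-2}$-null stratum $\cS^{k-3}(M,W)$.

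Where you diverge, and where there is a genuine gap, is the set on which the bi-Lipschitz lower bound is claimed. You assert that $F$ is bi-Lipschitz on all of $E_\delta\cap B_{r_0}(x_0)$ for a single chart radius $r_0=r_0(x_0,\delta)$. The paper does not claim this, and the reason is essential: for the splitting theory of \cite{LiNab20} to control the direction of the singular ray at a nearby point $y\in E_\delta$, one needs $y$ itself to enjoy a definite good scale, not merely to lie in a good-scale ball around $x_0$. This is why the paper's charts are restricted to $\Gamma_x^{\,t}=B_{t/2}(x)\cap\{y\in U_\epsilon:\ r_y>2t\}$, with $r_y$ the good-scale radius at $y$ from Lemma~\ref{l:etrm-cone}, and the covering is a double union over the threshold $\epsilon$ \emph{and} the scale $t$. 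Points $y\in E_\delta\cap B_{r_0}(x_0)$ with $r_y\ll r_0$ can have singular rays pointing in directions not controlled by the splitting map around $x_0$, so injectivity of $F$ on the full slice is not available. Your proposed blow-up does not rescue this: if $d(y_i,y_i')\to 0$ while the good scale at $y_i'$ degenerates faster, the rescaled limits are iterated tangent cones rather than first tangent cones, so the uniqueness of Alexandrov tangent cones and the semicontinuity of $\Theta$ that you invoke do not apply. The Reifenberg route you mention as an alternative is viable, but it too incorporates exactly this scale stratification (as in the treatment of $\cS^k_\epsilon$ in \cite{LiNab20}). So the degeneration you correctly identify as the crux must be built into the statement of the chart, i.e.\ you should prove bi-Lipschitzness on $\Gamma_x^{\,t}$-type sets and then union over $t$; once that restriction is made, your argument lines up with the paper's.
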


\begin{proof} 
    By Lemma \ref{l:S-Theta-dim} and (\ref{l:int-ext-theta.e2}), it suffices to show that
    $$U_\epsilon=\cS^{M}_\epsilon(\Theta,\partial W)\setminus (\cS^{k-3}(M, W)\cup \mathcal D)$$ 
    is countably $(k-2)$-rectifiable for every $\epsilon>0$ small. 
   
   Let $x\in U_\epsilon$ and $r_x\in(0,1]$ be the radii selected in Lemma \ref{l:etrm-cone}. Define
   $$\Gamma_x^{\,t}=B_{t/2}(x)\cap \{y\in U_\epsilon\colon r_y>2t\}.$$ 
   By definition, tangent cone $T_x(M\cap W)=\dR^{k-2}\times C([0,\beta'])$, where $0<\beta'\le\pi-\epsilon$.  
  Because $x\in M\cap \partial W$ is a boundary point, we have that $T_x(M\cap\partial W)$ is $(k-2)$-splitting.

  Construct a splitting map $u\colon M\cap W\to \dR^{k-2}$ in the same way as in \cite{LiNab20}, where the splitting directions $\dR^{k-2}$ are all in $T_x(M\cap \partial W)$. By Lemma \ref{l:etrm-cone}, within good scale $r_x$ and using the geometry of extremal subsets, the same splitting theory in \cite{LiNab20} applies. In particular, for any $\delta>0$, there exists $r_x>0$ small so that if $y\in B_{r_x}(x)\cap U_\epsilon$ is away from the $\dR^{k-2}$-splitting direction, then $T_y(M\cap W)=\dR^{k-2}\times C([0,\alpha'])$ with $\alpha'\ge\pi-\delta$. Thus $y\notin \cS^{k-2}_\delta(M, W)$. 
  
  Using this property and the same argument as in \cite{LiNab20}, we have that $u\big|_{\Gamma_x^{\,t}}$ is bi-Lipschitz onto from $\Gamma_x^{\,t}$ to its image in $\dR^{k-2}$. Therefore, for any fixed $t>0$, 
  $$\{y\in U_\epsilon\colon r_y>2t\}=\cup_{x\in B_1(p)}\Gamma_x^{\,t}$$
  is countably $(k-2)$-rectifiable. Thus
  $$U_\epsilon=\cup_{t>0}\{y\in U_\epsilon\colon r_y>2t\}$$
  is countably $(k-2)$-rectifiable.
\end{proof}

The above lemma shows that (\ref{s:mono.e9}) holds for $E=\cS^M(\Theta, \partial W)\cap M$. 

\begin{corollary}\label{c:min-cont-Theta}
\begin{align}
	\liminf_{r\to 0^+}\frac{\cH^{k-1}\left(\cup_{x\in \cS^M(\Theta, \partial W)\cap M}B_{r\cdot \Theta^M(x,\partial W)}(x,M\cap\partial W)\right)}{\omega(1)r}
	\ge \int_{\cS^M(\Theta, \partial W)\cap M}\Theta^M(x,\partial W)\operatorname d\cH^{k-2}.
	\label{c:min-cont-Theta.e1}
\end{align}
Since $\Theta^M(x,\partial W)=0$ if $x\notin \cS^M(\Theta,\partial W)$, we rewrite the above formula as
\begin{align}
	\liminf_{r\to 0^+}\frac{\cH^{k-1}\left(\cup_{x\in M\cap\partial W}B_{r\cdot \Theta^M(x,\partial W)}(x,M\cap\partial W)\right)}{\omega(1)r}
	\ge \int_{M\cap\partial W}\Theta^M(x,\partial W)\operatorname d\cH^{k-2}.
	\label{c:min-cont-Theta.e2}
\end{align}
\end{corollary}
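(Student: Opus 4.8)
The approach is to recognize (\ref{c:min-cont-Theta.e1}) as an immediate instance of the weighted generalized Minkowski content inequality (\ref{s:mono.e9}). Concretely, I would take the ambient space in \S\ref{s:r.d.s} to be $M\cap\partial W$, which has dimension $k-1$ and (being a closed subset of a finite-dimensional Alexandrov space) embeds isometrically into some $\dR^{N}$, so that the Federer-type results of \S\ref{s:r.d.s} apply; set $m=k-2$, so that $n-m=1$ there; let $E=\cS^M(\Theta,\partial W)\cap M$; and use the weight $u(x)=\Theta^M(x,\partial W)$, which is nonnegative, bounded by $\pi$, and (by convention) equal to $0$ off $\cS^M(\Theta,\partial W)$. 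Note that because $n-m=1$, the power $u^{\,n-m}$ that appears in (\ref{s:mono.e8})--(\ref{s:mono.e9}) is just $u$ itself, which is exactly the integrand on the right of (\ref{c:min-cont-Theta.e1}).

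The only hypothesis of (\ref{s:mono.e9}) that has to be checked is that $E=\cS^M(\Theta,\partial W)\cap M$ is $\cH^{k-2}$-rectifiable. Lemma \ref{l:S-Theta-rect} already gives this for $E\setminus\mathcal D$, so it remains to verify $\cH^{k-2}(E\cap\mathcal D)=0$. I would do this from the fibered structure of $\mathcal D$: by the construction in the proof of Lemma \ref{l:frame.sing.direct}, $\mathcal D$ is assembled from slices, each of which lies in a single level hypersurface $\partial W_t$ or $\partial W^i_s$; hence intersecting with the fixed corner $M\cap\partial W=\bigcap_i\partial W^i_{s_i}\cap\partial W$ retains only finitely many of these slices, and by the almost-orthogonality of an $\epsilon_0$-frame --- through the same splitting induction used in the proof of Lemma \ref{l:frame.sing.direct} --- each retained slice is forced into $\cS^{k-3}(M,W)$, which by Lemma \ref{l:S-Theta-dim} has Hausdorff dimension $\le k-3<k-2$. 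Thus $E=(E\setminus\mathcal D)\cup(E\cap\mathcal D)$ is $\cH^{k-2}$-rectifiable, and (\ref{s:mono.e9}) applies verbatim:
\begin{align}
\liminf_{r\to 0^+}\frac{\cH^{k-1}\big(\cup_{x\in E}B_{u(x)r}(x,M\cap\partial W)\big)}{\omega(1)\,r}
={\mathscr M}_{*u}^{k-2}(E)\ge\int_E u(x)\,\operatorname d\cH^{k-2},
\end{align}
which is precisely (\ref{c:min-cont-Theta.e1}).

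Finally, the rewritten form (\ref{c:min-cont-Theta.e2}) costs nothing: for $x\in(M\cap\partial W)\setminus\cS^M(\Theta,\partial W)$ one has $\Theta^M(x,\partial W)=0$, so the associated ball has radius $0$ and the integrand vanishes there, which means neither the union on the left nor the integral on the right changes when the index/integration set is enlarged from $\cS^M(\Theta,\partial W)\cap M$ to $M\cap\partial W$. The only genuinely substantive point is therefore the rectifiability input of the second paragraph --- upgrading the plain dimension bound $\dim_\cH\le k-2$ to $\cH^{k-2}$-rectifiability of $\cS^M(\Theta,\partial W)\cap M$ --- but this has been prepared by Lemmas \ref{l:frame.sing.direct}--\ref{l:S-Theta-rect}, so the residual work is just the dimension count showing that $\mathcal D$ meets $M\cap\partial W$ in an $\cH^{k-2}$-null set; everything else is bookkeeping with the definitions of \S\ref{s:r.d.s}.
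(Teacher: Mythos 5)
Your approach coincides with the paper's: the paper's entire proof of Corollary~\ref{c:min-cont-Theta} is the single sentence "The above lemma shows that (\ref{s:mono.e9}) holds for $E=\cS^M(\Theta,\partial W)\cap M$," which is exactly your first paragraph applied with $u=\Theta^M(\cdot,\partial W)$ and $n-m=1$. You add one genuine piece of work that the paper silently skips: Lemma~\ref{l:S-Theta-rect} only delivers $\cH^{k-2}$-rectifiability of $E\setminus\mathcal D$, while (\ref{s:mono.e9}) as invoked requires rectifiability of $E$ itself (the Gross-measure equality $\mathscr G_f^m=\cH_f^m$ is what needs the rectifiability hypothesis). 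You correctly flag this and propose closing it by showing $\cH^{k-2}(E\cap\mathcal D)=0$ via the slicing structure of $\mathcal D$ and the splitting induction from Lemma~\ref{l:frame.sing.direct}. That reduction is plausible but not fully airtight as written: the claim that each slice of $\mathcal D$ meeting the corner $M\cap\partial W$ is "forced into $\cS^{k-3}(M,W)$" needs the implication that $T_x(W_t)$ failing to be $(n-2)$-splitting forces $T_x(M\cap W_t)$ to fail to be $(k-2)$-splitting, which uses the almost-orthogonality of the frame in the same transversality spirit as Lemma~\ref{l:frame.sing.direct} but is not literally one of its conclusions. Worth noting, however, that the full strength you are after is not actually needed downstream: Corollary~\ref{c:mono.corner} already restricts the integral to $M\cap\partial W_t\setminus\mathcal D$, so even without your $\mathcal D$-nullity step the chain
\begin{align*}
\mathscr M_{*u}^{k-2}(E)\ge\mathscr G_u^{k-2}(E)\ge\mathscr G_u^{k-2}(E\setminus\mathcal D)=\cH_u^{k-2}(E\setminus\mathcal D)=\int_{E\setminus\mathcal D}u\,\operatorname d\cH^{k-2}
\end{align*}
suffices for the rest of the paper; your extra argument tightens the corollary to the form in which the paper states it.
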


\subsection{Monotonic formula} Let the assumptions be the same as in Subsection \ref{s:r.d.s}. Let $\mathbf P\colon \partial W_{t+h}\to \partial W_{t}$ be the closest-point projection map. That is, $y=\mathbf P(x)$ is a point on $\partial W_{t}$ so that $d(y,x)=h$. If $\lambda<0$, then 
\begin{align}
	d(x_1, x_2)\le e^{\lambda h}d(\mathbf P(x_1), \mathbf P(x_2))\le d(\mathbf P(x_1), \mathbf P(x_2)).
	\label{subs:m.formula.e1}
\end{align}
This implies that $\cH^{n-1}(\partial W_{t})-\cH^{n-1}(\partial W_{t+h})\ge 0$. We will show that the difference is related to the total singularity angle $\Theta(x,\partial W_t)$ on $\partial W_t$. For instance, we have
	\begin{align}
	-\frac{\operatorname d\cH^{n-1}(\partial W_t)}{\operatorname d t}
	\ge c\cdot\int_{\partial W_t}\Theta(x,\partial W_t)\,\operatorname d\cH^{n-2}.
\end{align}
 We establish such a formula on corner spaces.

\begin{lemma}[Monotonic formula on corner spaces]\label{l:mono.corner}
    Let $2\le k\le n$. In the distribution sense, we have 
	\begin{align}
		-\frac{\operatorname d\cH^{k-1}(M\cap\partial W_t)}{\operatorname d t}
		\ge c\cdot\int_{M\cap\partial W_t}\Theta^M(x,\partial W_t)\,\operatorname d\cH^{k-2}.
		\label{l:mono.corner.e1}
	\end{align}
\end{lemma}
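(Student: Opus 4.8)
The plan is to write $g(t)=\cH^{k-1}(M\cap\partial W_t)$ and to compare $g(t)$ with $g(t+h)$ through the closest-point projection $\mathbf P\colon M\cap\partial W_{t+h}\to M\cap\partial W_t$. By \eqref{subs:m.formula.e1} this map is injective and expands distances (by at least $e^{|\lambda|h}\ge 1$), hence $\cH^{k-1}(\mathbf P(M\cap\partial W_{t+h}))\ge g(t+h)$; since the image lies inside $M\cap\partial W_t$,
\[
g(t)-g(t+h)\ \ge\ \cH^{k-1}\bigl((M\cap\partial W_t)\setminus\mathbf P(M\cap\partial W_{t+h})\bigr).
\]
In particular $g$ is non-increasing, so the distributional derivative $-g'$ is a non-negative Radon measure; discarding its non-negative singular part, it suffices to prove the pointwise bound $-g'(t)\ge c\int_{M\cap\partial W_t}\Theta^M(x,\partial W_t)\,\operatorname d\cH^{k-2}$ for $\cH^1$-a.e.\ $t$. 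Since $-g'(t)=\lim_{h\to 0^+}h^{-1}(g(t)-g(t+h))$ at a.e.\ $t$, the display above reduces this to
\[
\liminf_{h\to 0^+}\frac{1}{h}\,\cH^{k-1}\bigl((M\cap\partial W_t)\setminus\mathbf P(M\cap\partial W_{t+h})\bigr)\ \ge\ c\int_{M\cap\partial W_t}\Theta^M(x,\partial W_t)\,\operatorname d\cH^{k-2}.
\]

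The geometric core is the claim that there is a universal $c_0>0$ so that for every $x$ in $\cS^M(\Theta,\partial W_t)$ outside the $\cH^{k-2}$-null exceptional set of Lemma~\ref{l:frame.sing.direct} there is $h_0(x)>0$ with
\[
B_{c_0 h\,\Theta^M(x,\partial W_t)}\bigl(x,\,M\cap\partial W_t\bigr)\ \cap\ \mathbf P(M\cap\partial W_{t+h})\ =\ \emptyset\qquad(0<h<h_0(x)).
\]
To prove it I would blow up at $x$. By Lemma~\ref{l:frame.sing.direct} the tangent cone is $T_x(M\cap W_t)=\dR^{k-2}\times C([0,\beta])$ with $\beta=\pi-\Theta^M(x,\partial W_t)$; the good-scale structure of $(W,p,T)$, Lemma~\ref{l:etrm-cone}, and the quantitative splitting/bad-scale theory of \cite{LiNab20} make this approximation effective at a definite scale $h_0(x)$ for the whole foliation $\{M\cap\partial W_s\}$ (the almost-orthogonality built into the frame controls $\nabla\bigl(d_{\partial W}|_{M\cap W}\bigr)$ on the ``slab'' part of the level sets up to the fixed error $O(\delta)$). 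In the model $\dR^{k-2}\times C([0,\beta])$ the relevant super-level sets are translated sub-cones, and a direct computation shows that a point at normal distance $h$ from the boundary has its closest boundary point at distance $\ge h\cot\frac\beta2=h\tan\frac{\Theta^M(x,\partial W_t)}{2}$ from the edge $\dR^{k-2}\times\{\mathrm{apex}\}$; hence the uncovered set is, near $x$, a metric ball of radius $h\cot\frac\beta2\ge\frac12 h\,\Theta^M(x,\partial W_t)$ about $x$. This gives the claim with $c_0=\tfrac12$, or with any smaller fixed $c_0$ once the $O(\delta)$ and blow-up errors are absorbed.

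Granting the claim, fix $j\in\dN$ and set $F_j=\{x\in\cS^M(\Theta,\partial W_t)\colon h_0(x)>1/j\}$; then $F_j\uparrow\cS^M(\Theta,\partial W_t)$ up to an $\cH^{k-2}$-null set and, by Lemma~\ref{l:S-Theta-rect}, each $F_j$ is $\cH^{k-2}$-rectifiable. For $0<h<1/j$ the claim yields $\bigcup_{x\in F_j}B_{c_0 h\,\Theta^M(x,\partial W_t)}(x,M\cap\partial W_t)\subseteq(M\cap\partial W_t)\setminus\mathbf P(M\cap\partial W_{t+h})$, so applying Corollary~\ref{c:min-cont-Theta} to $E=F_j$ with radius $r=c_0 h$ gives
\[
\liminf_{h\to 0^+}\frac{1}{h}\,\cH^{k-1}\bigl((M\cap\partial W_t)\setminus\mathbf P(M\cap\partial W_{t+h})\bigr)\ \ge\ c_0\,\omega(1)\int_{F_j}\Theta^M(x,\partial W_t)\,\operatorname d\cH^{k-2}.
\]
Letting $j\to\infty$ and using monotone convergence (with $\Theta^M=0$ off $\cS^M(\Theta,\partial W_t)$) yields the inequality needed in the first paragraph, with $c=c_0\,\omega(1)$, and hence \eqref{l:mono.corner.e1}; measurability of $t\mapsto\int_{M\cap\partial W_t}\Theta^M\,\operatorname d\cH^{k-2}$, needed for the distributional statement, follows from a coarea/Fubini argument over the rectifiable corner set.

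I expect the main obstacle to be the effective tangent-cone analysis of the second paragraph: the cone $\dR^{k-2}\times C([0,\beta])$ only describes $M\cap W_t$ in the blow-up limit, so one must use the controlled-bad-scale machinery (exactly as in the rectifiability proof of Lemma~\ref{l:S-Theta-rect}) to turn the ``focal shadow of a convex corner'' picture into a genuine scale-$h$ statement, with enough uniformity to run the exhaustion by the $F_j$; a secondary point is checking that the closest-point projection taken within the extremal subset $M\cap W$, rather than within $X$, still obeys \eqref{subs:m.formula.e1}, which is where the extremal-subset geometry of $M$ together with the frame condition is used.
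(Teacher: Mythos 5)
Your proposal follows essentially the same route as the paper: project $M\cap\partial W_{t+h}$ to $M\cap\partial W_t$ via nearest points, show via the cone structure $\dR^{k-2}\times C([0,\beta])$ that the image misses a ball of radius $\asymp h\tan\tfrac{\Theta^M}{2}\ge \tfrac12h\,\Theta^M$ about each singular $x$, and pass from the uncovered set to $\int\Theta^M\,\operatorname d\cH^{k-2}$ via the $f$-Minkowski lower bound (Corollary~\ref{c:min-cont-Theta}) applied to the rectifiable set of Lemma~\ref{l:S-Theta-rect}. The two points you flag are exactly where the paper's proof does its work: it replaces your reliance on \eqref{subs:m.formula.e1} for the \emph{corner-space} projection by the weaker estimate $\cH^{k-1}(M\cap\partial W_t)\le \cH^{k-1}(\mathbf P_t(M\cap\partial W_t))+o(t)$ obtained from rescaling and density, and it proves the ball-avoidance claim \eqref{l:mono.corner.e2} by a direct quasi-geodesic triangle comparison on the extremal subset $M\cap\partial W_0$ (inequalities \eqref{l:mono.corner.e3}--\eqref{l:mono.corner.e9}) rather than by a blow-up argument; your $F_j$ exhaustion is a reasonable way to make explicit the implicit non-uniformity of the scale in the paper's claim.
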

\begin{proof}
	Not losing generality, we assume $t=0$. By Lemma \ref{l:S-Theta-dim}, we only need to integrate over points $x\in M\cap\partial W_0$ for which $T_x(M\cap W_0)=\dR^{k-2}\times C([0,\beta])$ with $\beta>0$. 
	
    Consider $\mathbf P_t\colon M\cap\partial W_t\to M\cap\partial W_0, q\mapsto q'$, where $q'\in M\cap\partial W_0$ is a point so that $d(q, q')=d(q, M\cap\partial W_0)$.
We claim that
\begin{align}
	\mathbf P_t(M\cap\partial W_t)\subseteq M\cap\partial W_0\setminus\bigcup_{x\in M\cap\partial W_0}B_{ct\cdot \Theta^M(x,\partial W_0)}(x),
	\label{l:mono.corner.e2}
\end{align}
for $t>0$ sufficiently small. 
Let $x\in M\cap\partial W_0$ with $\Theta^M(x,\partial W_0)>0$ and $y\in M\cap\partial W_t$. Let $z\in M\cap\partial W_0$ so that $d_X(y, z)=d_X(y, M\cap\partial W_0)=t'\ge t$. The following estimate implies (\ref{l:mono.corner.e2}).
\begin{align}
	d_X(x,z)\ge\frac1{10}\tan\left(\frac1{2}\Theta^M(x,\partial W_0)\right)\cdot t'
	\label{l:mono.corner.e3}
\end{align} 
For simplicity, we prove it for $X\in\Alex^n(0)$. For $X\in\Alex^n(-1)$, same proof applies with constant $c$ adjusted accordingly.
 
By assumptions, the intersection angles of $\partial W^i$ and $\partial W^j$, $i\neq j$ and $\partial W$, are no more than $\frac\pi2$. Thus $M\cap \partial W_0$ is an extremal subset in $M\cap W_0$. Though $M\cap W_0$ is not known to be an Alexandrov space, the comparison theory still holds in its small neighborhood and for quasi-geodesics on $M\cap\partial W_0$. 

Suppose 
\begin{align}
	d_X(x,z)<\frac1{10}\tan\left(\frac1{2}\Theta^M(x,\partial W_0)\right)\cdot t'.
	\label{l:mono.corner.e4}
\end{align}
Because intrinsic geodesic connecting $x$ and $z$ in $M\cap\partial W_0$ is a quasi-geodesic, we have 
\begin{align}
	\lim_{t\to 0}\frac{d_{M\cap\partial W_0}(x,z)}{d_X(x,z)}=1
	\label{l:mono.corner.e5}
\end{align}
and
\begin{align}
	d^2_X(x,y)\le d^2_{M\cap\partial W_0}(x,z)+d_X(y,z)^2.
	\label{l:mono.corner.e6}
\end{align}
Therefore, for $t'>0$ sufficiently small, 
\begin{align}
	d^2_X(x,y)\le 2d^2_X(x,z)+(t')^2<(t')^2\sec^2\left(\frac1{2}\Theta^M(x,\partial W_0)\right).
	\label{l:mono.corner.e7}
\end{align}
Let $\eta\in\Sigma_x(M\cap\partial W_0)$ so that
\begin{align}
	d(\uparrow_x^y, \eta)=d(\uparrow_x^y, \Sigma_x(M\cap\partial W_0))\le\frac{\beta}{2}=\frac{\pi}{2}-\frac12\Theta^M(x,\partial W_0).
	\label{l:mono.corner.e8}
\end{align}
Let $\gamma(s)\subseteq M\cap\partial W_0$ be a quasi-geodesic with $\gamma'(0)=\eta$. Let $s_0=t'\cdot \tan\left(\frac1{2}\Theta^M(x,\partial W_0)\right)$. By (\ref{l:mono.corner.e7}) and (\ref{l:mono.corner.e8}), we have
\begin{align}
	d^2_X(y,\gamma(s_0))
	&\le s_0^2+d_X^2(x,y)-2s_0d_X(x,y)\cos(d(\uparrow_x^y, \eta))
	\notag\\
	&< (t')^2\cdot \tan^2\left(\frac1{2}\Theta^M(x,\partial W_0)\right)
	+(t')^2\sec^2\left(\frac1{2}\Theta^M(x,\partial W_0)\right)
	\notag\\
	&\qquad -2t'\cdot \tan\left(\frac1{2}\Theta^M(x,\partial W_0)\right)
	\cdot t'\sec\left(\frac1{2}\Theta^M(x,\partial W_0)\right)
	\cdot \sin\left(\frac1{2}\Theta^M(x,\partial W_0)\right)
	\notag\\
	&=(t')^2.
	\label{l:mono.corner.e9}
\end{align}
This contradicts to the assumption $d_X(y,M\cap\partial W_0)=t'$. Thus (\ref{l:mono.corner.e3}) and (\ref{l:mono.corner.e2}) hold. 


By standard rescalling and density arguments, we have that
\begin{align}
	\cH^{k-1}(M\cap\partial W_t)\le \cH^{k-1}(\mathbf P_t(M\cap\partial W_t))+o(t). 
\end{align}
Combing this with (\ref{l:mono.corner.e2}), we have
\begin{align}
	&\cH^{k-1}(M\cap\partial W_0)-\cH^{k-1}(M\cap\partial W_t)
	\notag \\ 
	& \ge \cH^{k-1}(M\cap\partial W_0\setminus \mathbf P_t(M\cap\partial W_t))-o(t)
	\notag \\
	&\ge \cH^{k-1}\left(\bigcup_{x\in M\cap\partial W_0}B_{ct\cdot\Theta^M(x,\partial W_0)}(x,M\cap\partial W_0)\right)-o(t).
	\label{l:mono.corner.e10}
\end{align}
  Divided by $t$ on the both sides of (\ref{l:mono.corner.e10}) and let $t\to 0^+$. The desired result follows by Corollary \ref{c:min-cont-Theta}.
\end{proof}

By Lemma \ref{l:int-ext-theta} and Lemma \ref{l:mono.corner}, we have the following corollary. 

\begin{corollary}\label{c:mono.corner}
	Let $2\le k\le n$. In the distribution sense, we have 
	\begin{align}
		-\frac{\operatorname d\cH^{k-1}(M\cap\partial W_t)}{\operatorname d t}
		\ge c\cdot\int_{M\cap\partial W_t\setminus\mathcal D}\Theta(x,\partial W_t)\,\operatorname d\cH^{k-2}, 
		\label{c:mono.corner.e1}
	\end{align}
where $\mathcal D$ is the co-dimension 2 subset defined in Lemma \ref{l:frame.sing.direct}.
\end{corollary}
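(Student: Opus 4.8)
The plan is simply to feed the intrinsic monotonic formula of Lemma~\ref{l:mono.corner} through the angle comparison of Lemma~\ref{l:int-ext-theta}. Lemma~\ref{l:mono.corner} already delivers, in the distributional sense,
\[
-\frac{\operatorname d\cH^{k-1}(M\cap\partial W_t)}{\operatorname d t}\;\ge\; c\int_{M\cap\partial W_t}\Theta^M(x,\partial W_t)\,\operatorname d\cH^{k-2},
\]
so what remains is to replace the intrinsic angle $\Theta^M$ by the ambient angle $\Theta$, at the cost of deleting from the domain of integration the co-dimension $2$ set $\mathcal D$ furnished by Lemma~\ref{l:frame.sing.direct}. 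Since $\Theta^M\ge 0$ pointwise, passing from $M\cap\partial W_t$ to $M\cap\partial W_t\setminus\mathcal D$ only makes the integral smaller, so the whole matter reduces to the pointwise inequality $\Theta^M(x,\partial W_t)\ge\Theta(x,\partial W_t)$ for every $x\in M\cap\partial W_t\setminus\mathcal D$.

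To prove that pointwise inequality I would distinguish two cases. If $\Theta(x,\partial W_t)=0$ there is nothing to check, since $\Theta^M\ge 0$. If $\Theta(x,\partial W_t)>0$, i.e.\ $x\in\cS(\Theta,\partial W_t)\cap M\setminus\mathcal D$, then Lemma~\ref{l:frame.sing.direct}(ii) leaves exactly two possibilities: either $\Theta(x,\partial W^i_{s_i})=0$ for every $i$, or $T_x(M\cap W_t)=\dR^{k-1}\times[0,\infty)$. The second possibility is impossible here: it would give $T_x(M\cap W_t)=\dR^{k-2}\times C([0,\pi])$, hence $\Theta^M(x,\partial W_t)=0$ and $x\notin\cS^M(\Theta,\partial W_t)$, which contradicts the identity $\cS^M(\Theta,\partial W_t)\setminus\mathcal D=\cS(\Theta,\partial W_t)\cap M\setminus\mathcal D$ of Lemma~\ref{l:int-ext-theta} (equation~(\ref{l:int-ext-theta.e2})). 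Hence the first possibility holds, and then the left-hand inequality in~(\ref{l:int-ext-theta.e1}) is precisely $\Theta^M(x,\partial W_t)\ge\Theta(x,\partial W_t)$. Integrating this inequality over $M\cap\partial W_t\setminus\mathcal D$ and chaining with the display above yields~(\ref{c:mono.corner.e1}).

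The substantive work is already contained in Lemmas~\ref{l:mono.corner},~\ref{l:frame.sing.direct} and~\ref{l:int-ext-theta}, so the only point requiring care at this stage is the bookkeeping of the exceptional sets: one must check that all of the lower-dimensional bad sets produced in Lemmas~\ref{l:S-Theta-dim},~\ref{l:frame.sing.direct} and~\ref{l:int-ext-theta} (the relevant singular strata, the sets $\mathcal D_s^i$, etc.) can be collected into the single co-dimension $2$ set $\mathcal D$, so that the comparison $\Theta^M\ge\Theta$ is genuinely valid on $M\cap\partial W_t\setminus\mathcal D$ and the deleted piece contributes nothing. Since $\Theta$ and $\Theta^M$ differ on their common support only by the multiplicative factor $1+o(\epsilon_0)$, the constant in~(\ref{c:mono.corner.e1}) is the one from Lemma~\ref{l:mono.corner} up to this harmless factor, which is why the statement is phrased with a generic $c$.
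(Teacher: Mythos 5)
Your proposal is correct and follows exactly the paper's intended route: the paper's own ``proof'' is simply the one-line remark that the corollary follows from Lemma~\ref{l:int-ext-theta} together with Lemma~\ref{l:mono.corner}, and your argument is a careful and accurate unpacking of that combination --- start from the $\Theta^M$-monotonicity of Lemma~\ref{l:mono.corner}, shrink the domain to $M\cap\partial W_t\setminus\mathcal D$, then use Lemma~\ref{l:frame.sing.direct}(ii) together with~(\ref{l:int-ext-theta.e2}) to rule out the degenerate splitting case and invoke the left-hand inequality of~(\ref{l:int-ext-theta.e1}) to get $\Theta^M(x,\partial W_t)\ge\Theta(x,\partial W_t)$ on the reduced domain.
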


\section{Stratified Integration}\label{s:sum.sing}



For a sequence of $\epsilon$-frame $\{W^1, W^2, \dots, W^n\}$ and $x\in\cap_{1\le i\le m} \partial W^i$, we let 
\begin{align}
	\Theta^{\{\partial W^1, \partial W^2, \dots, \partial W^m\}}(x)=\sum_{i=m+1}^n\Theta(x, \partial W^i).
	\label{s:sum.sing.e1}
\end{align}
In particular, we have inductive formula
\begin{align}
	\Theta^{\{\partial W^1, \partial W^2, \dots, \partial W^m\}}(x)=\Theta^{\{\partial W^1, \partial W^2, \dots, \partial W^{m+1}\}}(x)+\Theta(x, \partial W^{m+1}).
	\label{s:sum.sing.e2}
\end{align} 
We will not consider $\Theta^{\{\partial W^1, \partial W^2, \dots, \partial W^{n-1}\}}(x)$ because we need $\dim(\cap_{1=1}^m \partial W^{i})\ge 2$ in the monotonic formula Corollary \ref{c:mono.corner}. Thus we always require $m\ge n-3$ in (\ref{s:sum.sing.e2}) and set
\begin{align}
	\Theta^{\{\partial W^1, \partial W^2, \dots, \partial W^{n-2}\}}(x)=\Theta(x, \partial W^{n-1})+\Theta(x, \partial W^{n})
	\label{s:sum.sing.e3}
\end{align} 
as the initial case. 

Let $\phi_t(x)$ be the re-parameterized gradient flow (or called level set flow) of $d_{\partial W}|W$ so that $\phi_t(x)\in\partial W_t$. It's clear that 
\begin{align}
	\cK(x, \partial W)=\cL_{\nabla \phi_t}\left(\Theta(x,\partial W_t)\right)\,\Big|_{t=0}
\end{align}
in distribution sense, where $\cL$ is the Lie derivative.



\begin{remark}\label{r:flow.def}
	Let us point out that in the definition of $\cK(x)$, if we use the gradient flow of $d_{\partial W}$ instead of the re-parameterized one $\phi_t$, then the corresponding $\tilde\cK=|\nabla d_{\partial W}|^2\cdot\cK$, because $\nabla\phi_t=\nabla d_{\partial W}/|\nabla d_{\partial W}|^2$. Note that in our construction $|\nabla d_{\partial W}|\le 2$. If Theorem A holds for $\cK$, then it also holds for $\tilde\cK$. In our proof, we prefer the level set flow rather than the gradient flow, because it preserves the concavity of the level set. 
\end{remark}

Similarly,  define 
\begin{align}
	\cK^{\{\partial W^1, \partial W^2, \dots, \partial W^m\}}(x)=\sum_{i=m+1}^n\cK(x, \partial W^i).
	\label{s:sum.sing.e5}
\end{align}
Then we have inductive formula
\begin{align}
	\cK^{\{\partial W^1, \partial W^2, \dots, \partial W^m\}}(x)=\cK^{\{\partial W^1, \partial W^2, \dots, \partial W^{m+1}\}}(x)+\cK(x, \partial W^{m+1})
	\label{s:sum.sing.e6}
\end{align} 
for $m\ge n-3$ and the initial case
\begin{align}
	\cK^{\{\partial W^1, \partial W^2, \dots, \partial W^{n-2}\}}(x)=\cK(x, \partial W^{n-1})+\cK(x, \partial W^{n}).
	\label{s:sum.sing.e7}
\end{align} 
Recall that
\begin{align}
	\cK(x)=\inf_{\{W^1, W^2, \dots, W^n\}}\cK^{\{\varnothing\}}(x)=\inf_{\{W^1, W^2, \dots, W^n\}}\sum_{i=1}^n\cK(x, \partial W^i)
	\label{s:sum.sing.e8}
\end{align}
where the infinitum is taken over all family of frames $\{W^1, W^2, \dots, W^n\}$.

\begin{lemma}[Integral formula for $\cK(x)$ on corner spaces]\label{l:int.formula.K} Let 
	$$\{(W^1, p_1, T_1), (W^2, p_2, T_2), \dots,(W^m, p_m, T_m), (W, p, T)\}$$ 
	be an $\epsilon$-frame and $\displaystyle M=\cap_{i=1}^m\partial W^i$ be an $\epsilon$-corner space with $\dim(M)=k=n-m\ge 2$ and $\diam(M)\le 1$. Then
	\begin{align}
		&\int_0^T\int_{M\cap\partial W_t}\cK(x,\partial W_t)\operatorname d\cH^{k-2}_t dt 
		\le  c(n)\cdot \diam(M\cap W)^{k-2},
		\label{l:int.formula.K.e1}
	\end{align}
Since $||\nabla_xd_{\partial W_t}|-1|<\epsilon$, by co-area formula, we can rewrite it in the following form, with an abuse of notation
	\begin{align}
	&\int_{M\cap W}\cK(x,\partial W_x)\operatorname d\cH^{k-1} 
	\le  c(n)\cdot \diam(M\cap W)^{k-2}.
	\label{l:int.formula.K.e2}
\end{align}
\end{lemma}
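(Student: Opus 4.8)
The plan is to compute the left side of (\ref{l:int.formula.K.e1}) by disintegrating the ``$C^2$-curvature'' measure $\cK\operatorname d\cH^{k-1}$ on $M\cap W$ along the foliation $\{M\cap\partial W_t\}_{0\le t<T}$ produced by the level-set flow $\phi_t$ of $d_{\partial W}|_W$. The slice at time $t$ is precisely the measure written $\cK(x,\partial W_t)\operatorname d\cH^{k-2}_t$: by the definition of $\cK$ through $\phi_t$ the density already absorbs the co-area Jacobian of $d_{\partial W}$ along the singular set, as one sees explicitly in Example~\ref{e:2disk}, where the slice density is $\tfrac{2}{r\sin\delta(t)}=\tfrac{2}{r}\cdot|\nabla^{Z}d_{\partial W}|^{-1}$ along $Z=\cKS(X)$. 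Thus (\ref{l:int.formula.K.e1}) and its reformulation (\ref{l:int.formula.K.e2}) both amount to bounding the total mass of $\cK\operatorname d\cH^{k-1}$ over $M\cap W$ by $c(n)\diam(M\cap W)^{k-2}$, and the natural engine is the monotonic formula Corollary~\ref{c:mono.corner}, which controls the total singularity angle on each level set by the decay rate of $\cH^{k-1}(M\cap\partial W_t)$; one reads the slice mass at time $t$ as the instantaneous rate of \emph{creation} of $\Theta$ as the foliation sweeps past level $t$.

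The first main step is the flow-line identity that generalizes the computation in Example~\ref{e:2disk}. Along a trajectory $t\mapsto\phi_t(y)$ the number $\Theta(\phi_t(y),\partial W_t)$ changes only through the creation recorded by $\cK$; regarding $\cK(x,\partial W_t)$ in the distribution sense as the Lie derivative $\cL_{\nabla\phi_t}\big(\Theta(x,\partial W_t)\big)$ (the identity recorded just before Remark~\ref{r:flow.def}), the fundamental theorem of calculus telescopes and the intermediate values cancel:
\begin{align}
	\int_{0}^{T}\cK\big(\phi_t(y),\partial W_t\big)\,\operatorname d t
	=\Theta\big(\phi_T(y),\partial W_T\big)-\Theta\big(\phi_0(y),\partial W_0\big)
	\le\pi .
	\label{l:int.formula.K.e4}
\end{align}
Integrating (\ref{l:int.formula.K.e4}) in the transverse $(k-2)$-directions, the left side of (\ref{l:int.formula.K.e1}) collapses, modulo the ``leakage'' coming from the contraction of the singular set under the flow, to the difference of total singularity angles on the two extreme level sets,
\begin{align}
	\int_0^T\int_{M\cap\partial W_t}\cK(x,\partial W_t)\,\operatorname d\cH^{k-2}_t\operatorname d t
	\ \le\ \int_{M\cap\partial W_T}\Theta(x,\partial W_T)\,\operatorname d\cH^{k-2}_T\ +\ (\text{leakage}).
\end{align}
To make this rigorous I would work on the $\cH^{k-2}$-rectifiable pieces furnished by Lemma~\ref{l:S-Theta-rect}, use that the level-set flow is distance non-increasing when $\lambda<0$ so that push-forwards of $\cH^{k-2}$ along $\phi_t$ are controlled (cf. (\ref{subs:m.formula.e1})), and feed in Corollary~\ref{c:mono.corner}, whose Minkowski-content proof already supplies exactly the defect estimate needed both to bound the leakage and to bound the boundary term by the decay of $\cH^{k-1}(M\cap\partial W_t)$.

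To close the estimate I would bound the two remaining contributions by $c(n)\diam(M\cap W)^{k-2}$. The boundary term $\int_{M\cap\partial W_T}\Theta(x,\partial W_T)\operatorname d\cH^{k-2}_T$ is the total codimension-one singularity of the innermost convex level set, which is a hypersurface of diameter at most $\diam(M\cap W)$ bounding the convex set $M\cap W_t$ ($t$ close to $T$); for boundaries of convex subsets of Alexandrov spaces the defects $\Theta$ behave like edge curvatures of a convex body and their total is bounded by an integral-geometric (Cauchy / Gauss--Bonnet type) estimate by $c(n)\diam^{k-2}$, using the extrinsic-curvature lower bound $\ge|\lambda|$ of the level sets from Lemma~\ref{l:cox-ann}. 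The leakage term is an integral over $t$ of $\Theta$ against the rate of decrease of the $\cH^{k-2}$-measure of the sweeping singular set; here I would again use Corollary~\ref{c:mono.corner} together with the $(k-1)$-volume bound of a single convex level set, and Corollary~\ref{c:min-cont-Theta} to pass between Minkowski content and $\int\Theta\operatorname d\cH^{k-2}$, so that the total leakage telescopes back into $h(0)-h(T)$ divided by the scale, which is again $\lesssim_n\diam(M\cap W)^{k-2}$.

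The hard part is steps two and three together. First, one must upgrade the formal telescoping in (\ref{l:int.formula.K.e4}) to an honest inequality even though $\cK$ is only a $\limsup$ of difference quotients and $t\mapsto\Theta(\phi_t(y),\partial W_t)$ is merely upper semicontinuous from the right; this forces one to exploit the ``very convex'' property of the level sets $\partial W_t$, which is preserved by the level-set flow (Remark~\ref{r:flow.def}, Lemma~\ref{l:cox-ann}), in order to control the total variation of $\Theta$ along trajectories and the sign of the leakage. Second, and more seriously, the final constant must be \emph{purely dimensional}: since a general $(k-2)$-stratum can carry infinite $\cH^{k-2}$-measure (\cite{OtsShi94}), no crude rectifiability or covering bound is available for $\int_{M\cap\partial W_T}\Theta\operatorname d\cH^{k-2}_T$, and one must genuinely use the convexity of $M\cap W_t$ and the monotonic formula, which is exactly why Corollary~\ref{c:mono.corner} is stated on corner spaces and why the innermost level set has been arranged to sit at a controlled scale.
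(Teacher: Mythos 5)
Your high-level plan has the right ingredients (telescoping the $\cK$--$\Theta$ relation, invoking Corollary~\ref{c:mono.corner}, using the $(k-1)$-volume of level sets), but the logical structure diverges from the paper's in two places, and each divergence hides a real gap.

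First, the order of the telescoping. You want a pointwise flow-line identity $\int_0^T\cK(\phi_t(y),\partial W_t)\,\operatorname d t=\Theta(\phi_T(y),\partial W_T)-\Theta(\phi_0(y),\partial W_0)$ followed by a transverse Fubini, and you rightly worry about upper semicontinuity and ``leakage.'' But there is a sharper problem: $\cK(\phi_t(y),\partial W_t)$ is by definition nonzero only when $\Theta(\phi_t(y),\partial W_t)=0$, so along a flow line the set $\{t\colon \cK>0\}$ is typically a null set inside $\{t\colon \Theta=0\}$ (departure points of excursions of $\Theta$). The pointwise Lebesgue integral $\int_0^T\cK(\phi_t(y),\cdot)\,\operatorname d t$ can therefore vanish even when singularity is being created, and the claimed telescoping cannot be salvaged by a semicontinuity argument alone. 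The paper sidesteps this entirely by never working along single flow lines: it differentiates the level-set integral $\int_{M\cap\partial W_s}\Theta\,\operatorname d\cH^{k-2}_s$ via the Leibniz formula after first restricting to the flow-out of $M\cap\partial W_0\setminus\cS^M(\Theta,\partial W_0)$. On that domain $\Theta\equiv 0$, so the transport/Jacobian term $\int\Theta\,\cL_{\nabla\phi_s}(\operatorname d\cH^{k-2}_s)$ vanishes identically, which is exactly what kills your ``leakage.'' This is the key structural move: $\cK$ is used only in the distributional sense $\cL_{\nabla\phi_s}(\Theta)$ inside a level-set integral, not as a pointwise $\limsup$ along trajectories.

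Second, and more seriously, your final step asserts a direct Cauchy/Gauss--Bonnet type bound $\int_{M\cap\partial W_T}\Theta\,\operatorname d\cH^{k-2}\le c(n)\diam(M\cap W)^{k-2}$ ``for boundaries of convex subsets of Alexandrov spaces.'' That estimate is precisely the kind of statement this lemma is building toward, not an available input: as you yourself note, $\cH^{k-2}$ of the singular stratum can be infinite (\cite{OtsShi94}), so there is no cheap geometric bound on $\int\Theta\,\operatorname d\cH^{k-2}$ on an arbitrary convex level set. The paper never bounds this term directly at $t=T$. Instead it applies Corollary~\ref{c:mono.corner} at a well-chosen intermediate level $T'<T$: since $\cH^{k-1}(M\cap\partial W_t)$ is monotone and bounded by $c(n)\diam(M\cap W)^{k-1}$ while the parameter range has length comparable to $\diam(M\cap W)$, a mean-value argument furnishes $T'$ arbitrarily close to $T$ where $-\tfrac{\operatorname d}{\operatorname d t}\cH^{k-1}(M\cap\partial W_t)\big|_{T'}\le c(n)\diam(M\cap W)^{k-2}$; one then lets $T'\to T$ using monotonicity of the left-hand side of (\ref{l:int.formula.K.e1}) in the upper limit of integration. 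Replacing your Gauss--Bonnet claim by this averaging step, and replacing the flow-line telescope by the level-set Leibniz computation (\ref{l:int.formula.K.e3}), would bring your argument in line with the paper's proof.
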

\begin{proof}
	Let $\cL_X(\omega)$ be the Lie derivative of the differential form $\omega$. Note that 
	\begin{align}
		\int_{M\cap\partial W_{s}}\Theta(x,\partial W_{s})\operatorname d\cH^{k-2}_{s}
		\ge
		\int_{\phi_s\left(M\cap\partial W_0\setminus \cS^M(\Theta,\partial W_0)\right)}\Theta(x,\partial W_{s})\operatorname d\cH^{k-2}_{s}.
	\end{align}
	By the definition of $\cK(x,\partial W)$ and the Leibniz integral formula, we have 
	\begin{align}
		&\frac{\operatorname d}{\operatorname d s}\int_{M\cap\partial W_s}\Theta(x,\partial W_s)\operatorname d\cH^{k-2}_s\,\Big|_{s=0}
		\notag\\
		&\qquad\ge\frac{\operatorname d}{\operatorname d s}\int_{\phi_s\left(M\cap\partial W_0\setminus \cS^M(\Theta,\partial W_0)\right)}\Theta(x,\partial W_s)\operatorname d\cH^{k-2}_s\,\Big|_{s=0}
		\notag \\
		&\qquad =\int_{M\cap\partial W_0\setminus \cS^M(\Theta,\partial W_0)}\cL_{\nabla d_{\phi_s}}\left(\Theta(x,\partial W_s)\operatorname d\cH^{k-2}_s\right)\,\Big|_{s=0}
		\notag \\
		&\qquad =\int_{M\cap\partial W_0\setminus \cS^M(\Theta,\partial W_0)}\cL_{\nabla \phi_s}\left(\Theta(x,\partial W_s)\right)\,\Big|_{s=0}\operatorname d\cH^{k-2}_0
		\notag \\
		&\hskip 1in+\int_{M\cap\partial W_0\setminus \cS^M(\Theta,\partial W_0)}\Theta(x,\partial W_0)\cL_{\nabla \phi_s}\left(\operatorname d\cH^{k-2}_s\right)\,\Big|_{s=0}
		\notag \\
		&\qquad = \int_{M\cap\partial W_0\setminus \cS^M(\Theta,\partial W_0)}\cK(x,\partial W_0)\operatorname d\cH^{k-2}_0+0
		\notag\\
		&\qquad =\int_{M\cap\partial W_0}\cK(x,\partial W_0)\operatorname d\cH^{k-2}_0.
		\label{l:int.formula.K.e3}
	\end{align}
Replacing $\partial W_0$ by $\partial W_t$ and integrating over $t$, we get
	\begin{align}
	&\int_0^{T'}\int_{M\cap\partial W_t}\cK(x,\partial W_t)\operatorname d\cH^{k-2}_t dt 
	\notag\\
	&\qquad \le \int_{M\cap\partial W_{T'}}\Theta(x,\partial W_{T'})\operatorname d\cH^{k-2}_{T'}
	-\int_{M\cap\partial W_0}\Theta(x,\partial W_0)\operatorname d\cH^{k-2}_0
	\notag\\
	&\qquad \le \int_{M\cap\partial W_{T'}}\Theta(x,\partial W_{T'})\operatorname d\cH^{k-2}_{T'}.
	\label{l:int.formula.K.e4}
\end{align}
For the above inequality, one can find a simple version in Example \ref{e:2disk}. 
Applying the monotonic formula Corollary \ref{c:mono.corner}, we get 
	\begin{align}
	&\int_0^{T'}\int_{M\cap\partial W_t}\cK(x,\partial W_t)\operatorname d\cH^{k-2}_t dt 
	\le -\frac{\operatorname d\cH^{k-1}(M\cap\partial W_t)}{\operatorname d t}\Big|_{t=T'}.
   \end{align}
Choose $T'<T$ and arbitrarily close to $T$ so that 
	\begin{align}
	-\frac{\operatorname d\cH^{k-1}(M\cap\partial W_t)}{\operatorname d t}\Big|_{t=T'}
	&\le c(n)\cdot \diam(M\cap\partial W_{T'})^{k-2}
	\notag \\
	&\le c(n)\cdot \diam(M\cap W)^{k-2}.
\end{align}
Now we have 
\begin{align}
	&\int_0^{T'}\int_{M\cap\partial W_t}\cK(x,\partial W_t)\operatorname d\cH^{k-2}_t dt 
	\le c(n)\cdot \diam(M\cap W)^{k-2}.
\end{align}
Because $k\ge 2$, the desired result follows by letting $T'\to T$.

\end{proof}

Theorem A is the special case of $m=0$ in the following theorem, by choosing $\epsilon=c(n)>0$ small. 

 \begin{theorem}\label{l:int.corner} Let  $\{W^1, W^2, \dots, W^m\}$
 	be an $\epsilon$-frame and $\displaystyle M=\cap_{i=1}^m\partial W^i$ be an $\epsilon$-corner space with $k=\dim(M)=n-m$ and $\diam(M)\le 1$. Then
	\begin{align}
		\int_{M}\cK^{\{\partial W^1, \partial W^2, \dots, \partial W^m\}}(x) \operatorname d\cH^{k-1}\le c(n,\nu,\epsilon)\cdot \diam(M)^{k-2}.
		\label{l:int.corner.e0}
	\end{align}
\end{theorem}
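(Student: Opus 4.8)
The plan is to induct on the codimension $m$ of the corner space $M$, with the base case being the deepest stratum we allow, namely $m = n-2$ (so $\dim M = 2$), and with the inductive step descending one codimension at a time via the stratified covering of Lemma \ref{t:cov-angle}. The essential mechanism is that $\cK^{\{\partial W^1,\dots,\partial W^m\}}(x)$ decomposes, by the inductive formula \eqref{s:sum.sing.e6}, as $\cK^{\{\partial W^1,\dots,\partial W^{m+1}\}}(x) + \cK(x,\partial W^{m+1})$, and each of the two pieces is handled by a different tool: the first piece is controlled by the inductive hypothesis applied on the next-stratum corner space $M \cap \partial W^{m+1}$, while the second piece $\cK(x,\partial W^{m+1})$, integrated over $M$, is exactly what Lemma \ref{l:int.formula.K} bounds, once we recognize $M$ as being foliated by the level sets $M \cap \partial W^{m+1}_t$ of the convex annulus $W^{m+1}$.

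\textbf{Base case $m = n-2$.} Here $\dim(M) = 2$ and $\cK^{\{\partial W^1,\dots,\partial W^{n-2}\}}(x) = \cK(x,\partial W^{n-1}) + \cK(x,\partial W^n)$ by \eqref{s:sum.sing.e7}. Cover $M \cap B_1$ by $(\epsilon,\tfrac12\lambda)$-convex annuli $\{W^{n-1,\ell}\}_{\ell=1}^{N}$ with $N \le c(n,\nu,\epsilon)$ using Lemma \ref{t:cov-angle}, so that each extended family is still an $\epsilon$-frame; then refine once more to introduce $W^{n,\ell'}$. On each piece $M \cap W^{n-1,\ell}$, apply Lemma \ref{l:int.formula.K} (with $k=2$, so the right-hand side is $c(n)\cdot\diam(M\cap W^{n-1,\ell})^0 = c(n)$) to bound $\int_{M\cap W^{n-1,\ell}} \cK(x,\partial W^{n-1,\ell}_x)\,\operatorname d\cH^1$, and similarly for the $W^{n,\ell'}$ covering for the second term. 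Summing over the $\le c(n,\nu,\epsilon)$ pieces yields \eqref{l:int.corner.e0} with exponent $k-2 = 0$. One must check that the $\cK(x,\partial W^i)$ appearing in the definition of $\cK^{\{\cdots\}}$ can be computed with respect to the specific $W^{n-1,\ell}$ supplied by the covering (rather than an a priori fixed $W^{n-1}$); this is fine because the left side of \eqref{l:int.corner.e0} is a sum over whatever frame extends the given $\{W^1,\dots,W^m\}$, and the covering lemma produces exactly such frames.

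\textbf{Inductive step.} Assume \eqref{l:int.corner.e0} holds on all corner spaces of codimension $m+1$. Given the codimension-$m$ corner space $M = \cap_{i=1}^m \partial W^i$, use Lemma \ref{t:cov-angle} to cover $M \cap B_1$ by $(\epsilon,\tfrac12\lambda)$-convex annuli $\{(W^{m+1,\ell}, p_\ell, T_\ell)\}_{\ell=1}^{N}$, $N \le c(n,\nu,\epsilon)$, each extending the frame. Fix one $\ell$ and write $W = W^{m+1,\ell}$. Its level sets $\{M \cap \partial W_t\}_{0 \le t < T_\ell}$ foliate $M \cap W$, and for each fixed $t$, the set $M' := M \cap \partial W_t$ is an $\epsilon$-corner space of codimension $m+1$. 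Split:
\begin{align}
\int_{M \cap W} \cK^{\{\partial W^1,\dots,\partial W^m\}}(x)\,\operatorname d\cH^{k-1}
&= \int_{M\cap W} \cK^{\{\partial W^1,\dots,\partial W^{m+1}\}}(x)\,\operatorname d\cH^{k-1} \notag\\
&\qquad + \int_{M\cap W} \cK(x,\partial W)\,\operatorname d\cH^{k-1}. \notag
\end{align}
The second integral is $\le c(n)\cdot\diam(M\cap W)^{k-2} \le c(n)\cdot\diam(M)^{k-2}$ directly by Lemma \ref{l:int.formula.K} (in the form \eqref{l:int.formula.K.e2}). For the first integral, write it as $\int_0^{T_\ell} \big(\int_{M\cap\partial W_t} \cK^{\{\partial W^1,\dots,\partial W^{m+1}\}}(x)\,\operatorname d\cH^{k-2}_t\big)\,dt$ by the coarea/foliation structure; each inner integral is over a codimension-$(m+1)$ corner space and is $\le c(n,\nu,\epsilon)\cdot\diam(M\cap\partial W_t)^{k-3}$ by the inductive hypothesis. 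Since $\diam(M\cap\partial W_t) \le \diam(M)$ and the $t$-interval has length $T_\ell \le 2\diam(M\cap W) \le c\cdot\diam(M)$, the $t$-integration costs one more factor of $\diam(M)$, giving $\le c(n,\nu,\epsilon)\cdot\diam(M)^{k-2}$. Summing over the $\le c(n,\nu,\epsilon)$ values of $\ell$ (and noting $\cup_\ell (M\cap W^{m+1,\ell}) \supseteq M \cap B_1$, with $\diam(M) \lesssim 1$) produces \eqref{l:int.corner.e0}, possibly after absorbing a harmless combinatorial overlap factor from the covering into $c(n,\nu,\epsilon)$.

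\textbf{Main obstacle.} The delicate point is the interchange of the $\operatorname d\cH^{k-1}$ integral over $M\cap W$ with the foliation into $t$-slices and the application of the inductive hypothesis slice-by-slice: one needs that $\cH^{k-1}\llcorner(M\cap W)$ genuinely disintegrates (up to bounded multiplicative constants coming from $|\nabla d_{\partial W}|$ lying in $[1-c\epsilon,\,2]$, cf. Remark \ref{r:flow.def}) as $\int_0^{T}\cH^{k-2}_t\,dt$ along the level set flow $\phi_t$, and that the inductive hypothesis is uniform over $t$ — in particular that the constant $c(n,\nu,\epsilon)$ does not degenerate as $t \to T_\ell$ where $\diam(M\cap\partial W_t)\to 0$. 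This is controlled because the bound is a \emph{scale-invariant} power of the diameter and the $\epsilon$-frame / $\epsilon_0$-convexity properties are preserved along the flow (the level set flow preserves concavity of $d_{\partial W_t}$, by the remark following Lemma \ref{l:cox-ann}), so the slice $M\cap\partial W_t$ is honestly a corner space with the same structural constants. A secondary bookkeeping issue is that the number of recursive coverings compounds $c(n,\nu,\epsilon)$ by a factor at each of the $\le n$ levels, but this only multiplies the final constant by $c(n,\nu,\epsilon)^n$, which is still of the required form.
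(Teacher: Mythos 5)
Your proposal is correct and takes essentially the same approach as the paper: induction on the codimension $m$ with base case $m = n-2$ (so $\dim M = 2$), using the stratified covering of Lemma \ref{t:cov-angle} together with the inductive decomposition \eqref{s:sum.sing.e6}, Lemma \ref{l:int.formula.K} for the $\cK(x,\partial W^{m+1})$ term, and a slice-by-slice application of the inductive hypothesis over the foliation $\{M\cap\partial W^{m+1}_t\}_t$ followed by integration in $t$. The only difference is presentational (you split the integrand first and handle each piece, the paper bounds each piece and then combines via \eqref{s:sum.sing.e6}); your remarks on disintegration of $\cH^{k-1}$ along the flow and scale-invariance of the diameter power are the right sanity checks, and they are consistent with how the paper implicitly uses them.
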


\begin{proof} We first prove for $k=2$. That is, $m=n-2$ and
	\begin{align}
		\int_{M}\cK(x, \partial W_x^{n-1})+\cK(x, \partial W_x^{n}) \operatorname d\cH^{1}\le c(n).
		\label{l:int.corner.e1}
	\end{align}

By Lemma \ref{t:cov-angle}, we cover $M$ by $\epsilon$-convex annuli $\{(W^{n-1,\ell_1},p^{n-1}_{\ell_1})\}_{\ell_1=1}^{c(n,\nu,\epsilon)}$ so that $p^{n-1}_{\ell_1}\in M$ and 
$$\{W^1, W^2, \dots, W^{n-2}, W^{n-1,\ell_1}\}$$ 
is an $\epsilon$-frame for each $\ell_1$. By Lemma \ref{l:int.formula.K}, we have 
	\begin{align}
	&\int_{M\cap W^{n-1,\ell_1}}\cK(x,\partial W_x^{n-1,\ell_1})\operatorname d\cH^1 
	\le  c(n).
	\label{l:int.corner.e2}
\end{align}
By Lemma \ref{t:cov-angle} again, for each $\ell_1$, cover $M\cap W^{n-1,\ell_1}$ by $\epsilon$-convex annuli $\{(W^{n,\ell_2},p^{n}_{\ell_2})\}_{\ell_2=1}^{c(n,\nu,\epsilon)}$ so that $p^{n}_{\ell_2}\in M\cap W^{n-1,\ell_1}$ and 
$$\{W^1, W^2, \dots, W^{n-2}, W^{n-1,\ell_1}, W^{n,\ell_2}\}$$ 
is an $\epsilon$-frame for every $\ell_2$. Apply Lemma \ref{l:int.formula.K} to $\{W^1, W^2, \dots, W^{n-2}, W^{n,\ell_2}\}$.
We get 
	\begin{align}
	&\int_{M\cap W^{n,\ell_2}}\cK(x,\partial W_x^{n,\ell_2})\operatorname d\cH^1 
	\le  c(n).
	\label{l:int.corner.e3}
\end{align}
Fix $\ell_1$ and sum up all corresponding $1\le\ell_2\le c(n,\nu,\epsilon)$. We get
	\begin{align}
	&\int_{M\cap W^{n-1,\ell_1}}\cK(x,\partial W_x^{n})\operatorname d\cH^1 
	\le  c(n,\nu,\epsilon).
	\label{l:int.corner.e4}
\end{align}
Inequality (\ref{l:int.corner.e1}) follows by summing up (\ref{l:int.corner.e2}) and (\ref{l:int.corner.e4}), and sum up for all $1\le\ell_1\le c(n,\nu,\epsilon)$. 

Now we prove (\ref{l:int.corner.e0}) by induction, which is similar as the case $k=2$. By Lemma \ref{t:cov-angle}, there exists an $\epsilon$-convex annuli covering $\{(W^{m+1,\ell},p_{\ell})\}_{\ell=1}^{c(n,\nu,\epsilon)}$ for $M$. Applying inductive hypothesis on 
$$\{W^1, W^2, \dots, W^{m}, W_t^{m+1,\ell}\},$$ 
we have
	\begin{align}
	&\int_{M\cap \partial W_t^{m+1,\ell}}\cK^{\{\partial W^1, \partial W^2, \dots, \partial W^{m},\partial W_t^{m+1,\ell}\}}(x)\operatorname d\cH^{k-2} 
	\le  c(n,\nu,\epsilon)\cdot \diam(M\cap\partial W_t^{m+1,\ell})^{k-3}.
	\label{l:int.corner.e5}
\end{align}
Integrating over $t$, we get 
	\begin{align}
	&\int_{M\cap W_x^{m+1,\ell}}\cK^{\{\partial W^1, \partial W^2, \dots, \partial W^{m},\partial W_x^{m+1,\ell}\}}(x)\operatorname d\cH^{k-1} 
	\le  c(n,\nu,\epsilon)\cdot \diam(M\cap\partial W^{m+1,\ell})^{k-2}.
	\label{l:int.corner.e6}
\end{align}
Applying Lemma \ref{l:int.formula.K} to $\{W^1, W^2, \dots, W^{m}, W^{m+1,\ell}\}$, we get 
	\begin{align}
	&\int_{M\cap W_x^{m+1,\ell}}\cK(x,\partial W_x^{m+1,\ell})\operatorname d\cH^{k-1} 
	\le  c(n)\cdot \diam(M\cap W^{m+1,\ell})^{k-2}.
	\label{l:int.corner.e7}
\end{align}
Summing up (\ref{l:int.corner.e6}), (\ref{l:int.corner.e7}) and by the inductive formula (\ref{s:sum.sing.e6}), we have
	\begin{align}
	&\int_{M\cap W_x^{m+1,\ell}}\cK^{\{\partial W^1, \partial W^2, \dots, \partial W^{m}\}}(x)\operatorname d\cH^{k-1} 
	\le  c(n,\nu,\epsilon)\cdot \diam(M\cap W^{m+1,\ell})^{k-2}.
	\label{l:int.corner.e8}
\end{align}
The desired result follows by summing up (\ref{l:int.corner.e8}) for all $1\le\ell\le c(n,\nu,\epsilon)$. 
\end{proof}

\begin{remark}
It follows by the same proof that
	\begin{align}
	\int_{M}\Theta^{\{\partial W^1, \partial W^2, \dots, \partial W^{m}\}}(x) \operatorname d\cH^{k-1}\le c(n,\nu,\epsilon)\cdot \diam(M)^{k-2}.
\end{align}
In particular, 
	\begin{align}
	\int_{B_r}\Theta(x) \operatorname d\cH^{n-1}\le c(n,\nu,\epsilon)\cdot r^{n-2}.
\end{align}
However, this is less interesting than the integral bound of $\cK(x)$, because $\Theta(x)$ doesn't reflect the singularity of $X$. Nevertheless, it shows that the distributional derivative (\ref{s:sum.sing.e4}) is well-defined. 
\end{remark}

At last, let us prove Theorem B. 
\begin{proof} For $n=2$, the continuity follows directly from Gauss-Bonnet formula. Assume $n\ge 3$. Note that for Alexandrov spaces, tangent cones are lower semi-continuous \cite{BGP}. That is, if $(X_i, x_i)\overset{d_{GH}}\longrightarrow (X,x)$,  $T_{x_i}(X_i)=C(\Sigma_{x_i})$ and $T_{x}(X)=C(\Sigma_{x})$, then for any Gromov-Hausdorff limit $\displaystyle\Sigma=\lim_{i\to\infty}\Sigma_{x_i}$, there is a distance non-increasing onto map from $\Sigma$ to $\Sigma_x$. This implies $\displaystyle\lim_{i\to\infty}\int_{B_1(p_i)} \theta_i\, \operatorname d\cH^{n-2}=0$. 
	
For the convergence of $\int_{B_r(x_i)} \cK_i\, \operatorname d\cH^{n-1}$, it suffices to show that for any $x_i\in X_i$ with $x_i\to x\in M$, there exists $r>0$ so that 
\begin{align}
	\lim_{i\to\infty}\int_{B_r(x_i)} \cK_i\, \operatorname d\cH^{n-1}=0.
	\label{t:K.cont.e1.0}
\end{align}
	
It's clear that
		\begin{align}
			\liminf_{i\to\infty}\int_{B_r(x_i)} \cK_i\, \operatorname d\cH^{n-1}
			\ge 0.
			\label{t:K.cont.e1}
		\end{align}
	For any $\delta>0$, there exists $N>0$ so that the following hold for every $i\ge N$. Every $\epsilon$-frame in $M$ can be lifted to a $\frac12(\epsilon-\delta)$-frame in $X_i$, with a small ball of radius $0<\delta\ll\epsilon$ removed around every center of the convex annuli. The choice of $\delta$ is to preserve the good scales in the lifted frames. Let $q^i_j$, $j=1,2,\dots,c(n,\nu,\epsilon)$ be the centers of the lifted frames. Then the lifting can be chosen so that the lifted frames exhaust $B_r(x_i)\setminus \cup_{j=1}^{c(n,\nu,\epsilon)}B_{\delta}(q^i_j)$, where $\delta\to 0$ as $i\to\infty$. Since $M$ is smooth, we can perturb the distance functions and choose $r>0$ small, so that all corresponding frames are smooth in $B_r(x_i)$. In particular, we have $\Theta(y,\partial W_t)=0$ for every convex annulus $W$ in the covering, and $y$ away from the centers of the convex annuli.

	It's easy to see that the lower semi-continuity property also holds for tangent cones on corner spaces. Therefore, $\Theta(x_i,\partial W_i)$ is upper semi-continuous, as $i\to\infty$. By inequality (\ref{l:int.formula.K.e4}) and the convergence of gradient flow, we have that 
	\begin{align}
		\limsup_{i\to\infty}\int_{B_r(x_i)\setminus \cup_{j=1}^{c(n,\nu,\epsilon)}B_{\delta}(q^i_j)} \cK_i\, \operatorname d\cH^{n-1}
		\le 0.
		\label{t:K.cont.e1}
	\end{align}
	By Theorem A, we have that 
	\begin{align}
		\int_{\cup_{j=1}^{c(n,\nu,\epsilon)}B_{\delta}(q^i_j)} \cK_i\, \operatorname d\cH^{n-1} \le c(n,\nu,\epsilon)\delta^{n-2}.
		\label{t:K.cont.e2}
	\end{align}
	The desired result follows by letting $\delta\to 0$. 
	
\end{proof}

\section{Appendix}

As an application of the covering technique in this paper, we give a direct proof for Theorem \ref{t:int.scal}, which is restated below. 

\begin{theorem}[Petrunin \cite{Pet09}]\label{t:Pet.X}
	For any $n$-dimensional manifolds $(X, g)$ with sectional curvature $\sec_X\ge -1$, we have a prior $L^1$-bound for the scalar curvature
	$\dsp\int_{B_1}scal \,\operatorname d {vol}_{g}\le c(n)$.
\end{theorem}

Our proof is built on the frame work established in the previous sections. 
Because $X$ is a smooth manifold and the availability of Gauss–Codazzi formula, the covering annuli need not to be convex. In fact, the good scale annuli covering Theorem \ref{t:ann_cover} is sufficient for our proof. Because the number of such covering annuli doesn't depend on the volume, the resulted estimate for the curvature integral doesn't depend on volume as well. We first give a notion for the so-called flexible $\epsilon$-frame. It is similar to the $\epsilon$-frame in Section \ref{s:annuli}, but without convexity conditions.

The following definition is similar to Definition \ref{d:cvx.ann}.  

\begin{definition}[flexible $(\epsilon,\sigma)$-good scale annulus]\label{d:cvx.ann.mfd} 
	A subset in $X$, denoted by $W_a^b(p)$, is said to be a flexible $(\epsilon,\sigma)$-good scale annulus if there exists a semi-concave function $f\colon X\to \dR$ so that the following hold. 
	\begin{enumerate}
		\renewcommand{\labelenumi}{(\roman{enumi})}
		\item (Defined by $f$) $W_a^b(p)=\cup_{a\le t\le b} H_t$, where $H_t=f^{-1}(t)$. 
		\item (Good scale) $A_{a}^{b}(p)$ is an $(\epsilon,\sigma)$-good scale annulus.
		\item (Distance-like) $(1-\epsilon)d_p(x)\le |f(x)-d_p(x)|\le (1+\epsilon)d_p(x)$ for any $x\in A_{\sigma a}^{\sigma^{-1}b}(p)$
	\end{enumerate}
Note that any $(\frac12\epsilon,\frac12\sigma)$-good scale annulus $A_a^b(p)$ is naturally a flexible $(\epsilon,\sigma)$-good scale annulus, for $f(x)=d_p(x)$. We may denote the flexible good scale annuli $W_a^b(p)$ by the couple $\{W_a^b(p), f\}$ if the corresponding defining function is involved. 
\end{definition}

We will apply Theorem \ref{t:int.scal} recursively on each level set of $f$ and their intersections. Similar to Lemma \ref{t:adj-angle}, we can also adjust these annuli so that their intersection angles are close to $\frac\pi2$. For this purpose, we have the following definition, which is similar to Definition \ref{d:frame}. 

\begin{definition}\label{d:frame.mfd}
	A sequence of flexible $(\epsilon,\epsilon)$-good scale annuli $\{W_{a_i}^{b_i}(p_i), f_i\}$, $i=1,2,\dots,m$ is called a flexible $(\epsilon,\delta)$-frame if 
	\begin{align}
		-2\delta \le \langle \nabla_x f_i, \nabla_xf_j \rangle\le -\delta
	\end{align}
	for any $x\in f_i^{-1}(t) \cap f_j^{-1}(s)$, $i\neq j$, $a_i\le t< b_i$ and $a_j\le s< b_j$. The intersection $\displaystyle M=\cap_{i=1}^m f_i^{-1}(t_i)$ is called a flexible $(\epsilon,\delta)$-corner space of $\{W_{a_i}^{b_i}(p_i), f_i\}$. 
	
	A flexible $(\epsilon,\delta)$-frame is called a flexible $\epsilon$-frame if $\delta=\delta(n,\epsilon)$. We may omit the parameters and denote a flexible frame by $\{W_i\}$ if the omitted parameters are not involved in the context.  
\end{definition}

By the same proof as Lemma \ref{t:adj-angle} and then cover the corner space and adjust the intersection angles recursively, we have the following statement, which is similar to Lemma \ref{t:cov-angle}. 

\begin{lemma}[Stratified covering by flexible good scale annuli]\label{t:cov-angle.mfd} There exists $c(n)>0$ so that the following holds for	$0<10\epsilon<\delta<\frac1{200n}$.  
	Let $\{W_i, h_i\}$, $i=1,2,\dots,m$ be a flexible $(\epsilon,\delta)$-frame and $\displaystyle M=\cap_{i=1}^m h_i^{-1}(t_i)$ be a flexible $(\epsilon,\delta)$-corner space.	
   Then there exists a flexible $(\epsilon,\delta)$-good scale annulus covering $\{W_{a_\ell}^{b_\ell}(p_\ell), f_\ell\}_{\ell=1}^N$ of $M\cap B_1$, so that 
	$$\{W_1, h_1\}, \{W_2, h_2\}, \dots, \{W_m, h_m\},  \{W_{a_\ell}^{b_\ell}(p_\ell), f_\ell\}$$ 
	is a flexible $(\epsilon, 12\delta)$-frame for any $1\le\ell\le N=c(n,\epsilon, \delta)$.
\end{lemma}

Theorem \ref{t:Pet.X} follows from the case $m=0$ in the following statement. 

 \begin{theorem}\label{l:int.corner.mfd} Let  $\{W_i,h_i\}$, $i=1,2,\dots,m$
	be a flexible $\epsilon$-frame and $\displaystyle M=\cap_{i=1}^m h_i^{-1}(t_i)$ be a flexible $\epsilon$-corner space with $\diam(M)\le 1$. Let $K_M^-(x)=\max\{-\min\{\sec_M(x)\},1\}$ and $k=\dim(M)=n-m$. Then
	\begin{align}
		\int_{M\cap B_1(q)} scal_M \operatorname d\text{vol}_M\le c(n,\epsilon)\cdot\left(\diam(M)^{k-2}+\int_{M\cap B_2(q)} K_M^-\right).
		\label{l:int.corner.mfd.e0}
	\end{align}
\end{theorem}

\begin{proof}
	We prove (\ref{l:int.corner.mfd.e0}) by induction on $\dim(M)$. The case of dimension 2 follows from Gauss-Bonnet formula. Let $\{W_{a_\ell}^{b_\ell}(p_\ell), f_\ell\}_{\ell=1}^{c(n,\epsilon)}$ be a flexible $\epsilon$-good scale annuli covering described as in Lemma \ref{t:cov-angle.mfd}. Fix $\ell$ and denote $p=p_\ell$, $a=a_\ell$ and $b=b_\ell$. Let $L_t=M\cap f_\ell^{-1}(t)$ be the next stratum of corner space. Because $X$ is smooth, we can perturb the defining functions $h_i$ and $f_\ell$ so that $M$ and $L_t$ are all smooth. For simplicity of the notation, and for the integration domain only, we will denote $M\cap B_1(q)$ by $M$, and denote  $M\cap B_2(q)$ by $M^{(2)}$. Similarly, we denote $L_t\cap B_1(q)$ by $L_t$, and denote  $L_t\cap B_2(q)$ by $L_t^{(2)}$
	
	Applying the inductive hypothesis on 
	$$\{W^1, W^2, \dots, W^{m}, W_{a}^{b}(p)\},$$ 
	we have
	\begin{align}
		&\int_{L_t}scal_{L_t} 
		\le  c(n,\epsilon)\cdot\left(\diam(L_t)^{k-3}+\int_{L_t^{(2)}} K_{L_t}^-\right),
		\label{l:int.corner.mfd.e5}
	\end{align}
where $K_{L_t}^-(x)=\max\{-\min\{\sec_{L_t}(x)\},1\}$. 
Most of the following estimates are similar to those in Petrunin's proof. To relate $scal_{L_t}(x)$ and $scal_{M}(x)$, we need the following Gauss–Codazzi formula:
\begin{align}
	scal_{L_t}=scal_{M}-2\Ric_M(u,u)+G,
	\label{l:int.corner.mfd.e7}
\end{align}
where $G(x)=\sum_{i\neq j}k_i(x)k_j(x)$ and $k_i(x)$ are the principle curvature of $L_t$ at $x$ in $M$. Let $b'\in[b,2b]$  be determined latter. Integrating (\ref{l:int.corner.mfd.e7}) over $M\cap f_\ell^{-1}[a,b']$ and by co-area formula, we have 
\begin{align}
	\int_{M\cap f_\ell^{-1}[a,b']}scal_{M}=\int_{a}^{b'}\int_{L_t}\frac{scal_{L_t}}{|\nabla_xf_\ell|}+\int_{M\cap f_\ell^{-1}[a,b']}(2\Ric_M(u,u)-G),
	\label{l:int.corner.mfd.e7.5}
\end{align}
We use the same Bochner formula as in Petrunin's proof \cite{Pet09}
\begin{align}
	\int_{M\cap f_\ell^{-1}[a,b']}\Ric_M(u,u)=\int_{M\cap f_\ell^{-1}[a,b']}G+\int_{L_{a}}H-\int_{L_{b'}}H,
	\label{l:int.corner.mfd.e8}
\end{align}
where $H(x)=\sum k_i(x)$ is the mean curvature of $L_t$ in $M$. Plugging (\ref{l:int.corner.mfd.e8}) into (\ref{l:int.corner.mfd.e7.5}), we get 
\begin{align}
	\int_{M\cap f_\ell^{-1}[a,b']}scal_{M}=\int_{a}^{b'}\int_{L_t}\frac{scal_{L_t}}{|\nabla_x f_\ell|}+\int_{M\cap f_\ell^{-1}[a,b']}G+2\int_{L_{a}}H-2\int_{L_{b'}}H,
	\label{l:int.corner.mfd.e9}
\end{align}
It turns out that the right hand side can be controlled by $K_M^-$ and the good scale on $W_{a_\ell}^{b_\ell}(p_\ell)$. 

First of all, by (\ref{l:int.corner.mfd.e7}) again,
\begin{align}
	G=scal_{L_t}-scal_{M}+2\Ric_M(u,u)\le scal_{L_t}+m^2 K_M^-
	\label{l:int.corner.mfd.e9.5}
\end{align}
By the good scale on $f_\ell^{-1}[a,b]$, we have $||\nabla_x f_\ell|-1|<10\epsilon$ and $\dsp k_i(x)\le \frac{2}{d_p(x)}$ for every $x\in L_t$. Th latter one implies that $\dsp H(x)\le \frac{2m}{d_p(x)}$.  Plugging these estimates and (\ref{l:int.corner.mfd.e9.5}) into (\ref{l:int.corner.mfd.e9}), we get 
\begin{align}
	&\int_{M\cap f_\ell^{-1}[a,b']}scal_{M}
	\notag \\
	&\le 3\int_{a}^{b'}\int_{L_t}scal_{L_t}+m^2\int_{M\cap f_\ell^{-1}[a,b']}K_M^-+c(n)\cdot a^{k-2}-2\int_{L_{b'}}H,
	\label{l:int.corner.mfd.e9.6}
\end{align}
We need to find a lower bound of $\dsp\int_{L_{b'}}H$. Because the intersection angles of the level sets are all strictly less than $\frac\pi2$, we can choose the directions of $k_i$ so that 
\begin{align}
	\frac{d(\text{vol}(L_t))}{dt}=\int_{L_t}\frac{H}{|\nabla_x f_\ell|}.
	\label{l:int.corner.mfd.e9.6.5}
\end{align}
By the good scale structure, we have that 
$$\frac{d(\text{vol}(L_t))}{dt}\ge -c(n)\cdot t^{k-2}
$$
for some $t\in[b,2b]$.
Therefore, we can choose $b'\in[b,2b]$ so that 
\begin{align}
	\int_{L_{b'}}H \ge -c(n)\cdot b^{k-2}.
	\label{l:int.corner.mfd.e9.7}
\end{align} 
Plugging this into (\ref{l:int.corner.mfd.e9.6}), we get 
\begin{align}
	\int_{M\cap f_\ell^{-1}[a,b']}scal_{M}&\le 3\int_{a}^{b'}\int_{L_t}scal_{L_t}+c(n) \cdot \int_{M\cap f_\ell^{-1}[a,b']}K_M^-+c(n)\cdot b^{k-2}
	\label{l:int.corner.mfd.e11},
\end{align}
for some $b'\in[b,2b]$. Integrating the inductive hypothesis (\ref{l:int.corner.mfd.e5}) over $t\in[a,b']\subseteq[a,2b]$, we get 
\begin{align}
	&\int_{a}^{b'}\int_{L_t}scal_{L_t}dt 
	\le  c(n,\epsilon)\cdot \left(b^{k-2}+\int_a^{b'}\int_{L_t^{(2)}} K_{L_t}^-\right).
	\label{l:int.corner.mfd.e6}
\end{align}
Therefore,
\begin{align}
	\int_{M \cap f_\ell^{-1}[a,b']}scal_{M}&\le c(n,\epsilon)\cdot \left(b^{k-2}+\int_a^{b'}\int_{L_t^{(2)}} K_{L_t}^-+\int_{M\cap f_\ell^{-1}[a,b']}K_M^-\right),
	\label{l:int.corner.mfd.e11.1}
\end{align}
for some $b'\in[b,2b]$.

It remains to find an upper bound of $\dsp\int_a^{b'}\int_{L_t^{(2)}} K_{L_t}^-$ in terms of $\dsp\int_{M^{(2)}\cap f_\ell^{-1}[a,b']} K_M^-$. By Gauss–Codazzi formula again, 
\begin{align}
	K_{L_t}^-\le K_{M}^-+2(H^-+m\cdot\max\{k_i,0\})\cdot(\max\{k_i,0\}),
	\label{l:int.corner.mfd.e6.1}
\end{align}
where $H^\pm(x)=\max\{\pm H(x),0\}$. Because $\dsp k_i(x)\le\frac{2}{d_p(x)}$ and $||\nabla_x f_\ell|-1|<10\epsilon$, we have 
\begin{align}
	\int_{L_t} H^-
	&\le 2\int_{L_t}\frac{H^-}{|\nabla_x f_\ell|}
	\notag \\
	&= 2\int_{L_t}\frac{H^+}{|\nabla_x f_\ell|}
	-2\int_{L_t}\frac{H}{|\nabla_x f_\ell|}
	\notag \\
	&\le c(n)\cdot t^{k-2}-2\cdot \frac{d(\text{vol}(L_t))}{dt}.
\end{align}
Integrating over $t$, we have
\begin{align}
	\int_{M^{(2)}\cap f_\ell^{-1}[a,b']} (H^-\cdot\max\{k_i,0\})
	\le c(n)\cdot b^{k-2}.
\end{align}
Combining this with (\ref{l:int.corner.mfd.e6.1}), we get
\begin{align}
	\int_a^{b'}\int_{L_t^{(2)}}K_{L_t}^-\le \int_{M^{(2)}\cap f_\ell^{-1}[a,b']}K_{M}^-+c(n)\cdot b^{k-2}
\end{align}
Plugging this into (\ref{l:int.corner.mfd.e11.1}), we have
\begin{align}
	&\int_{M\cap f_\ell^{-1}[a,b']}scal_{M} 
	\le  c(n,\epsilon)\cdot \left(b^{k-2}+\int_{M^{(2)}\cap f_\ell^{-1}[a,b']} K_{M}^-\right).
\end{align}

Note that $scal_M(x)\ge -m^2\cdot K_M^-(x)$ and $b'\in[b,2b]$. We have 
\begin{align}
	\int_{M\cap f_\ell^{-1}[a,b]}scal_{M} 
	&\le  2\cdot c(n,\epsilon)\cdot \left(b^{k-2}+\int_{M^{(2)}\cap f_\ell^{-1}[a,b']} K_{M}^-\right)
	\notag \\
	&\le 2\cdot c(n,\epsilon)\cdot \left(b^{k-2}+\int_{M^{(2)}\cap f_\ell^{-1}[a,2b]} K_{M}^-\right).
\end{align}
The result follows by summing up the above inequality for all $\ell=1,2,\dots,c(n,\epsilon)$. 

\end{proof}

\end{document}